\documentclass[11pt]{amsart}
\usepackage{amsmath, amsfonts, amsthm, amssymb}
\usepackage[normalem]{ulem}
\allowdisplaybreaks[3]
\usepackage{graphicx}
\usepackage[margin=1.75in, top=1.25in, bottom=1.25in]{geometry}
\usepackage{multirow}

\numberwithin{equation}{section}
\usepackage[shortlabels]{enumitem}
\setlist[itemize]{itemsep=0cm}
\setlist[enumerate]{itemsep=0cm}

\usepackage[svgnames]{xcolor}
\usepackage[colorlinks=true, allcolors=black, urlcolor=DarkBlue]{hyperref}

\usepackage{booktabs}

\theoremstyle{plain}
\newtheorem{thm}{Theorem}[section]
\newtheorem{prop}[thm]{Proposition}
\newtheorem{lem}[thm]{Lemma}
\newtheorem{cor}[thm]{Corollary}

\theoremstyle{definition}

\theoremstyle{remark}
\newtheorem{ex}[thm]{Example}
\newtheorem{remark}[thm]{Remark}

\newcommand{\defnterm}[1]{\emph{#1}}

\newcommand{\RR}{\mathbb{R}}

\newcommand{\ZZ}{\mathbb{Z}}

\newcommand{\cV}{\mathcal{V}}

\renewcommand{\bar}[1]{\overline{#1}}

\renewcommand{\tilde}[1]{\widetilde{#1}}

\renewcommand{\phi}{\varphi}

\newcommand{\D}{\Delta}

\let \1 \relax
\DeclareMathOperator{\1}{\textbf{1}}

\newcommand{\supp}{\operatorname{supp}}

\newcommand{\qqt}[1]{\qquad \text{#1} \qquad}
\newcommand{\qt}[1]{\quad \text{#1} \quad}

\newcommand{\qqand}{\qqt{and}}
\newcommand{\qand}{\qt{and}}

\newcommand{\tablespacing}{\vspace{0.2cm}}

\usepackage[foot]{amsaddr}


\title{Geometry and Laplacian on Discrete Magic Carpets}
\author{Eric Goodman$^1$}
\address{$^1$The Department of Mathematics, The University of Pennsylvania \\ David Rittenhouse Lab 3C9, 209 South 33rd Street, Philadelphia, PA 19104}
\email{ericgood@sas.upenn.edu}

\author{Chun-Yin Siu$^2$}
\address{$^2$The Department of Mathematics, The Chinese University of Hong Kong \\ 222A Lady Shaw Bldg, The Chinese University of Hong Kong, Hong Kong}
\email{alexcysiu@gmail.com}

\author{Robert S. Strichartz$^3$}
\address{$^3$The Department of Mathematics, Cornell University \\ 563 Malott Hall, Cornell University, Ithaca, NY USA 14853}
\email{str@math.cornell.edu}

\date{\today}

\begin{document}

\maketitle


\begin{abstract}
    We study several variants of the classical Sierpinski Carpet (SC) fractal. The main examples we call infinite magic carpets (IMC), obtained by taking an infinite blowup of a discrete graph approximation to SC and identifying edges using torus, Klein bottle or projective plane type identifications. We use both theoretical and experimental methods. We prove estimates for the size of metric balls that are close to optimal. We obtain numerical approximations to the spectrum of the graph Laplacian on IMC and to solutions of the associated differential equations: Laplace equation, heat equation and wave equation. We present evidence that the random walk on IMC is transient, and that the full spectral resolution of the Laplacian on IMC involves only continuous spectrum. This paper is a contribution to a general program of eliminating unwanted boundaries in the theory of analysis on fractals.
\end{abstract}

\keywords{\small{
Subject Classification: 2010 AMS Subject Classification: Primary 28A80.\\
Keywords: 
fractal, Sierpinski carpet, magic carpets, metric balls, random walk, Laplacian, Heat kernel, wave propagator, harmonic functions, Weyl ratio
}}

\section{Introduction}
The Sierpinski Carpet ($SC$) is a classical self-similar fractal generated by an iterated function system of eight contractive similarities in the plane with contraction ratio 1/3. Figure~\ref{fig:intro-SC-zoom-in} shows the first three iterations of the approximation to $SC$ obtained from the unit square by applying the contractions.
\begin{figure}[h]
	\centering
	\includegraphics[width=2cm]{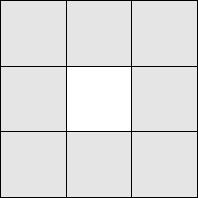}
	\hspace{0.5cm}
	\includegraphics[width=2cm]{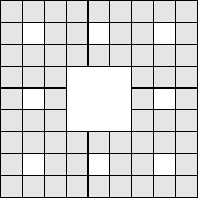}
	\hspace{0.5cm}
	\includegraphics[width=2cm]{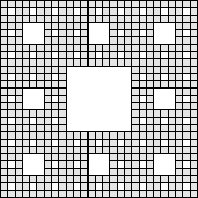}
	\caption{Approximations to $SC$}
	\label{fig:intro-SC-zoom-in}
\end{figure}
Two constructions of a Brownian motion on $SC$ were given by Barlow and Bass \cite{barlow-bass} and Kusuoka and Zhou \cite{kusuoka-zhou}, and these give rise to a symmetric, self-similar energy (Dirichlet form) and Laplacian. Only recently, in \cite{barlow-bass-kumagai-teplyaev} has it been shown that there is, up to a constant multiple, a unique symmetric self-similar Laplacian on $SC$, so the two constructions are equivalent, and also certain passages to subsequences in the constructions are unnecessary. Although the Brownian motion approach yields sub-Gaussian heat kernel estimates, it does not yield detailed information about the eigenvalues and eigenfunctions of the Laplacian (with appropriate boundary conditions). Nevertheless, several experimental approaches have yielded good numerical approximations to the spectrum \cite{begue-kalloniatis-strichartz}.

One rather vexing question concerns the nature of the analytic boundary of $SC$. (Note that there is no meaningful notion of topological boundary, since $SC$ has no interior.) This is usually taken to be the boundary of the square containing $SC$. But a glance at Figure~\ref{fig:intro-SC-zoom-in} shows that there are infinitely many line segments in $SC$ that are locally isometric to portions of this boundary, so the standard choice appears somewhat arbitrary and capricious. In an attempt to get rid of the boundary altogether, a related fractal called the Magic Carpet ($MC$) was introduced in~\cite{bello-li-strichartz} and further studied in~\cite{molitor-ott-strichartz} where potential boundary line segments are identified. Thus the opposite sides of the original square are identified with the same orientation to produce a torus. At stage $m$ of the approximation, $8^{m-1}$ vacant squares are cut into the previous approximation, and again the opposite sides are identified with the same orientation. This yields a set of $8^m$ squares (called $m$-cells) of side length $1/3^m$, and each square has exactly four neighboring squares on the top, bottom, left, and right. We call this cell graph $MC_m$. See Figure~\ref{fig:intro-mc} for an illustration of the $m = 1$ and $m = 2$ approximations. Of course these approximations do not embed in the plane. They should be thought of as surfaces that are flat except for singular points at the corners of identified edges.
\begin{figure}
	\centering
	\includegraphics[width=3cm]{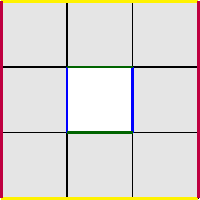}
	\hspace{0.5cm}
	\includegraphics[width=3cm]{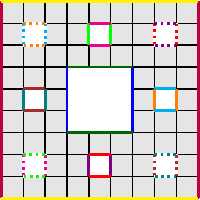}
	\caption{Approximations to $MC$}
	\label{fig:intro-mc}
\end{figure}

In this paper we will denote the above magic carpet $MCT$ to indicate that we have made torus-type identifications of edges. We will also consider $MCK$ and $MCP$, where we make Klein bottle or projective plane identifications, as shown in Figure~\ref{fig:ident-types}. (Note that we could make horizontal or vertical Klein bottle identifications, denoted $K_H$ and $K_V$, but in this uniform case the two fractals are isometric.) Later, in Section~\ref{sec: homogeneous}, we will consider still other fractals of homogeneous type, where we make one of the four identification types---$T, P, K_H$, or $K_V$---on each level.
\begin{figure}
	\centering
	\includegraphics{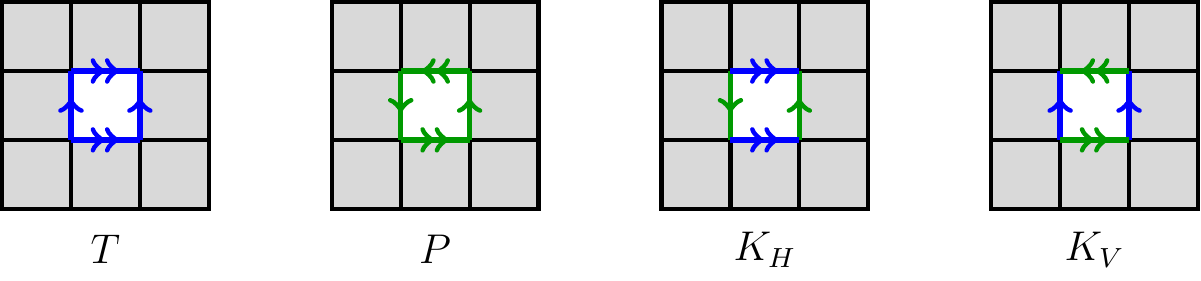}
	\caption{Identification Types}
	\label{fig:ident-types}
\end{figure}

We will also consider infinite graphs obtained by blowing up the approximations. In other words, take the level-$m$ approximation to $MC$ and regard each $m$-cell as the vertex of a graph $MC_m$, and then take the appropriate limit as $m \to \infty$ to obtain the infinite magic carpet graph $IMC$. More precisely, we write $\widetilde{MC}_m$ to be the cell graph of $MC_m$ without identifying the boundary of the outer square. Then we have embeddings
\[ \widetilde{MC}_1 \subseteq \widetilde{MC}_2 \subseteq \cdots \]
and $IMC$ is simply the union. Note that there are many different embedding choices (eight on each level) so $IMC$ is not unique. In the generic case where there is no boundary it turns out not to matter for what we do here. In the future there may be questions that have different answers depending on these choices. Note that $IMC$ is an infinite, 4-regular graph.

We begin by investigating the geometry of the graph $IMC$. For each fixed vertex, $x$, let $B(x, r)$ denote the ball of radius $r$ in the geodesic graph metric. What is the cardinality $\# B(x, r)$ as $r \to \infty$? In Section~\ref{sec: cardinality} we prove that $\# B(x, r) = O\big(r^3\big)$. More precisely, for $IMCT$ and $IMCK$ we have upper and lower bounds of a constant times $r^3$. For $IMCP$ we obtain the same type of upper bound, but our best lower bound is $c \left( r / \log r \right)^3$. We also undertake some numerical experiments that suggest $\lim_{r \to \infty} \# B(x, r) / r^3$ exists for torus and Klein bottle identifications. If this is true for one $x$ then the same limit holds for all $x$.

In Section~\ref{sec: random walks} we examine random walks on $IMC$. The main question is to decide whether these are recurrent or transient. We gather numerical data on two fronts:
	(i)~we compute the percentage of walks that return to the starting point as the length of the walk varies, and
	(ii)~we compute the effective resistance from a fixed point to the boundary of a large square (which should tend to infinity if the walk is recurrent and remain bounded if the walk is transient, \textit{cf.}~\cite{doyle-snell}, \cite{woess}). Neither test is decisive, but we present the data.

In Section~\ref{sec: spectrum on IMC} we study the spectrum of the graph Laplacian, $-\D u(x) = u(x) - \frac{1}{4} \sum_{y \sim x} u(y)$, on $IMC$. Here the main question is whether the spectrum is pure point, continuous, or a mixture of the two. In order to explore the possibility of square-summable eigenfunctions (point spectrum), we numerically solve the Dirichlet problem $-\D u = \lambda u$ inside $\tilde{MC}_m$ for large $m$ with $u \equiv 0$ on the boundary. If $u$ were a square-summable eigenfunction, then eventually it would be very close to zero on a large neighborhood of the boundary of $\tilde{MC}_m$, and so it would be very close to one of the Dirichlet eigenfunctions, and this Dirichlet eigenfunction would vanish rapidly as you approach the boundary. We do not see any such Dirichlet eigenfunctions, so this provides strong numerical evidence that the spectrum is continuous.

This also tells us that the Dirichlet spectrum is unrelated to the spectrum of $IMC$. Nevertheless, as we will see later, the data is not completely useless, as it allows us to study the heat kernel.

In Section~\ref{sec: heat kernel} we study the heat kernel on $IMC$ by approximating it by the Dirichlet heat kernel on $\widetilde{MC}_m$, given by
\begin{align}
	H_t^{(m)}(x, y) = \sum_i e^{-\lambda_i t} u_i(x) u_i(y),
\end{align}
where $\{ u_i \}$ is an orthonormal basis of Dirichlet eigenfunctions on $\widetilde{MC}_m$ with eigenvalues $\lambda_i$ (all of these depend on $m$, of course, but we prefer not to burden the notation to make this explicit). Since the heat kernel is highly localized, if $x$ and $y$ are not too close to the boundary the choice of Dirichlet boundary conditions should have only a negligible effect on the heat kernel. The two fundamental questions here concern the on-diagonal behavior and the off-diagonal behavior. For $x = y$, we ask if there is some power law behavior
\begin{align}
	\label{eq:heat-kernel-simple-power-law}
	H_t(x, x) = O\left(t^{-\beta} \right) \qqt{as} t \to \infty
\end{align}
for some $\beta$. Surprisingly, our data suggest that instead of~\eqref{eq:heat-kernel-simple-power-law}, it is more likely that
\begin{align}
	H_t(x, x) = O\left(t^{-\beta(x)} \right) \qqt{as} t \to \infty
\end{align}
where $\beta(x)$ depends on $x$. We present data to support this. The $\beta(x)$ values for $x$ far from the boundary satisfy $\beta(x) > 1$. For the off-diagonal behavior, we fix $y$ and $t$ and examine the decay of $H_t(x,y)$ as $x$ moves away from $y$. We present two types of data:
	(i)~the graph of $H_t(x,y)$ as a function of $x$ as $x$ varies along a line segment in $IMC$ containing $y$, and
	(ii)~a scatter plot of the values of $H_t(x,y)$, where $x$ varies over all points of distance $r$ to $y$, with $r$ varying.
Because these values of the heat kernel are close to zero, the graphs of $\log H_t(x, y)$ reveal more information.

In Section~\ref{sec: wave} we study the wave propagator
\begin{align}
	\label{eq:wave-propagator}
	W_t^{(m)}(x,y) = \sum_i \frac{\sin \lambda_i t}{\lambda_i} u_i(x) u_i(y)
\end{align}
that provides the solution
\[ u(x, t) = \sum_y \left( W_t^{(m)}(x,y) \, g(y) + \left.\frac{d W_t^{(m)}}{dt}\right|_{(x,y)} f(y) \right) \]
to the wave equation
\[ \frac{\partial^2 u}{\partial t^2} = \D^{(m)} u \]
(where $\D^{(m)}$ denotes the Laplacian on $\tilde{MC}_m$) with initial conditions
\[ u(x, 0) = f(x) \qqand \frac{\partial u}{\partial t}(x, 0) = g(x). \]
Although the wave propagator is not as localized as the heat kernel, it is still relatively localized for small $t$, so that our approximations give interesting information about wave propagation on the $IMC$ graph.

In Section~\ref{sec: harmonic} we study harmonic functions and the analog of the Poisson kernel on $\tilde{MC}_m$ obtained by setting boundary values $P(x, y) = \delta_{xy}$ for $y$ a fixed point, $x$ a variable point on the boundary, and $\D_x^{(m)} P(x,y) = 0$ for $x$ in the interior. The Poisson kernel decays as $x$ moves away from $y$, as seen in the graphs and scatter plots, but not as rapidly as the heat kernel. An interesting question that we have not been able to deal with is whether or not there is an analog of Liouville's Theorem: are bounded harmonic functions necessarily constant?

In Section~\ref{sec: uniform} we return to the finite fractal setting. By considering $MC_m$ as consisting of cells of size $1/3^m$ we obtain approximations to the original magic carpet $MCT$. We also do the same for the other identification types to obtain $MCP$ and $MCK$. We find convergence of eigenvalues as we vary $m$ with respect to a Laplacian renormalization factor $R$ that is slightly larger than~6 (it varies slightly with the identification type), and we can observe the refinement of eigenfunctions as $m$ increases by using an averaging process to pass from functions on level $m+1$ to functions on level $m$. We also see miniaturization of eigenfunctions that produce periodic eigenfunctions that are translates on copies of $V_m$ of a certain size. These may also be interpreted as periodic eigenfunctions on $IMC$, analogous to the functions $\cos \lambda k$ (for rational $\lambda$) on the integers.

We then compute the eigenvalue counting function,
\[ N(t) = \# \{ \lambda_i \; : \; \lambda_i \le t \}, \]
and the Weyl ratio,
\[ W(t) = \frac{N(t)}{t^\alpha} \qqt{for} \alpha = \frac{\log 8}{\log R}. \]
(Here, 1/8 is the renormalization factor for the standard measure on $MC$.) Note that $\alpha > 1$. This is quite different than for $SC$ \cite{begue-kalloniatis-strichartz}. The three different identification types yield qualitatively different Weyl ratio graphs. We do not see any periodicity in the log-log plots of $W(t)$.

In Section~\ref{sec: homogeneous} we continue in the finite fractal setting, but we allow the identification type to change from level to level. We call these \defnterm{homogeneous} magic carpets. Now there are actually four possibilities since the two Klein bottle identifications---horizontal, $K_H$, and vertical, $K_V$---are not always interchangeable if used on different levels. Thus we write $T, P, K_H, K_V, T$ for $m=4$ identifications, where the first $T$ means identify the outer boundary by $T$, the sides of the single large vacant square by $P$, the next eight largest vacant squares by $K_H$, and so on. We note that the final identification on the smallest level does not influence the spectrum, but of course it would lead to different fractals if we continue in the limit. This leads on level~4 to $256 = 4^4$ possible spectra, some of which are equivalent. Here we look at a representative sample, including all sixteen involving just $T$ and $P$. All cases are included in the website~\cite{website}. The question we would like to answer is the following: is there a qualitative procedure to use the Weyl ratio to deduce the particular identifications chosen? An idea proposed in \cite{drenning-strichartz} called spectral segmentation is that different segments of the spectrum relate to the identification types at different levels. While we are unable to support this hypothesis in full generality, we are able to see the signature of the first $k$ choices in the beginning segments of increasing length for $k = 1, 2, 3$.

In Section \ref{sec: spectral resolution} we discuss the experimental evidence that the spectral resolution of the Laplacian on $IMC$ is purely continuous.

Many of the ideas discussed in this paper are still conjectural, but we present a lot of data in figures and tables to support these conjectures experimentally. The website \cite{website} contains much more data. For the general theory of Laplacian on fractals the reader may consult the books \cite{Barlow98}, \cite{Kigami01} and \cite{strichartz-book}.


\section{Cardinality}
\label{sec: cardinality}

The distance between cells $x$ and $y$, $d(x,y)$, in the identified magic carpet blowup, $IMC$, is defined to be the length of the shortest path through cells from $x$ to $y$. Recall that we denote the ball of radius $r$ around a cell $x$ by
\[ B(x, r) = \{ y \; : \; \text{$y$ is a cell of $IMC$ with } d(x, y) < r \}, \]
whose cardinality will be denoted $\# B(x, r)$. The level-$m$ approximation to $IMC$, including the inner identifications, is denoted by $V_m$. An identification that is done at level $m + 1$ will be called an \defnterm{$m$-stitch}. With this notation, $V_{m+1}$ contains eight copies of $V_m$ and one $m$-stitch.


\subsection{The Lower Bound for Torus and Klein Identifications}
\label{sec:cardinality-torus-klein}

In this section we derive the $r^3$ lower bound for torus and Klein identifications. In each case, the argument is the same. Projective identifications are deferred to Section~\ref{sec:cardinality-projective-lower-bound}.

Consider first torus identifications. The left image of Figure~\ref{fig: corner-to-corner paths} shows a path of length~10 across $V_2$. To find a path across $V_3$, we may duplicate the path across $V_2$ and add six steps through the five red cells. We obtain the path of length~26 in the right image. To find a path across any higher $V_{m+1}$, we may repeat this process: duplicate the path across $V_m$ and add six steps through cells positioned as the red cells are. Letting $r_m$ be the length of this path across $V_m$, these lengths satisfy
\[ r_{m+1} = 2 r_m + 6 \qqt{with} r_2 = 10, \]
which has solution
\begin{align}
	\label{eq: r_m closed form}
	r_m = 2^{m+2} - 6.
\end{align}

\begin{figure}[h]
	\centering
	
	\begin{minipage}{1.5cm}
		\centering
		\includegraphics[width=1.5cm]{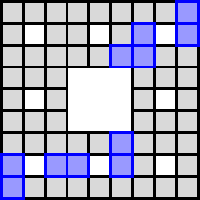} \\
		$V_2$
	\end{minipage}
	\hspace{1cm}
	\begin{minipage}{4.5cm}
		\centering
		\includegraphics[width=4.5cm]{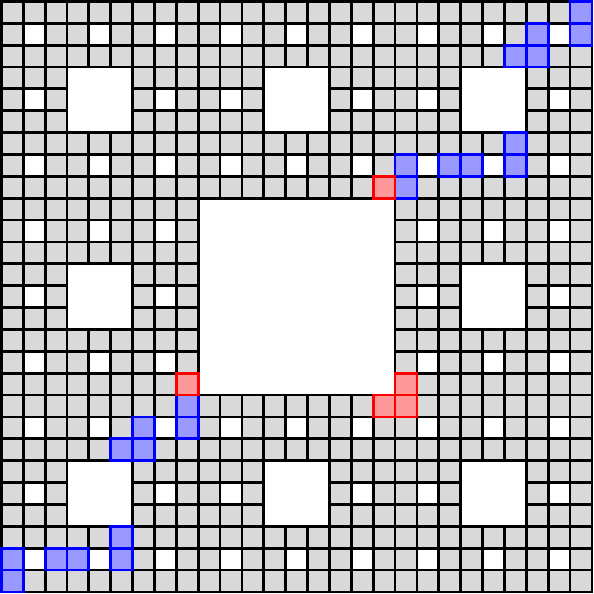} \\
		$V_3$
	\end{minipage}
	
	\caption{Paths of lengths~$r_2 = 10$ and~$r_3 = 26$ with torus identifications.}
	\label{fig: corner-to-corner paths}
\end{figure}

\begin{lem}
	\label{lem: can reach entire V_m in r_m steps}
	Given $r_m$ steps, a path from a corner cell of $V_m$ can reach any other cell in $V_m$.
\end{lem}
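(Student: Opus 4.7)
The plan is to prove this by induction on $m$, directly mirroring the recursive path construction that gave $r_{m+1} = 2 r_m + 6$ with $r_2 = 10$. The base case $m = 2$ would be a finite verification: from a corner of $V_2$, a breadth-first search through the 64 cells---using the eight level-1 torus identifications on the $1 \times 1$ vacancies together with the single $1$-stitch on the central $3 \times 3$ vacancy---confirms that every cell is reachable in at most 10 steps.

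For the inductive step, fix a corner $c_0$ of $V_{m+1}$, which without loss of generality is the outer corner of a corner sub-$V_m$ copy $A$ (say the top-left). For any target cell $t$, I locate the sub-$V_m$ copy $B$ containing $t$ and split into two cases. When $B = A$, the inductive hypothesis applied inside $A$ gives $d(c_0, t) \leq r_m \leq r_{m+1}$ and we are done. Otherwise, I would assemble a path of total length at most $2 r_m + 6 = r_{m+1}$ by concatenating three pieces: first, at most $r_m$ steps from $c_0$ through $A$ to the corner of $A$ adjacent to the central vacant square of $V_{m+1}$ (its ``inner'' corner), by the inductive hypothesis; second, a bridge of length at most $6$ from that inner corner to some corner $c_2$ of $B$, built from cell-to-cell adjacencies and the $m$-stitch of the central vacancy; third, at most $r_m$ more steps from $c_2$ to $t$ inside $B$, again by induction.

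The crux is constructing the bridge in the middle step so that it is short enough for every possible $B$. When $B \in \{TM, ML\}$ (edge-sharing with $A$) a single step suffices; when $B \in \{BM, MR\}$ two steps suffice, one adjacency plus one edge of the $m$-stitch; when $B \in \{TR, BL\}$ three steps suffice by concatenating one stitch edge with two adjacencies along the ring; and when $B = BR$ (diagonally opposite to $A$ across the central vacancy) the full six steps are needed, realized by exactly the sequence of the five red cells in Figure~\ref{fig: corner-to-corner paths}. The diagonal case is the main obstacle: a naive row-by-row or column-by-column traversal would cost $\Theta(3^m)$ steps, so the 6-step bridge must absorb both the row-distance and the column-distance of roughly $3^m$ using a careful combination of the two orthogonal stitch edges together with only a handful of ordinary adjacencies around the central vacancy. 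Once this bridge is verified in Figure~\ref{fig: corner-to-corner paths} for $m+1 = 3$ and placed in the analogous position around the central vacancy of $V_{m+1}$ for general $m$, the induction closes. The Klein-bottle case is handled by replacing the torus $m$-stitch with its Klein analog and re-checking the bridges, and the projective-plane case is deferred to Section~\ref{sec:cardinality-projective-lower-bound}.
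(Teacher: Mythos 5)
Your proof is correct and follows essentially the same route as the paper's: induction with the base case $V_2$ checked by exhaustive search, then a first leg of at most $r_m$ steps (inductive hypothesis) from the outer corner to the inner corner at the $m$-stitch, a bridge of at most six steps across the stitch to a corner of the target copy, and a final leg of at most $r_m$ steps by the inductive hypothesis, giving $2r_m + 6 = r_{m+1}$. Your explicit $1/2/3/6$-step case analysis of the bridge is just a spelled-out version of the paper's appeal to the red cells of Figure~\ref{fig: corner-to-corner paths} and their diagonal reflections, and your brief treatment of the Klein case matches the paper's equally brief remark (with its nine-step base case).
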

\begin{proof}
First, consider torus identifications. We induct on $m$ with base case $V_2$. We check this case by hand on the left of Figure~\ref{fig:distances-in-V_2}; indeed, all cells may be reached within $r_2$ steps.

\begin{figure}[h]
	\centering
	\makebox[\linewidth]{
		\includegraphics[width=5cm]{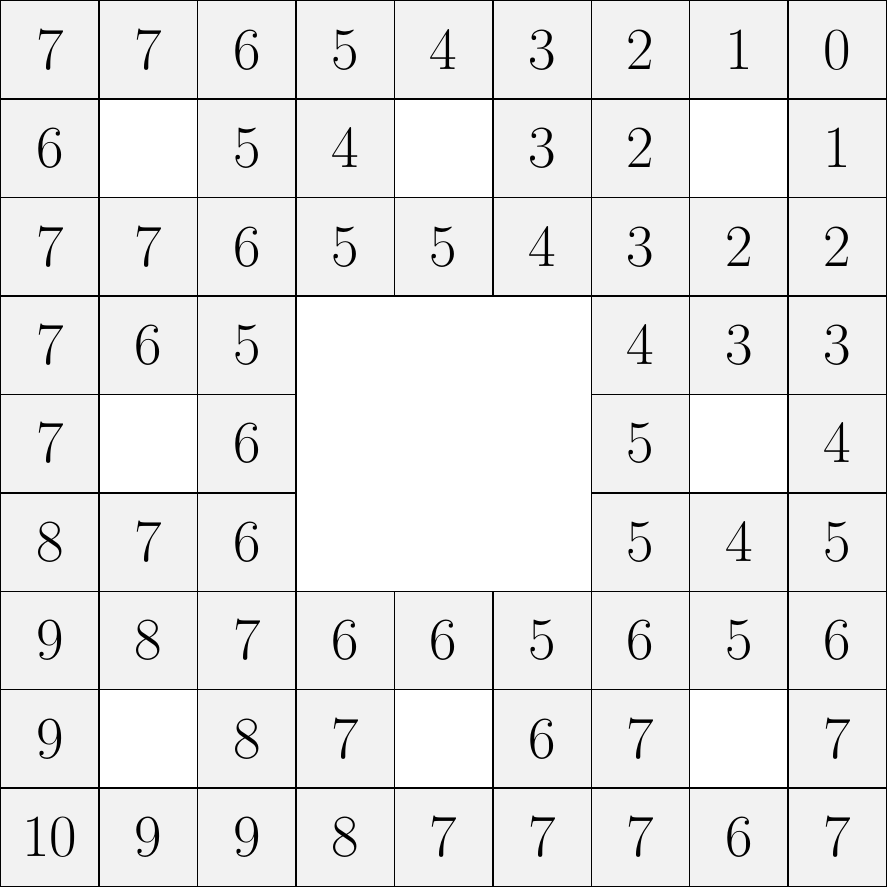}
		\hspace{0.5cm}
		\includegraphics[width=5cm]{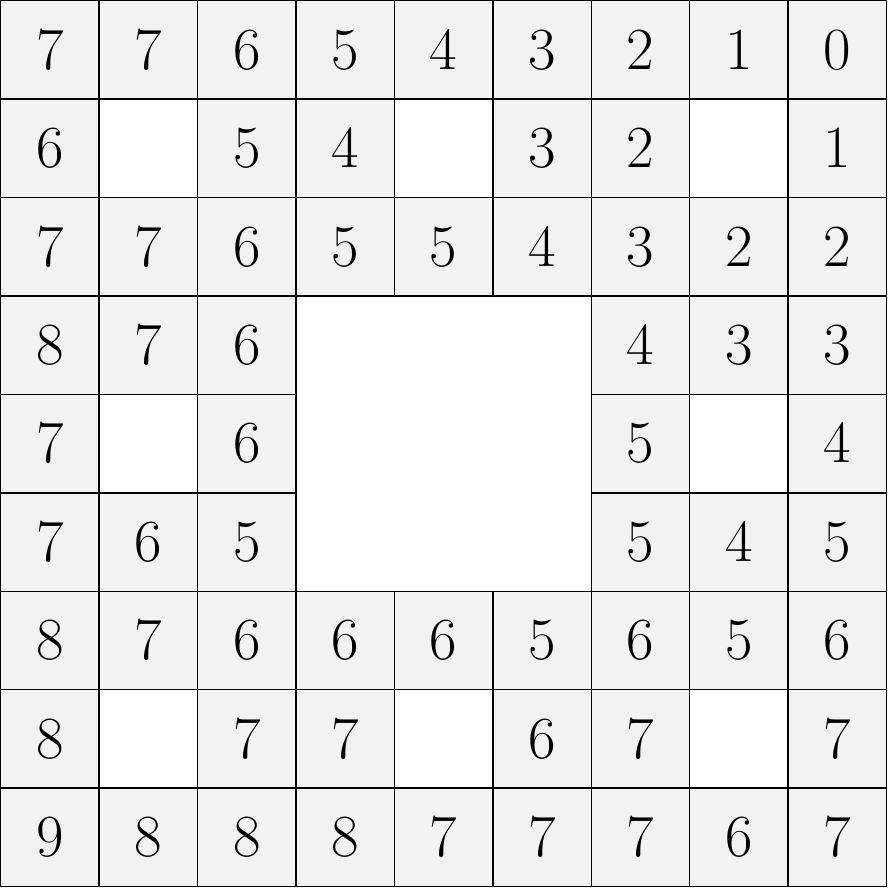}
	}
	\caption{A number in a cell indicates, among all paths entirely within $V_2$, the length of a shortest path to the top right cell. On the left, we use torus identifications, and on the right, Klein horizontal.}
	\label{fig:distances-in-V_2}
\end{figure}

Assume that, from a corner in $V_m$, any cell in $V_m$ can be reached within $r_m$ steps. Consider a corner cell $x$ of $V_{m+1}$. Starting at $x$, within $r_m$ steps we can reach a cell at the corner of the $m$-stitch of $V_{m+1}$. From here, we can reach a corner of each of the other seven copies of $V_m$ within six steps. (These six steps are the steps through some of the red cells in Figure~\ref{fig: corner-to-corner paths}, or through their mirror image after a diagonal reflection.) Applying the inductive hypothesis again, we can reach any cell in each of these other copies of $V_m$ within an additional $r_m$ steps. Adding these up, we can go from $x$ to any other cell in $V_{m+1}$ within $2 r_m + 6 = r_{m+1}$ steps.

For Klein bottle identifications, each cell of $V_2$ can be reached from the corner cell within nine steps. Again this can be verified by hand, as in Figure~\ref{fig:distances-in-V_2}. Finally in $V_{m+1}$, any two copies of $V_m$ can be joined by a path across the $m$-stitch. Such a path can be found with at most six steps, like the red path in the torus example of Figure~\ref{fig: corner-to-corner paths}.
\end{proof}

\begin{lem}
	\label{lem:lower-bound-torus-klein}
	For every cell $x$, we have $\# B(x, 2 r_m + 1) \ge 8^m$.
\end{lem}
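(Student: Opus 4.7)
The plan is to locate a copy of $V_m$ containing $x$, pick any one of its corners as a ``hub,'' and use Lemma~\ref{lem: can reach entire V_m in r_m steps} twice (once to reach the hub from $x$, once to spread out from the hub to the rest of the copy). First I would observe that every cell $x$ of $IMC$ lies in some copy of $V_m$: since $IMC$ is the ascending union $\bigcup_M \widetilde{MC}_M$ and $V_M$ decomposes into $8^{M-m}$ sub-copies of $V_m$ for each $M \ge m$, we can take any $M$ large enough that $\widetilde{MC}_M$ contains $x$ and then select the sub-copy of $V_m$ that contains $x$.

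Next I would fix any corner cell $c$ of this copy of $V_m$. By Lemma~\ref{lem: can reach entire V_m in r_m steps}, starting at $c$ we can reach $x$ within $r_m$ steps via a path lying entirely in this copy of $V_m$; since graph distance is symmetric, this yields $d(x,c) \le r_m$ (as a bound in $IMC$, since paths in a subgraph are paths in $IMC$). The same lemma gives $d(c,y) \le r_m$ for every cell $y$ in this copy, so by the triangle inequality,
\[
d(x,y) \;\le\; d(x,c) + d(c,y) \;\le\; 2 r_m \;<\; 2 r_m + 1,
\]
which places $y$ in $B(x, 2 r_m + 1)$ (recall that the ball is defined with strict inequality). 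Consequently the entire copy of $V_m$, which contains $8^m$ cells, sits inside $B(x, 2 r_m + 1)$, yielding $\# B(x, 2 r_m + 1) \ge 8^m$.

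I do not foresee a real obstacle here: the substantive combinatorial work was already absorbed into Lemma~\ref{lem: can reach entire V_m in r_m steps}, and the only subtlety to flag is that distances are being measured in $IMC$ while the lemma produces paths inside a single $V_m$---but this discrepancy only goes in our favor, since it can only shorten distances in $IMC$ compared to distances measured within a sub-copy.
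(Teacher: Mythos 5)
Your proof is correct and follows essentially the same route as the paper: travel from $x$ to a corner of the copy of $V_m$ containing $x$ in at most $r_m$ steps (using the symmetry of the distance together with Lemma~\ref{lem: can reach entire V_m in r_m steps}), then reach any other cell of that copy in at most $r_m$ more steps, so the whole copy lies in $B(x, 2r_m+1)$ and has $8^m$ cells. The only difference is that you spell out the existence of a copy of $V_m$ containing $x$ and the symmetry step, which the paper leaves implicit.
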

\begin{proof}
By Lemma~\ref{lem: can reach entire V_m in r_m steps}, we can travel from $x$ to a corner of the copy of $V_m$ containing $x$ within $r_m$ steps, and from there we can travel to any other cell in the same copy of $V_m$ with an additional $r_m$ steps. Hence $B(x, 2 r_m + 1) \supseteq V_m$, and so
\[ \# B(x, 2 r_m + 1) \ge 8^m. \qedhere \]
\end{proof}


\subsection{A Lower Bound for Projective Identifications}
\label{sec:cardinality-projective-lower-bound}

Let us now consider the case of projective identifications. We obtain a looser bound. The issue is that with projective identifications, an $m$-stitch does not quickly connect all copies of $V_m$, and so we must use longer paths.

Consider a sequence $R_m$ satisfying
\begin{align}
	\label{eq:proj-lower:R_m-defn}
	R_0 = 0
	\qand
	R_{m+1} = \max \Big\{
		2 R_m + 3 , \quad
		2 R_m + 2^m + 1
	\Big\}.
\end{align}
\begin{lem}
	\label{lem:proj-lower:r_m-steps-throughout-V_m}
	Given $R_m$ steps, a path from any cell of $V_m$ can reach any other cell in $V_m$.
\end{lem}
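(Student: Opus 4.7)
The plan is to prove this by induction on $m$. The base case $m = 0$ is trivial since $V_0$ is a single cell and $R_0 = 0$. For the inductive step I would fix $x, y \in V_{m+1}$ and let $A, B$ denote the copies of $V_m$ (arranged in the $3 \times 3$ grid around the central hole carrying the projective $m$-stitch) containing $x$ and $y$. If $A = B$ the inductive hypothesis immediately gives $d(x,y) \leq R_m \leq R_{m+1}$, so I focus on the case $A \neq B$.

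The strategy is to choose cells $a \in A$ and $b \in B$ so that, by the triangle inequality,
\[ d(x,y) \leq d(x,a) + d(a,b) + d(b,y) \leq R_m + d(a,b) + R_m, \]
where each of $d(x,a)$ and $d(b,y)$ is at most $R_m$ by the inductive hypothesis. It therefore suffices to produce $a, b$ with $d(a,b) \leq \max\{3, 2^m + 1\}$, and I would split into cases according to the relative position of $A$ and $B$ in the $3 \times 3$ arrangement. When $A$ and $B$ share a grid edge, or they form one of the two stitch-linked pairs of opposite middle-edge copies, a single graph edge connects a boundary cell of $A$ to a suitable cell of $B$, giving $d(a, b) = 1$. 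For a pair of diagonally opposite corner copies such as $TL$ and $BR$, I would step from the bottom-right corner of $TL$ into $ML$ to reach the stitch cell $L_1$, apply the projective stitch to land on its mirror $R_{3^m}$ in $MR$, and then step into $BR$; this uses three edges and yields $d(a,b) = 3$. All of these give the first bound $d(x, y) \leq 2R_m + 3$.

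The remaining cases are those such as two corner copies in the same row, or a corner copy paired with a non-adjacent edge copy. Here the projective reversal prevents the short mirror routing of the diagonal case: after applying the $m$-stitch one lands at a point shifted along the hole boundary relative to where one needs to enter $B$, and must slide along that boundary to correct for this. The main obstacle will be to show that the slide costs at most $2^m$ steps even though a naive walk along the side of the hole would take $3^m$. My plan is to use the inner stitches of the intermediate copy lying along the hole boundary as recursive shortcuts, building, by a separate induction inside $V_m$, a distinguished path of length at most $2^m$ between designated boundary cells of a copy of $V_m$; this is analogous in spirit to the doubling path in the torus argument of Section~\ref{sec:cardinality-torus-klein} but adapted to the projective mirroring. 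Combining this slide of length at most $2^m$ with the single stitch step gives $d(a,b) \leq 2^m + 1$ in every remaining case, so that $d(x,y) \leq 2R_m + 2^m + 1 \leq R_{m+1}$ and the induction is complete.
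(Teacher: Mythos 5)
Your overall skeleton is essentially the paper's: induction on $m$, the triangle inequality $d(x,y)\le R_m+d(a,b)+R_m$ with $a,b$ chosen freely in the copies containing $x$ and $y$, a case split in which most pairs of copies of $V_m$ are joined in at most $3$ steps, and an exceptional case costing about $2^m$. Your $3$-step route between diagonally opposite corner copies (step into the adjacent edge copy, cross the projective stitch to the mirrored cell, step out into the far corner copy) is correct, and matches the budget $2R_m+3$.

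The gap is in the hard case. You propose to reach the far copy by taking the $m$-stitch and then \emph{sliding along the hole boundary} inside the intermediate copy, claiming the slide costs at most $2^m$ because that copy's inner stitches act as recursive shortcuts. This claim fails: the cells of a copy of $V_m$ that border the central hole form one of its outermost rows/columns, which contains $3^m$ cells and meets none of that copy's holes, while its inner stitches sit in the middle thirds, so detouring to them costs more than they save. Already in $V_2$ the distance between the two corner cells on one side is $3^2-1=8$, not at most $4$; within a copy the slide costs $3^m-1$ in general. The path of length $2^m-1$ that does exist runs through the \emph{middle} of a copy, where it meets a stitch at every scale (Figure~\ref{fig:proj-lower:cross-V_m-basic-line}; compare Lemma~\ref{lem:cannot-cross-V_m-quickly}), and this is how the paper treats the hard pairs: to join, say, the two top corner copies, cross the top edge copy along its central row, entering and exiting through adjacent cells of the two corner copies, for a total of $(2^m-1)+2=2^m+1$ --- no stitch-then-slide at all. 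Your argument is repaired by replacing the slide with this crossing. A harmless side issue: the pairs ``corner copy versus non-adjacent edge copy'' that you list as hard are in fact joined in $2$ steps (corner cell, then the hole-corner cell of the adjacent edge copy, then across the stitch), so the genuinely expensive pairs are only the two corner copies sharing a row or column; since you apply the larger bound there anyway, this does not affect the recursion $R_{m+1}=\max\{2R_m+3,\,2R_m+2^m+1\}$.
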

\begin{proof}
It suffices to prove the induction step, as the base case $m = 0$ is trivial. Suppose the path goes from cell $x$ to cell $y$. First suppose $x$ is in the top copy of $V_m$, as highlighted in Figure~\ref{fig:proj-lower:abcde}(a). Inductively from $x$, it takes at most $R_m$ steps to reach a corner cell by the $m$-stich, \textit{e.g.}, a blue or green cell in Figure~\ref{fig:proj-lower:abcde}(b). From there, it can reach any other copy of $V_m$ within three steps, and then it takes at most another $R_m$ steps to reach $y$.

\begin{figure}[h]
	\centering
	\makebox[\linewidth]{
	\begin{minipage}{2cm}
		\centering
		\includegraphics{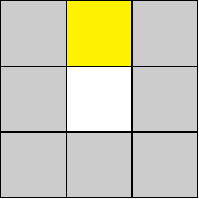} \\*
		(a)
	\end{minipage}
	\hspace{0.5cm}
	\begin{minipage}{2cm}
		\centering
		\includegraphics{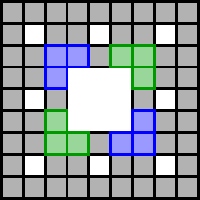} \\*
		(b)
	\end{minipage}
	\hspace{0.5cm}
	\begin{minipage}{2cm}
		\centering
		\includegraphics{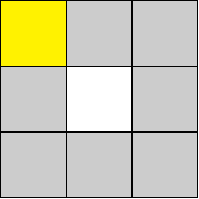} \\*
		(c)
	\end{minipage}
	\hspace{0.5cm}
	\begin{minipage}{2cm}
		\centering
		\includegraphics{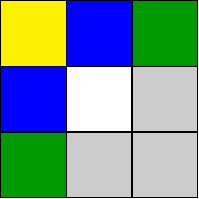} \\*
		(d)
	\end{minipage}
	\hspace{0.5cm}
	\begin{minipage}{2cm}
		\centering
		\includegraphics{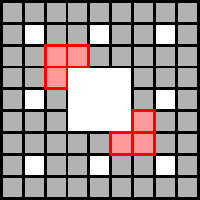} \\*
		(e)
	\end{minipage}
	}
	\caption{}
	\label{fig:proj-lower:abcde}
\end{figure}

Next suppose $x$ is in the corner copy, as in Figure~\ref{fig:proj-lower:abcde}(c). If $y$ is not in one of the green copies of $V_m$ in Figure~\ref{fig:proj-lower:abcde}(d), a path from $x$ can reach one of the red cells of Figure~\ref{fig:proj-lower:abcde}(e) using $R_m$ or fewer steps. Thus $x$ and $y$ are at most $2 R_m + 3$ steps apart.

If $y$ is in a green copy, the path can reach $y$ by traversing one of the blue copies shown in Figure~\ref{fig:proj-lower:abcde}(d); this can be achieved in $2^m - 1$ steps straight across the blue $V_m$, as in Figure~\ref{fig:proj-lower:cross-V_m-basic-line}.
\begin{figure}
	\centering
	\includegraphics{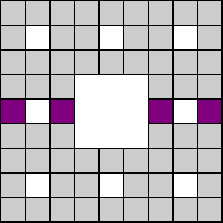}
	\caption{A path across $V_m$ of length $2^m - 1$.}
	\label{fig:proj-lower:cross-V_m-basic-line}
\end{figure}
In addition to the (upper bound of) $2 r_m$ steps needed to get from $x$ to the outer boundary of a blue copy and to get from the outer boundary of a blue copy to $y$, it takes one step to enter the blue copy and one step to leave the blue copy.
Hence the path has at most $2 R_m + 2^m - 1 + 2$ steps, and the result follows.
\end{proof}

\begin{lem}
	When $m \geq m_0 \geq 2$,
	\[ R_m \le \frac{1}{2} \left( m \cdot 2^m \right) + \left( \frac{R_{m_0} + 1}{2^{m_0}} - \frac{m_0}{2} \right) \left( 2^m \right) - 1 . \]
\end{lem}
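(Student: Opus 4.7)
The plan is to observe that for $m \geq 2$, the recurrence~\eqref{eq:proj-lower:R_m-defn} simplifies considerably, then to linearize it by dividing through by $2^{m+1}$ and telescope.

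First I would note that whenever $m \geq 2$, we have $2^m + 1 \geq 5 > 3$, so the maximum in the definition of $R_{m+1}$ is always attained by the second term:
\[ R_{m+1} = 2 R_m + 2^m + 1 \qquad (m \geq 2). \]
This already reduces the problem to analyzing a first-order linear recurrence.

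Next I would divide both sides by $2^{m+1}$ to obtain
\[ \frac{R_{m+1}}{2^{m+1}} \;=\; \frac{R_m}{2^m} + \frac{1}{2} + \frac{1}{2^{m+1}}. \]
Writing $a_m := R_m / 2^m$, this becomes $a_{m+1} - a_m = \tfrac12 + 2^{-(m+1)}$, which telescopes cleanly. Summing from $k = m_0$ to $k = m-1$ (valid since each index is $\geq m_0 \geq 2$, so the simplified recurrence applies at every step) gives
\[ a_m - a_{m_0} \;=\; \frac{m - m_0}{2} + \sum_{k=m_0}^{m-1} \frac{1}{2^{k+1}} \;=\; \frac{m - m_0}{2} + \frac{1}{2^{m_0}} - \frac{1}{2^m}. \]

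Finally I would multiply through by $2^m$ and regroup to match the form in the statement:
\[ R_m \;=\; \frac{R_{m_0}}{2^{m_0}}\,2^m + \frac{m - m_0}{2}\,2^m + \frac{2^m}{2^{m_0}} - 1 \;=\; \frac{1}{2}(m \cdot 2^m) + \left(\frac{R_{m_0}+1}{2^{m_0}} - \frac{m_0}{2}\right) 2^m - 1, \]
which gives equality (hence, in particular, the claimed inequality). The only real content is the observation that the $\max$ stabilizes at $m \geq 2$; everything after that is a routine closed-form solution of a linear recurrence, so there is no genuine obstacle, merely the bookkeeping of the regrouping at the end.
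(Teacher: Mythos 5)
Your proof is correct and takes essentially the same route as the paper: both observe that for the relevant indices the maximum in~\eqref{eq:proj-lower:R_m-defn} is attained by the second term, reducing everything to the linear recurrence $R_{k+1} = 2R_k + 2^k + 1$, which the paper unrolls by induction on the number of steps while you solve it by telescoping $R_k/2^k$ --- the same computation in a different presentation. Your version even yields equality, which is marginally sharper than the stated inequality.
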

\begin{proof}
Note that when $m \geq 2$,
\[ R_m = 2 R_{m-1} + 2^{m-1} + 1. \]
Therefore, when $k = 1$,
\begin{align*}
	R_m \le 2^k R_{m-k} + k \cdot 2^{m-1} + 2^k - 1
\end{align*}
We prove that this inequality holds for $1 \le k \le m - m_0$ by induction on $k$ as follows:
\begin{align*}
	R_m
	&\le 2^k R_{m-k} + k \cdot 2^{m-1} + 2^k - 1 \\
	&\le 2^k \Big( 2 R_{m-k-1} + 2^{m-k-1} + 1 \Big) + k \cdot 2^{m-1} + 2^k - 1 \\
	&\le 2^{k+1} R_{m-(k+1)} + (k+1) \cdot 2^{m-1} + 2^{k+1} + 1.
\end{align*}

Then, for $k = m - m_0$,
\begin{align*}
	R_m
	&\le 2^{m - m_0} R_{m_0} + (m - m_0) \cdot 2^{m - 1} + 2^{m - m_0} - 1 \\
	&= \frac{1}{2} \left( m \cdot 2^m \right) + \left( \frac{R_{m_0} + 1}{2^{m_0}} - \frac{m_0}{2} \right) \left( 2^m \right) - 1. \qedhere
\end{align*}
\end{proof}

\begin{lem}
	\label{lem:proj-lower-8^m}
	For every cell $x$ and $m$ large enough, $\# B(x, m \cdot 2^m) \ge 8^m$.
\end{lem}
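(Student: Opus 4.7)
The plan is to mirror the proof of Lemma~\ref{lem:lower-bound-torus-klein}, replacing Lemma~\ref{lem: can reach entire V_m in r_m steps} with Lemma~\ref{lem:proj-lower:r_m-steps-throughout-V_m} and then invoking the asymptotic bound on $R_m$ just established. Let $V_m^{(x)}$ denote the copy of $V_m$ that contains $x$. By Lemma~\ref{lem:proj-lower:r_m-steps-throughout-V_m}, every cell of $V_m^{(x)}$ can be reached from $x$ by a path (inside $V_m^{(x)}$) of length at most $R_m$. Consequently $V_m^{(x)} \subseteq B(x, R_m + 1)$, and since $\# V_m^{(x)} = 8^m$ we obtain $\# B(x, R_m + 1) \ge 8^m$. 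Note that unlike the torus/Klein case, there is no doubling to reach a corner first, because the preceding lemma is already phrased for an arbitrary starting cell of $V_m$.

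The second step is to replace $R_m$ by the more transparent quantity $m \cdot 2^m$. Fix any $m_0 \ge 2$ and set $C = \frac{R_{m_0} + 1}{2^{m_0}} - \frac{m_0}{2}$. The preceding lemma then gives
\[
R_m \le \tfrac{1}{2} \, m \cdot 2^m + C \cdot 2^m - 1
\]
whenever $m \ge m_0$, so $R_m + 1 \le m \cdot 2^m$ as soon as $\tfrac{m}{2} \ge C$, i.e., for all $m \ge \max\{m_0, 2C\}$. For such $m$, monotonicity of $\# B(x, \cdot)$ in the radius yields
\[
\# B(x, m \cdot 2^m) \;\ge\; \# B(x, R_m + 1) \;\ge\; 8^m,
\]
as desired.

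There is no real obstacle here: the two conceptual ingredients (the reachability statement and the asymptotic size of $R_m$) are already in place, and what remains is just arithmetic. The one small point to mind is the strict inequality in the definition $B(x, r) = \{ y : d(x, y) < r \}$, which is why reaching every cell of $V_m^{(x)}$ in $R_m$ steps gives containment in $B(x, R_m + 1)$ rather than $B(x, R_m)$, and accordingly why we demand $R_m + 1 \le m \cdot 2^m$ rather than $R_m \le m \cdot 2^m$.
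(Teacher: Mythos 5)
Your proposal is correct and follows essentially the same route as the paper: apply Lemma~\ref{lem:proj-lower:r_m-steps-throughout-V_m} to get $\#B(x, R_m+1) \ge 8^m$, then use the preceding estimate on $R_m$ to replace $R_m$ by $m \cdot 2^m$ for large $m$. Your treatment is in fact slightly more careful than the paper's two-line argument, since you track the strict inequality in the definition of $B(x,r)$ and make the threshold $m \ge \max\{m_0, 2C\}$ explicit.
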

\begin{proof}
We have $\#B(x, R_m) \ge 8^m$ by Lemma~\ref{lem:proj-lower:r_m-steps-throughout-V_m}, and $R_m \le m \cdot 2^m$ for large enough $m$.
\end{proof}


\subsection{The Upper Bound}
We present the upper bound in the case of any of our three identifications: torus, Klein, or projective. The main idea is if a path has fewer than $2^m - 1$ steps, then it cannot cross a copy of $V_m$(Lemma~\ref{lem:cannot-cross-V_m}), hence is trapped in certain neighboring copies of $V_m$ (Corollary~\ref{cor: must share a vertex}). Crossing is made rigorous using $m$-edges, which we now define.

If $\ell$ is a line segment in $IMC$ along which two distinct copies of $V_m$ intersect after identifications, then call $\ell$ an \defnterm{$m$-edge}. Denote by $E_{V, W}$ the $m$-edge along which the copies $V$ and $W$ of $V_m$ intersect. Say that a cell is on an $m$-edge if one of its edges lies on the $m$-edge.

An $m$-edge is \defnterm{vertical} (or \defnterm{horizontal}, respectively) if it is vertical (horizontal) in the IMC before identification (more precisely, if its preimage before identification is a union of vertical edges). Two copies of $V_m$ are \defnterm{horizontal neighbors} (or \defnterm{vertical neighbors}, respectively) if they intersect along a vertical (horizontal) $m$-edge.

\begin{lem}
	\label{lem:cannot-cross-V_m-quickly}
	Within a copy of $V_m$, there are $2^m$ columns of cells that do not hit any $m'$-stitch for $m' \le m-1$.
\end{lem}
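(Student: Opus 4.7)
The plan is straightforward induction on $m$, exploiting the self-similar construction of $V_m$. The base case $m=1$ is direct: $V_1$ is the $3\times 3$ grid with its center cell removed, so among its three vertical column positions the middle one is broken by the central hole (the unique $0$-stitch), while the left and right positions each carry a full column of three cells, giving exactly $2=2^1$ good columns.

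For the inductive step, recall that $V_m$ consists of eight copies of $V_{m-1}$ arranged in a $3\times 3$ grid with the center square vacant, together with one $(m-1)$-stitch identifying the boundary of that central hole. A full column of cells in $V_m$ is a vertical line of $3^m$ cells in the geometric (pre-identification) layout; it decomposes into three aligned sub-columns of length $3^{m-1}$, one in each of the top, middle, and bottom copies of $V_{m-1}$ inside one of the three vertical strips of the $3\times 3$ arrangement. I will classify the good columns by which strip they occupy. A column in the middle strip would have to cross the central hole and so hits the $(m-1)$-stitch. A column in the left or right strip never touches the central hole and hence never hits the $(m-1)$-stitch, so it is good if and only if each of its three sub-columns avoids every $m'$-stitch with $m'\le m-2$ inside its own copy of $V_{m-1}$ (these being exactly the interior stitches of that copy).

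By the inductive hypothesis each $V_{m-1}$ contains $2^{m-1}$ good sub-column positions. The three stacked copies of $V_{m-1}$ in a single strip are congruent via translation (and, in the Klein and projective cases, possibly horizontal reflection), so their good-position sets align: a good position in the top copy is also a good position in the middle and bottom copies. Consequently each of the $2^{m-1}$ good positions in a strip extends to a single full column of length $3^m$, and summing over the two strips yields $2\cdot 2^{m-1}=2^m$ good columns in $V_m$, completing the induction.

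The only real point of care is the alignment claim in the last step: one must check that the set of good sub-column positions in $V_{m-1}$ is invariant under the horizontal reflection that might enter in the Klein and projective identifications. This is clear from the recursive description, since the set of good positions is obtained at each level by discarding the middle third of the available positions, an operation manifestly symmetric under reflecting the horizontal coordinate $i\mapsto 3^{m-1}-i+1$. Everything else is bookkeeping about the $3\times 3$ self-similar arrangement and the definition of an $m'$-stitch.
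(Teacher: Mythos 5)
Your proof is correct and takes essentially the same route as the paper's: induct on $m$ and observe that in each of the left and right thirds of $V_m$ the three stacked copies of $V_{m-1}$ align, so each of their $2^{m-1}$ good column positions concatenates into a full column of $V_m$, giving $2\cdot 2^{m-1}=2^m$. Two minor remarks: the concern about horizontal reflections is unnecessary, since the eight copies are literal translates of one another in the pre-identification picture (the gluings move no cells), and your phrase ``never touches the central hole'' should be read, as in the paper, as ``is not interrupted by it''---the innermost column of each outer strip does abut the hole boundary, which is harmless under the convention (forced already by your own base-case count) that a column hits a stitch only when it overlaps the corresponding vacant square.
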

\begin{proof}
Starting with $m = 0$, for which there is indeed $1 = 2^0$ column, we induct on $m$. Consider $V_{m+1}$, which contains eight copies of $V_m$. Notice that the three copies of $V_m$ on the left of the central $m$-stitch---and hence the $2^m$ columns of cells they contain---stack one on top of the next (Figure~\ref{fig:2^mcolumns-in-V-m}). Since there is an $m$-stitch, the same does not happen in the center, but it does happen on the right side; so, there are $2 \cdot 2^m = 2^{m+1}$ columns in $V_{m+1}$ that do not hit any $m$-stitch, as desired.
\begin{figure}[h]
	\centering
	\begin{minipage}{4cm}
		\centering
		\includegraphics[width=0.75cm]{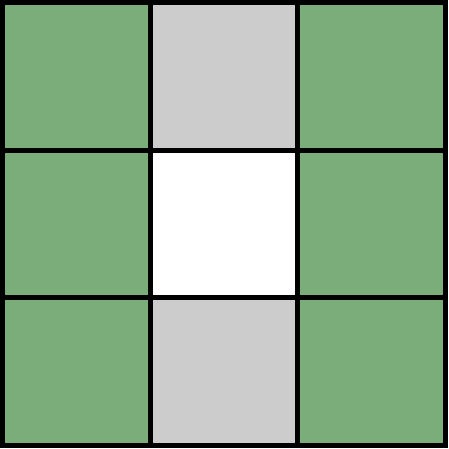}
	\end{minipage}
	\hspace{1cm}
	\begin{minipage}{4cm}
		\centering
		\includegraphics[width=2.25cm]{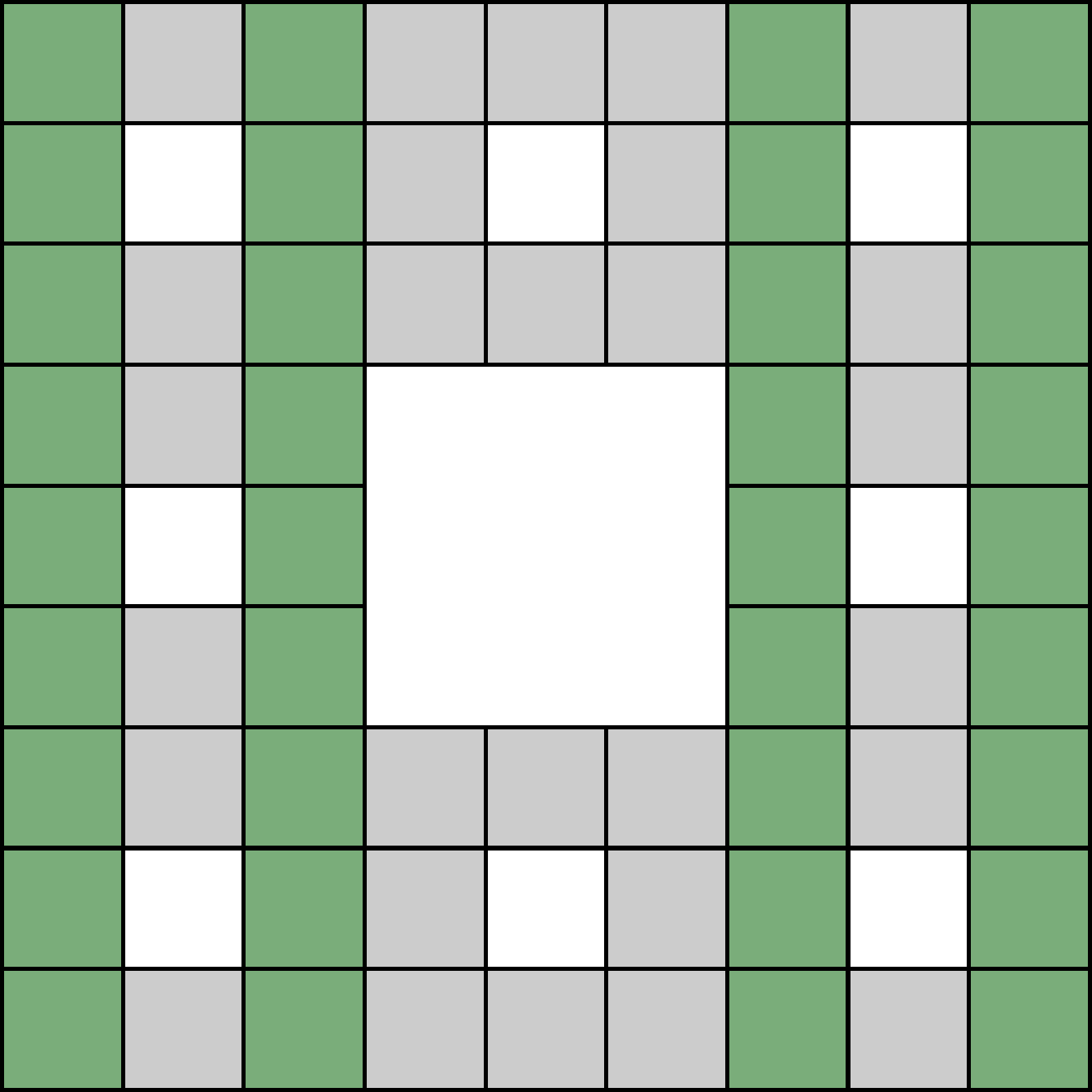}
	\end{minipage}
	\caption{The two columns in $V_1$ and the four columns in $V_2$}
	\label{fig:2^mcolumns-in-V-m}
\end{figure}
\end{proof}

\begin{lem}
	\label{lem:cannot-cross-V_m}
	Consider a path inside a copy of $V_m$ with at most $2^m - 1$ steps. If the path begins along an $m$-edge of the copy of $V_m$, then it cannot leave along the opposite $m$-edge.
\end{lem}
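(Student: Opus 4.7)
The plan is to use Lemma~\ref{lem:cannot-cross-V_m-quickly}, which produces $2^m$ stitch-free columns in a copy of $V_m$, and to argue that these act as a sequence of barriers that any crossing path must traverse one at a time. Without loss of generality assume the path begins on the left $m$-edge. Label the stitch-free columns $c_1 < c_2 < \cdots < c_{2^m}$ by increasing horizontal coordinate, with $c_1$ containing the cells on the left $m$-edge and $c_{2^m}$ containing the cells on the right $m$-edge.

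For a cell $p$ with horizontal coordinate $\xi(p)$, define the potential $\phi(p) = |\{i : \xi(c_i) \le \xi(p)\}|$, so $\phi = 1$ on the left $m$-edge and $\phi = 2^m$ on the right $m$-edge. The central claim I would establish is that every step of the path changes $\phi$ by at most one. For grid-adjacent steps this is immediate. For a step through an $m'$-stitch with $m' \le m-1$, the two identified cells sit on opposite sides of a hole of side length $3^{m'}$; since a stitch-free column is a full-height strip with no gaps, it cannot lie inside the horizontal footprint of such a hole, so the two identified cells sit in columns that are consecutive in the stitch-free sequence, and $\phi$ changes by at most one.

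Combining these facts, the path needs at least $2^m - 1$ steps just to raise $\phi$ from $1$ to $2^m$. Together with one additional step forced by the ``leaves along'' condition --- interpreted as requiring an exit step crossing the opposite $m$-edge, or equivalently the final transition onto the right-boundary cell of $c_{2^m}$ --- this produces the desired lower bound of $2^m$ steps and contradicts the hypothesis of at most $2^m - 1$. An alternative presentation is by induction on $m$: in $V_{m+1}$ a crossing path either traverses three copies of $V_m$ horizontally (costing at least $3 \cdot 2^m$ steps by the inductive hypothesis) or uses the central $m$-stitch to jump from middle-left to middle-right, yielding at least $2 \cdot 2^m + 1$ steps; in both cases the total exceeds $2^{m+1} - 1$.

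The main obstacle I anticipate is verifying the no-skipping claim uniformly for torus, Klein, and projective identifications. For torus identifications the cells glued by a stitch sit directly across the hole in the same row, so they lie in columns symmetric about the hole and hence consecutive in the stitch-free sequence. For Klein and projective identifications the glue involves an orientation-reversing flip that can move the matching cell to a different row; however, because the flip stays within the hole's horizontal footprint and no stitch-free column sits inside that footprint, the endpoints of any internal stitch still lie in columns immediately flanking the hole and no stitch-free column is skipped.
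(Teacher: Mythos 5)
Your argument is correct and takes essentially the same route as the paper: both rest on Lemma~\ref{lem:cannot-cross-V_m-quickly}'s $2^m$ stitch-free columns together with the observation that no $m'$-stitch ($m' \le m-1$) can carry a path past such a column, so a crossing path must meet all $2^m$ of them, which already costs $2^m - 1$ steps before the exit step. Your potential function simply formalizes the ``must traverse each column'' step that the paper states without elaboration, so no substantive difference.
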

\begin{proof}
Without loss of generality, assume a path goes between the left and right sides. From Lemma~\ref{lem:cannot-cross-V_m-quickly} we obtain $2^m$ columns of cells in $V_m$ that do not intersect any $m'$-stitches for $m' \le m-1$. Since the path is constrained to $V_m$, it cannot use any $M$-stitches for $M \ge m$, so it must traverse each of these $2^m$ columns. This requires $2^m - 1$ steps, and so the path cannot leave the copy of $V_m$ through the opposite $m$-edge.
\end{proof}

Lemma~\ref{lem:cannot-cross-V_m} prevents paths that are too short from crossing a copy of~$V_m$.

\begin{ex}
In Figure~\ref{fig: cross V_m basic}, a path cannot connect the blue $m$-edge to the green $m$-edge without leaving the copy of $V_m$ shown, unless it has $2^m - 1$ or more steps.
\begin{figure}[h]
	\centering
	\includegraphics{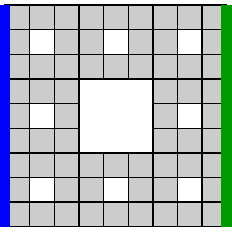}
	\caption{}
	\label{fig: cross V_m basic}
\end{figure}
\end{ex}

Our last result was restricted to paths inside a particular copy of $V_m$; we must now remove this restriction. Our goal is to show first that a short path remains within a few consecutive copies of $V_m$, and second that these copies all share a vertex. Because our path must whirl around this common vertex, we shall call it the center. To make this precise, let us first introduce $m$-stacks, $m$-sequences, and $m$-segments-of-two.

An \defnterm{$m$-stack} is a finite sequence of copies of $V_m$ such that consecutive copies are all horizontal neighbors or all vertical neighbors. A cell is in an $m$-stack if it is in a copy of $V_m$ in the stack. Observe that every $m$-stack is isomorphic as a cell graph to a sequence of copies of $V_m$ with the bottom edge of each copy glued to the top edge to the previous one with edge orientations preserved (Figure~\ref{fig:m-stack-examples}). Hence, every $m$-stack has a rectangular outer boundary. A \defnterm{side} of an $m$-stack is a side of this outer boundary. A cell is on a side if it intersects with the side.

\begin{figure}
	\centering
	\includegraphics[width=7cm]{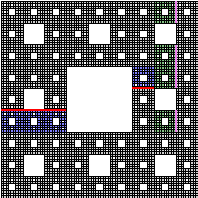}
	\hspace{1.5cm}
	\includegraphics[height=7cm]{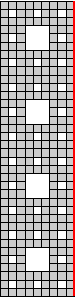}
	\caption{Two different $2$-stacks (one blue, one green, and containing four copies of $V_2$ each) in a copy of $V_4$ with Klein identifications. Both $2$-stacks are equivalent to the rectangular $2$-stack shown on the right.}
	\label{fig:m-stack-examples}
\end{figure}

Lemma~\ref{lem:cannot-cross-V_m} essentially provides the following result, as well:

\begin{cor}
	\label{lem:path-of-length-2^m-1}
	If a path is contained in an $m$-stack and has cells on two opposite sides, then its length is at least $2^m - 1$.
\end{cor}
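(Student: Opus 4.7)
My plan is to reduce Corollary~\ref{lem:path-of-length-2^m-1} to Lemma~\ref{lem:cannot-cross-V_m} and its column-counting argument, with a case split based on which pair of opposite sides the path connects. Using the isomorphism recalled just before the statement, I represent the $m$-stack canonically as $N$ copies of $V_m$ glued bottom-to-top with orientations preserved, so the stack is a rectangle of width $3^m$ and height $N \cdot 3^m$ with two natural pairs of opposite sides.

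In the first case, suppose the path has cells on the top and bottom sides. I would pass to a subpath starting on the top $m$-edge of the topmost copy $V_m^{(1)}$ and ending on the bottom $m$-edge of the bottommost copy $V_m^{(N)}$. If $N = 1$, Lemma~\ref{lem:cannot-cross-V_m} applies directly and yields length at least $2^m$. If $N > 1$, the subpath must leave $V_m^{(1)}$ at some point, and since three of its four $m$-edges lie on the outer boundary of the stack, the only exit is through its bottom $m$-edge. Thus the portion of the subpath trapped inside $V_m^{(1)}$ begins on its top $m$-edge and ends on its bottom $m$-edge, and Lemma~\ref{lem:cannot-cross-V_m} gives this portion at least $2^m$ edges, so the entire path has length at least $2^m - 1$.

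In the second case, suppose the path has cells on the left and right sides. Here I would adapt the column argument from the proof of Lemma~\ref{lem:cannot-cross-V_m} itself. By Lemma~\ref{lem:cannot-cross-V_m-quickly}, each copy of $V_m$ contains $2^m$ columns that avoid every $m'$-stitch with $m' \le m - 1$; because the copies are glued with orientations preserved, these columns align into $2^m$ uninterrupted columns spanning the full height of the stack. The edges gluing consecutive copies to each other act as vertical edges in the canonical representation and so do not change the horizontal coordinate, while each internal $m'$-stitch can only jump horizontally across its own hole, and the columns inside that hole are all broken. Consequently no edge available to the path can bypass one of the $2^m$ aligned columns, so the path must visit a cell in each, giving length at least $2^m - 1$.

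The main subtlety I anticipate is in the second case: verifying that combining the internal $m'$-stitches inside each $V_m$ with the gluing edges between consecutive $V_m$'s in the stack cannot produce a horizontal shortcut past one of the $2^m$ aligned columns. This reduces to a direct bookkeeping check on how each kind of edge changes the horizontal coordinate, and is manageable precisely because the orientation-preserving canonical representation forces every such edge to respect the column structure established by Lemma~\ref{lem:cannot-cross-V_m-quickly}.
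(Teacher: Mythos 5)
Your proposal is correct and follows essentially the route the paper intends: the paper gives no separate proof, remarking only that Lemma~\ref{lem:cannot-cross-V_m} ``essentially provides'' the corollary, and your case split (a top-to-bottom crossing forces a full crossing of a single copy, while a left-to-right crossing reuses the $2^m$ aligned full columns from Lemma~\ref{lem:cannot-cross-V_m-quickly} together with the orientation-preserving canonical form of the stack) is exactly that column-counting argument carried over. The only slight imprecision is that Lemma~\ref{lem:cannot-cross-V_m} as literally stated concerns a path that \emph{exits} through the opposite $m$-edge, so in your first case you are really invoking the counting argument inside its proof (which gives $\ge 2^m$ cells, hence $\ge 2^m-1$ steps) rather than the lemma's statement itself; this is harmless since $2^m-1$ is all that is claimed.
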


Consider a finite path $\gamma$ as a sequence of cells: $c_0, c_1, c_2, \dotsc, c_n$. From $\gamma$ we form a sequence of copies of $V_m$ as follows:
\begin{enumerate}
	\item For each $c_i$, find the copy $V_m^{c_i}$ of $V_m$ containing $c_i$, and form a new sequence $V_m^{c_1}, \dotsc, V_m^{c_n}$.
	\item While consecutive elements in this new sequence are equal, delete all but one of them.
\end{enumerate}
What remains after these deletions we call the \defnterm{$m$-sequence} of $\gamma$. In other words, the first copy in the $m$-sequence is the copy of $V_m$ that contains the starting cell, the second term is the copy containing the first cell $c_i$ that is not in the first copy, the third copy is that containing first cell after $c_i$ that is not in the second copy, and so on.

For an $m$-sequence, we define an \defnterm{$m$-segment-of-two} as a tuple $(i, V, W)$ such that $V$ is the $i$th copy of $V_m$ in the $m$-sequence, $W$ is the $(i + 1)$st copy, and neither $V$ nor $W$ is the $(i-1)$st copy (if it exists). The $m$-segments-of-two from a particular $m$-sequence are clearly ordered by their first entries. Note that if $(i, V, W)$ is an $m$-segment-of-two, then $V$ and $W$ form an $m$-stack.

The $j$th copy of $V_m$ in the $m$-sequence of a path is in an $m$-segment-of-two $(i, V, W)$ if and only if all copies inclusively between the $i$th and the $j$th copies are either $V$ or $W$. A cell $c$ is in an $m$-segment-of-two if and only if the copy of $V_m$ containing $c$ is in the $m$-segment-of-two. Observe that all copies of $V_m$ in an $m$-segment-of-two $(i, V, W)$ are either $V$ or $W$.

\begin{ex}
\label{example:m-sequence-of-a-path}
Fix $m = 1$ and consider the blue path through $V_2$ shown  in Figure~\ref{fig:example-m-sequence}. The figure marks four copies of $V_1$, namely $U$, $V$, $W$ and $X$. The $m$-sequence of the path is $U, V, U, W, X$. There are three $m$-segments-of-two: the first, $(1, U, V)$, contains the first three terms of the $m$-sequence; the next, $(3, U, W)$, contains the third and the fourth terms; and the last, $(4, W, X)$, contains the final two terms.
\begin{figure}[h]
	\centering
	\includegraphics[width=6cm]{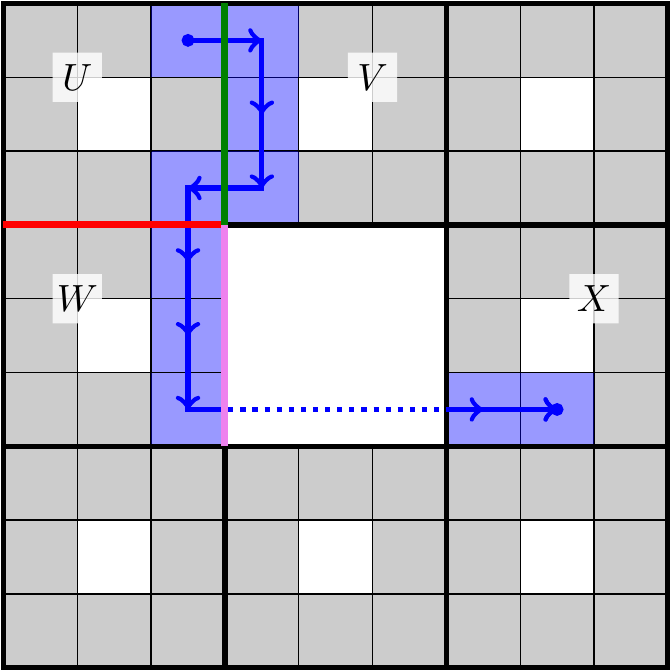}
	\caption{A sample path in $V_2$ (proceeding downwards, roughly, and then to the right).}
	\label{fig:example-m-sequence}
\end{figure}
\end{ex}

\newcommand{\Enter}[1]{\textrm{Ent}(#1)}
\newcommand{\Exit}[1]{\textrm{Exit}(#1)}

A path $(c_0, c_1, \dotsc)$ is said to \defnterm{enter} an $m$-segment-of-two $(i, V, W)$ through an $m$-edge if there are consecutive cells $c_n, c_{n+1}$ so that (1)~both are on the $m$-edge, and (2)~only $c_{n+1}$ is within $(i, V, W)$. The notion of a path \defnterm{exiting} is analogous, with $c_n$ in $(i, V, W)$ instead. The $m$-edge along which the path enters $(i, V, W)$ is denoted by $\Enter i$, and the $m$-edge along which it exits is denoted $\Exit i$.

\begin{ex}[Continuing Example~\ref{example:m-sequence-of-a-path}]
Figure~\ref{fig:example-m-sequence} distinguishes the following $m$-edges:
\begin{center}
	\begin{tabular}{|c|cc|cc|} \hline
		$m$-segment-of-two & Enter & Color & Exit & Color \\ \hline
		$(1, U, V)$ & {-} & {-} & $\Exit 1$ & red \\
		$(3, U, W)$ & $\Enter 3$ & green & $\Exit 3$ & pink \\
		$(4, W, X)$ & $\Enter 4$ & red & {-} & {-} \\ \hline
	\end{tabular}
\end{center}
\end{ex}

Recall here that if $V$ and $W$ are copies of $V_m$, then $E_{V, W}$ is the $m$-edge along which $V$ and $W$ intersect.

\begin{lem}
	Let $(i, V, W)$ be an $m$-segment-of-two associated to a path of length at most $2^m$.
	\begin{enumerate}
		\item Each of $\Enter i$ and $\Exit i$ (assuming it exists) is distinct from $E_{V, W}$, although it intersects $E_{V, W}$.
		\item If $\Enter i$ and $\Exit i$ both exist (\textit{i.e.,} the path comes from and goes to other copies of $V_m$), then they are on the same side of the $m$-stack formed by $V$ and $W$.
	\end{enumerate}
\end{lem}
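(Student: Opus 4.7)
The plan is to prove both parts by contradiction, leveraging the two existing crossing bounds: Lemma~\ref{lem:cannot-cross-V_m} for part~(1) and Corollary~\ref{lem:path-of-length-2^m-1} for part~(2). In each case, the budget of $2^m$ for the total path length is just barely defeated once I account for the ``overhead'' edges (the entering edge together with a transition edge in part~(1), or the entering and exiting edges in part~(2)), since $(2^m - 1) + 2 = 2^m + 1$.

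For part~(1), I first note that by the definition of an $m$-segment-of-two, the $i$th copy is $V$, so the first cell $c_{n+1}$ of the segment lies in $V$, and hence $\Enter i$ is an $m$-edge of $V$. The claim $\Enter i \ne E_{V,W}$ is immediate: any cell on $E_{V,W}$ belongs to $V$ or $W$, whereas $c_n$ belongs to the $(i-1)$st copy, which by definition of a segment-of-two is neither $V$ nor $W$. To show $\Enter i$ intersects $E_{V,W}$, I suppose the contrary; then $\Enter i$ must be the $m$-edge of $V$ opposite $E_{V,W}$. Because the segment contains $W$ at position $i+1$, the path has a first transition $c_k \in V \to c_{k+1} \in W$ with both endpoints on $E_{V,W}$. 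The sub-path $c_{n+1}, \ldots, c_k$ lies entirely inside $V$ and connects the opposite $m$-edges $\Enter i$ and $E_{V,W}$, so Lemma~\ref{lem:cannot-cross-V_m} forces its length to be at least $2^m - 1$. Adding the entering edge $c_n \to c_{n+1}$ and the transition edge $c_k \to c_{k+1}$, the total path length is at least $2^m + 1$, contradicting the hypothesis. The argument for $\Exit i$ is the mirror image, using the last transition between $V$ and $W$ (whose copy agrees with that of $c_p$) in place of the first.

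For part~(2), part~(1) already forces both $\Enter i$ and $\Exit i$ to be $m$-edges of $V$ or $W$ that are distinct from $E_{V,W}$ and meet it at a corner of $V$ or $W$. Geometrically these are exactly the $m$-edges lying on one of the two sides of the rectangular $m$-stack that are perpendicular to $E_{V,W}$ --- the top and bottom sides if $V$ and $W$ are horizontal neighbors, or the left and right sides if they are vertical neighbors. Suppose for contradiction that $\Enter i$ and $\Exit i$ lie on opposite such sides. Then the sub-path $c_{n+1}, \ldots, c_p$ is contained in the $m$-stack $V \cup W$ and has cells on two opposite sides of it, so by Corollary~\ref{lem:path-of-length-2^m-1} its length is at least $2^m - 1$. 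Throwing in the entering edge $c_n \to c_{n+1}$ and the exiting edge $c_p \to c_{p+1}$, the total path length is at least $2^m + 1$, again a contradiction.

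The main obstacle I anticipate is purely bookkeeping: since the budget is overrun by exactly one step, I need to keep the entering, transition, and exiting edges strictly disjoint from the sub-path whose length is being lower-bounded, so that the counts add cleanly. The only other thing to check is that the argument is insensitive to whether the stack is horizontally or vertically oriented (these are interchanged by a $90^\circ$ rotation of the picture) and to the torus/Klein/projective identification type (which may twist the gluing of $V$ to $W$ but does not affect the combinatorial $m$-stack structure on which Corollary~\ref{lem:path-of-length-2^m-1} operates).
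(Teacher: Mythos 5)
Your proof is correct and follows essentially the same route as the paper's: in part (1) you rule out the edge parallel to $E_{V,W}$ via the crossing bound plus the entering and transition steps, and in part (2) you rule out opposite sides via Corollary~\ref{lem:path-of-length-2^m-1} applied to the stack plus the entering and exiting steps. The only cosmetic difference is that you cite Lemma~\ref{lem:cannot-cross-V_m} where the paper's terser argument cites Corollary~\ref{lem:path-of-length-2^m-1}; the content is identical, and your explicit count $1+(2^m-1)+1=2^m+1>2^m$ is, if anything, more careful than the original.
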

\begin{proof}
For the first claim, it suffices to consider $\Enter i$; the case for $\Exit i$ is similar. By the definitions of $m$-segments-of-two and entering, $\Enter i$, if it exists, cannot be $E_{V, W}$. Since the path must go from $V$ to $W$, it has cells on $E_{V, W}$; furthermore, since the path has at most $2^m$ steps, and one step is required to enter the $m$-segment-of-two, Lemma~\ref{lem:path-of-length-2^m-1} prevents it from entering through an $m$-edge parallel to $E_{V, W}$. The first claim follows, for the remaining edges of the $m$-stack formed by $V$ and $W$ all intersect $E_{V, W}$.

For the second statement, suppose $\Enter i$ and $\Exit i$ are on different sides of the $m$-segment-of-two. Counting the steps needed to enter and exit, Lemma~\ref{lem:path-of-length-2^m-1} implies the path has more than $2^m$ cells, a contradiction.
\end{proof}

Consider a path with at most $2^m$ steps and at least two $m$-segments-of-two. Since the path has at least two $m$-segments-of-two, for each $m$-segment-of-two $(i, V, W)$, at least one of $\Enter i$ and $\Exit i$ exists. Define the \defnterm{center} of the $m$-segment-of-two $(i, V, W)$ to be the vertex where $E_{V, W}$, $\Enter i$ and $\Exit i$ (or those that exist) intersect. Notice that the center will always be a corner vertex of $V$ and $W$. The lemma above ensures the center is well defined.

\begin{ex}
Figure~\ref{fig:center-example} shows a green path of length~3 and a blue path of length~4. Forming the 0-sequence associated to the green path, we see its center is marked by the pink dot. Due to the torus identifications taken in this picture, the center of the blue path is represented by four points, the red dots.
\begin{figure}[h]
	\centering
	\includegraphics[width=8cm]{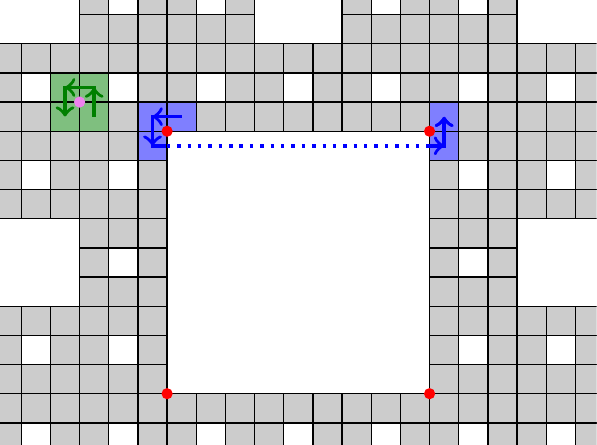}
	\caption{The centers of two paths.}
	\label{fig:center-example}
\end{figure}
\end{ex}

\begin{lem}
	\label{lem:m-segments-of-two-have-same-centers}
	Consider a path with at most $2^m$ steps and at least two $m$-segments-of-two. The centers of the $m$-segments-of-two are the same.
\end{lem}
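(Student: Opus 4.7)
The plan is to establish the stronger claim that any two \emph{consecutive} $m$-segments-of-two---consecutive in the natural ordering by first index, with none strictly between them---share a common center. The lemma then follows by transitivity along the finite chain of $m$-segments-of-two.

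Fix two such consecutive $m$-segments-of-two $(i, V, W)$ and $(i', V', W')$ with $i < i'$, and write $C_1, C_2, \ldots$ for the $m$-sequence of the path. My first step is to describe the block $C_i, \ldots, C_{i'}$: since no index strictly between $i$ and $i'$ gives rise to an $m$-segment-of-two, the definition forces $C_{k+1} = C_{k-1}$ for every $i < k < i'$, so this block is a pure alternation between $V$ and $W$ (the case $i' = i + 1$ is the degenerate version, with no intermediate positions). In particular $C_{i'} \in \{V, W\}$, while $C_{i'+1} = W'$ is a fresh copy $X \notin \{V, W\}$ because $(i', V', W')$ is itself an $m$-segment-of-two. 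Reading off which cells of the path sit where then yields
\[ \Exit i = E_{C_{i'}, X} = E_{V', W'} \qquad \text{and} \qquad \Enter{i'} = E_{C_{i'-1}, C_{i'}} = E_{V, W}, \]
the second equality because $C_{i'-1}$ is whichever of $V, W$ is not $C_{i'}$.

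With these identifications in hand the two centers become transparent. By the preceding lemma, the center of $(i, V, W)$ is the common meeting point of $E_{V, W}$, $\Enter i$, and $\Exit i$; in particular it lies on $E_{V, W} \cap \Exit i$, which that same lemma guarantees is a single corner vertex (two distinct $m$-edges that genuinely intersect). The center of $(i', V', W')$ similarly lies on $E_{V', W'} \cap \Enter{i'} = \Exit i \cap E_{V, W}$, the very same corner. Hence the two centers coincide. I expect the main obstacle to be purely bookkeeping---tracking which copy plays which role at positions $i' - 1, i', i' + 1$ under either parity of $i' - i$, and handling the endpoint cases where $\Enter i$ or $\Exit{i'}$ fails to exist---while the essential geometric input, that $\Exit i$ meets $E_{V, W}$ in a corner rather than running parallel to it, is supplied by the previous lemma via the global hypothesis that the path has at most $2^m$ steps.
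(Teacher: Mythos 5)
Your proposal is correct and follows essentially the same route as the paper: reduce to consecutive $m$-segments-of-two, identify $\Exit i = E_{V',W'}$ and $\Enter{i'} = E_{V,W}$, and observe that both centers are the single intersection vertex of $E_{V,W}$ and $E_{V',W'}$, with the earlier lemma (via the $2^m$-step hypothesis) guaranteeing these edges meet in a corner. Your explicit justification that the intermediate block of the $m$-sequence alternates between $V$ and $W$ is just a spelled-out version of what the paper encodes in its setup ``$X \in \{V,W\}$, $Y \notin \{V,W\}$,'' so no substantive difference.
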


\begin{proof}
Let us limit our focus to consecutive $m$-segments-of-two: $(i, V, W)$ and $(j, X, Y)$, where $i < j$, $X \in \{V, W \}$, and $Y \notin \{ V, W \}$. We show that the centers are the same, and induction completes the argument. Note that the path enters $(j, X, Y)$ through $E_{V, W}$ and exits $(i, V, W)$ at $E_{X, Y}$. The center of $(i, V, W)$, therefore, is where $\Exit i = E_{X, Y}$ intersects $E_{V, W}$, and the center of $(j, X, Y)$ is where $\Enter j = E_{V, W}$ intersects $E_{X, Y}$. Hence, the intersection of $E_{V, W}$ and $E_{X, Y}$ marks both centers.
\end{proof}

\begin{lem}
	For a path of length at most $2^m$, let $U$ be the first copy of $V_m$ in the associated $m$-sequence. There exists a corner vertex $x$ of $U$ shared by every copy of $V_m$ in the $m$-sequence.
\end{lem}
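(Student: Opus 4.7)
The plan is to split into cases based on how many $m$-segments-of-two the path has in its $m$-sequence, and use Lemma~\ref{lem:m-segments-of-two-have-same-centers} to produce a single common corner in the generic case.

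First, I dispose of the degenerate cases. If the $m$-sequence consists solely of $U$, then any corner vertex of $U$ serves as $x$. If the $m$-sequence contains more than one copy but has only a single $m$-segment-of-two $(1, U, V')$, then by the characterization of which positions lie in an $m$-segment-of-two, every copy of $V_m$ in the $m$-sequence equals $U$ or $V'$. I pick $x$ to be either endpoint of the shared $m$-edge $E_{U, V'}$; since horizontal or vertical neighbors share a full side of $V_m$, this endpoint is a corner vertex of both $U$ and $V'$, and hence of every copy in the $m$-sequence.

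The main case is when the path admits at least two $m$-segments-of-two. Lemma~\ref{lem:m-segments-of-two-have-same-centers} produces a common center $x$, which (by the definition of center preceding that lemma) is a corner vertex of both constituents $V$ and $W$ of every $m$-segment-of-two. It remains to show that every copy of $V_m$ in the $m$-sequence lies in some $m$-segment-of-two, since then each such copy has $x$ as one of its corner vertices, and in particular so does $U$. Writing the $m$-sequence as $A_1, A_2, \ldots$ with $A_k \neq A_{k+1}$, I would fix a position $j$ and choose the smallest index $i \leq j$ for which $A_i, A_{i+1}, \ldots, A_j \in \{A_{j-1}, A_j\}$ (with the understanding that $A_1$ lies in the first $m$-segment-of-two $(1, A_1, A_2)$). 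Minimality forces either $i = 1$ or $A_{i-1} \notin \{A_{j-1}, A_j\}$, and since $A_i \neq A_{i+1}$ are both drawn from the two-element set $\{A_{j-1}, A_j\}$, we have $\{A_i, A_{i+1}\} = \{A_{j-1}, A_j\}$. Thus $(i, A_i, A_{i+1})$ is an $m$-segment-of-two containing the $j$th copy.

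The main obstacle is purely bookkeeping: cleanly verifying that every position in the $m$-sequence is covered by some $m$-segment-of-two. Once this coverage claim is in hand, Lemma~\ref{lem:m-segments-of-two-have-same-centers} does the geometric work and the conclusion follows immediately.
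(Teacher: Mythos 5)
Your proof is correct and takes essentially the same route as the paper: handle the one-copy and single-$m$-segment-of-two cases directly, then in the main case invoke Lemma~\ref{lem:m-segments-of-two-have-same-centers} to obtain a common center, which is a corner vertex of the constituents of every $m$-segment-of-two and hence of $U$. The only difference is that you verify explicitly (via the minimal-index bookkeeping) that every copy in the $m$-sequence lies in some $m$-segment-of-two, a coverage fact the paper asserts without proof.
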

\begin{proof}
First, if $U$ is the only copy of $V_m$ in the $m$-sequence, the result holds. Second, when there is only one $m$-segment-of-two, $(1, V, W)$, we may choose any vertex shared by the two relevant copies $V$ and $W$.

If the path has at least two $m$-segments-of-two, then the centers of all $m$-segments-of-two are the same by Lemma~\ref{lem:m-segments-of-two-have-same-centers}. In particular, the centers are the same as that of the first $m$-segment-of-two. Since the center of the first $m$-segment-of-two is a vertex of $U$, the result follows from the fact that every $m$-cell touches the center of an $m$-segment-of-two.
\end{proof}

The proof shows the center often suffices for this common vertex. The only time it does not is for paths with one $m$-segment-of-two, when the center is undefined. The existence of this vertex then gives:

\begin{cor}
	\label{cor: must share a vertex}
	Consider $z \in IMC$, and denote by $V_z$ the copy of $V_m$ containing $z$. Define $\cV$ to be the set of all copies of $V_m \subset IMC$ that share a corner vertex with $V_z$. Any path from $z$ with length at most $2^m - 1$ remains within $\bigcup_{V \in \cV} V$; that is,
	\[ B\left( z, 2^m \right) \subseteq \bigcup_{V \in \cV} V. \]
\end{cor}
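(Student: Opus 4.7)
The plan is to invoke the preceding lemma almost verbatim, with only a small amount of unwinding of definitions. Given any $y \in B(z, 2^m)$, the definition of the ball gives $d(z, y) < 2^m$, and since graph distances are integers this forces the existence of a path $\gamma = (c_0, c_1, \ldots, c_n)$ from $z = c_0$ to $y = c_n$ with $n \le 2^m - 1 < 2^m$. So the length hypothesis of the preceding lemma is satisfied.

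The first copy of $V_m$ in the $m$-sequence of $\gamma$ is exactly $V_z$, since $z = c_0 \in V_z$. The preceding lemma then supplies a corner vertex $x$ of $V_z$ that is contained in every copy of $V_m$ appearing in the $m$-sequence of $\gamma$. Next I would observe that for each $i$, the copy $V_m^{c_i}$ containing $c_i$ is by construction one of the copies in the $m$-sequence (the duplicate-removal step in its definition never deletes all occurrences of a copy). Hence $x \in V_m^{c_i}$, so $V_m^{c_i}$ shares the corner vertex $x$ with $V_z$ and therefore belongs to $\cV$. It follows that $c_i \in \bigcup_{V \in \cV} V$ for every $i$, and in particular $y \in \bigcup_{V \in \cV} V$, establishing the claimed inclusion.

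The genuine obstacle in this whole chain of arguments is not this corollary but the earlier lemmas. The nontrivial content is ruling out that a short path could spiral across several distant copies of $V_m$; that is handled by the column-counting crossing estimate of Lemma~\ref{lem:cannot-cross-V_m}, which forces the path's $m$-sequence to stay near a common corner, and then by the centre-coincidence statement of Lemma~\ref{lem:m-segments-of-two-have-same-centers}, which pins down that common corner uniquely as a vertex of the first copy $V_z$. Once those are in place, the corollary is simply a restatement in metric-ball language, and the short derivation above is the entire proof.
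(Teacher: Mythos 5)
Your argument is correct and matches the paper, which states this corollary as an immediate consequence of the preceding lemma (that a corner vertex of the first copy in the $m$-sequence is shared by every copy in the sequence); you simply make explicit the routine unwinding — $d(z,y)<2^m$ gives a path of length at most $2^m-1$, every cell's copy appears in the $m$-sequence, and each such copy contains the common corner vertex of $V_z$, hence lies in $\cV$. No gaps; the substantive work indeed lives in Lemmas~\ref{lem:cannot-cross-V_m} and~\ref{lem:m-segments-of-two-have-same-centers}, as you note.
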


\begin{lem}
	\label{lem: counting neighbors}
	With the notation of Corollary~\ref{cor: must share a vertex}, $\# \cV \le 45$.
\end{lem}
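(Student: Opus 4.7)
The plan is to bound $\#\cV$ by summing contributions at each of the four corner vertices of $V_z$. Let $c_1, \dots, c_4$ denote these four corners and, for any vertex $c$ of $IMC$, let $N(c)$ be the number of copies of $V_m$ in $IMC$ that have $c$ as one of their own corners. Since $V_z \in \cV$ is counted once at each of its four corners in $\sum_i N(c_i)$,
\[
 \#\cV \;\le\; \sum_{i=1}^4 N(c_i) - 3.
\]
It therefore suffices to prove that $N(c) \le 12$ for every vertex $c$, which gives $\#\cV \le 4 \cdot 12 - 3 = 45$.

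To bound $N(c)$, I would view $c$ as an equivalence class of planar vertices in the unfolded picture of $IMC$, where two planar vertices are identified precisely when some stitch (the boundary of a vacant square at some level) pairs them. The key geometric observation is a rigidity lemma: a single planar vertex cannot lie on stitches of two different levels. Indeed, if a vertex $p$ sits on the $\ell$-vacant boundary inside some $V_{\ell+1}$, then re-coordinatizing $p$ inside the enclosing $V_{\ell+2}$ as $((I{-}1)/3 + x/3,\,(J{-}1)/3 + y/3)$ with $(I,J) \in \{1,2,3\}^2 \setminus \{(2,2)\}$ and $(x,y)$ at $1/3$- or $2/3$-fractions shows that neither coordinate can equal $1/3$ or $2/3$ in $V_{\ell+2}$ (so $p$ misses every $(\ell{+}1)$-stitch); and rescaling into any enclosed $V_\ell$ places $p$ on the outer boundary of $V_\ell$, which is disjoint from every stitch interior to $V_\ell$. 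Consequently the pre-images of $c$ all lie inside a single $V_{\ell+1}$ and are exactly the points a single $\ell$-stitch identifies, giving at most four pre-images under torus or Klein identifications (and at most two under projective).

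Next I would perform a local count at each pre-image. In the worst case $c$ is at a corner of some $\ell$-vacant, and around each such corner only three of the four planar quadrants contain a copy of $V_\ell$ (the fourth lies inside the vacant), contributing at most three copies of $V_m$ with that pre-image as a corner, namely the corner sub-copy of $V_m$ within each of those three $V_\ell$'s. Summing over the $\le 4$ pre-images yields at most $4 \cdot 3 = 12$ (copy, pre-image) pairs; since every copy of $V_m$ meeting $c$ contributes at least one such pair, $N(c) \le 12$. For every other type of vertex -- $c$ generic, or in the interior of a stitch edge -- both the number of pre-images and the blocks per pre-image are smaller, so $N(c) \le 4$.

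The main obstacle I anticipate is the rigidity lemma, because without it one could imagine $c$ being identified through chains of stitches at many different scales, inflating the pre-image count well beyond four. Once the $3$-adic alignment of nested vacant squares rules out such chains, the remainder of the argument is a direct local enumeration around a single vacant corner.
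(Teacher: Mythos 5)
Your proof is correct and takes essentially the same route as the paper: you bound the number of copies of $V_m$ meeting each of the four corners of $V_z$ by $12$, with the worst case occurring at a corner of a stitch, and your tally $4\cdot 12 - 3 = 45$ is just a rearrangement of the paper's $4\cdot 11 + 1 = 45$ (eleven \emph{other} copies per corner, plus $V_z$ itself). The only difference is that the paper justifies the per-corner count of twelve by a figure, whereas you supply the supporting argument (a vertex lies on at most one stitch, by the $3$-adic alignment of nested vacant squares, so at most four pre-images with three carpet quadrants each), which is a sound and welcome elaboration of the same idea.
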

\begin{proof}
Consider a corner $x$ of $V_z$. If $x$ is a corner of a stitch, then $V_z$ touches at most 11~other copies of $V_m$ at $x$ (Figure~\ref{fig:cardinality-share-vertex-through-stitch-with-eleven-copies}). Otherwise, it touches only 3~other copies of $V_m$ at $x$. Counting at most 11~copies for each of the four outer vertices of $V_z$, along with $V_z$ itself, we have
\[ \# \cV \le 4 \cdot 11 + 1 = 45. \qedhere \]
\begin{figure}[h]
	\centering
	\includegraphics{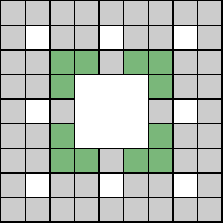}
	\caption{A copy of $V_m$ can share a vertex with eleven other copies through the stitch.}
	\label{fig:cardinality-share-vertex-through-stitch-with-eleven-copies}
\end{figure}
\end{proof}

\begin{remark}
	The count of~45 in Lemma~\ref{lem: counting neighbors} is sufficient for our purposes, but it can be improved to~21. Doing so improves the constant~2880 appearing within the proof of Theorem~\ref{thm:full-bound-torus-klein}.
\end{remark}


\subsection{Full Bound}
Here we combine the upper and lower bounds above, first for torus and Klein identifications and then for projective identifications.

\begin{thm}
	\label{thm:full-bound-torus-klein}
	Let $x \in IMC$ with either torus or Klein identifications. Then $\# B(x, r) \sim r^3$; that is, there are constants $c$ and $C$ so that, when $r$ is sufficiently large,
	\[ c r^3 \le \# B(x, r) \le C r^3. \]
\end{thm}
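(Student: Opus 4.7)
The plan is to translate the scale-$m$ bounds already proved into radius-$r$ bounds by selecting $m$ optimally in each direction. Both previously established bounds have the form ``$8^m$ against a radius on the order of $2^m$,'' which is exactly the $r^3$ growth that the theorem asserts.

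For the upper bound, I would combine Corollary~\ref{cor: must share a vertex} with Lemma~\ref{lem: counting neighbors}: since $B(x, 2^m) \subseteq \bigcup_{V \in \cV} V$, $\# \cV \le 45$, and each copy of $V_m$ contains exactly $8^m$ cells, we get $\# B(x, 2^m) \le 45 \cdot 8^m$. For an arbitrary radius $r \ge 1$, I would pick the smallest $m$ with $2^m > r$; then $2^{m-1} \le r$ forces $2^m \le 2r$, and hence
\[ \# B(x, r) \le \# B(x, 2^m) \le 45 \cdot 8^m \le 45 \cdot 8\, r^3, \]
which is the promised $C r^3$ bound (the remark after Lemma~\ref{lem: counting neighbors} explains why a slightly more wasteful choice of $m$ leads to the constant $2880$ quoted there).

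For the lower bound, I would invoke Lemma~\ref{lem:lower-bound-torus-klein} together with the closed form \eqref{eq: r_m closed form}: $\# B(x, 2r_m + 1) \ge 8^m$ and $2r_m + 1 = 2^{m+3} - 11$. Given $r$ sufficiently large, I would choose the largest $m$ with $2^{m+3} - 11 \le r$; then $2^{m+4} - 11 > r$, so $2^m > (r+11)/16$, and consequently
\[ \# B(x, r) \;\ge\; \# B(x, 2r_m + 1) \;\ge\; 8^m \;>\; \bigl( (r+11)/16 \bigr)^3 \;\ge\; c\, r^3 \]
for a suitable constant $c > 0$. Both estimates apply uniformly to torus and Klein identifications, since the underlying lemmas are proved in both cases. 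I do not expect any substantive obstacle here: the combinatorial content is already packaged in the prior lemmas, and the theorem reduces to choosing $m \approx \log_2 r$ in each direction and tracking the floor/ceiling bookkeeping to see that the resulting constants are strictly positive.
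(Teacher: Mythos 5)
Your proposal is correct and follows essentially the same route as the paper: both arguments combine Lemma~\ref{lem:lower-bound-torus-klein} (via the closed form \eqref{eq: r_m closed form}) for the lower bound with Corollary~\ref{cor: must share a vertex} and Lemma~\ref{lem: counting neighbors} for the upper bound, then choose $m \approx \log_2 r$. The only difference is bookkeeping---you pick $m$ separately in each direction and so land on slightly sharper constants (e.g.\ $45\cdot 8\, r^3$ instead of $2880\, r^3$), whereas the paper uses a single $m$ with a shift to $m-4$ for the lower bound.
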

\begin{proof}
For any $r$, we can find $m$ so that
\[ 2^{m-1} \le r + 1 \le 2^m. \]
With the notation of Section~\ref{sec:cardinality-torus-klein}, specifically~\eqref{eq: r_m closed form}, we can write
\[ 2 r_{m-4} + 1
	 = 2^{m-1} - 11
	 \le r \le 2^m. \]
Lemma~\ref{lem:lower-bound-torus-klein} then gives us the lower bound
\[ \# B(x, r) \ge \# B(x, 2r_{m-4} + 1) \ge 8^{m-4}. \]
Further, Corollary~\ref{cor: must share a vertex} and Lemma~\ref{lem: counting neighbors} yield the upper bound
\[ \# B \left(x, r \right) \le \# B \left(x, 2^m \right) \le \# \cV \cdot 8^m \le 45 \cdot 8^m. \]
Additionally, if $r$ is sufficiently large, $2^{m-2} \le r \le 2^m$, in which case
\[ \frac{1}{8^m} \le \frac{1}{r^3} \le \frac{1}{8^{m-2}}. \]
Apply the lower and upper bounds to obtain
\[ \frac{8^{m-4}}{8^m} \le \frac{\# B(x, 2 r_{m-4} + 1)}{r^3} \le \frac{\# B(x, r)}{r^3} \le \frac{\# B(x, 2^m)}{r^3} \le \frac{45 \cdot 8^m}{8^{m-2}}. \]
Hence
\[ \frac{1}{4096} r^3 \le \# B(x, r) \le 2880 r^3. \qedhere \]
\end{proof}

\begin{remark}\label{rmk: ball-cardinalty-with-mixed-identifications}
The upper bound holds on the infinite magic carpet with any identifications. The lower bound holds even when torus and Klein styles are mixed together, so long as there are no projective identifications. Hence $\#B(x,r) \sim r^3$ for any mixture of torus and Klein identifications.
\end{remark}

Now we return to projective identifications.

\begin{thm}
	With projective identifications, for large enough $r$,
	\[ c \left( \frac{r}{\log r} \right)^3 \leq \# B(x, r) \leq C r^3. \]
\end{thm}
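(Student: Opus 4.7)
The plan is to handle the two bounds separately, using machinery already developed: for the upper bound, everything from Corollary~\ref{cor: must share a vertex} and Lemma~\ref{lem: counting neighbors} onward is identification-agnostic, so the argument from Theorem~\ref{thm:full-bound-torus-klein} carries over without change; for the lower bound, Lemma~\ref{lem:proj-lower-8^m} is the only projective-specific ingredient needed, combined with a calibration of $m$ against $r$.

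\textbf{Upper bound.} Given $r$ large, pick $m$ with $2^{m-1} \le r+1 \le 2^m$ exactly as in the proof of Theorem~\ref{thm:full-bound-torus-klein}. Then Corollary~\ref{cor: must share a vertex} and Lemma~\ref{lem: counting neighbors} (which are proved using only the cell-graph structure of $V_m$, not the identification type) give
\[ \#B(x,r) \le \#B(x, 2^m) \le 45 \cdot 8^m. \]
Since $8^m \le 8(r+1)^3 \le C r^3$ for some constant $C$ and all $r$ sufficiently large, the upper bound follows. This is exactly the point of Remark~\ref{rmk: ball-cardinalty-with-mixed-identifications}.

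\textbf{Lower bound.} Given $r$ large, let $m$ be the largest integer with $m \cdot 2^m \le r$. Lemma~\ref{lem:proj-lower-8^m} then gives
\[ \#B(x, r) \ge \#B(x, m \cdot 2^m) \ge 8^m. \]
It remains to show $8^m \ge c (r/\log r)^3$. From $m \cdot 2^m \le r$ we get $m \le \log_2 r$. From the maximality of $m$, we have $(m+1) \cdot 2^{m+1} > r$, hence $2^m > r/\bigl(2(m+1)\bigr) \ge r/(4 \log_2 r)$ once $r$ is large enough that $m+1 \le 2\log_2 r$. Cubing yields
\[ 8^m > \frac{r^3}{64 (\log_2 r)^3}, \]
which is the required lower bound (after absorbing $\log 2$ into the constant $c$).

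\textbf{Main obstacle.} There is no serious obstacle here; both bounds are straightforward consequences of what has been established. The one bookkeeping point worth checking carefully is that the auxiliary machinery (Lemmas~\ref{lem:cannot-cross-V_m-quickly}--\ref{lem: counting neighbors}) really is independent of the identification type, which amounts to observing that none of those arguments exploits the orientation with which $m$-edges are glued---only the combinatorial fact that each side of $V_m$ is identified to exactly one other side. The $\log r$ loss in the lower bound is an unavoidable artifact of the recursion~\eqref{eq:proj-lower:R_m-defn}, which produces $R_m$ of order $m \cdot 2^m$ rather than $2^m$; closing the gap to a clean $r^3$ lower bound would require a genuinely better projective path construction and is left open.
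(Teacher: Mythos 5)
Your proposal is correct and follows essentially the same route as the paper: the upper bound is inherited unchanged from the identification-agnostic argument of Theorem~\ref{thm:full-bound-torus-klein} (as noted in Remark~\ref{rmk: ball-cardinalty-with-mixed-identifications}), and the lower bound calibrates $m$ against $r$ via $m\cdot 2^m \le r < (m+1)\cdot 2^{m+1}$ and applies Lemma~\ref{lem:proj-lower-8^m}, yielding the same constant $(\log 2)^3/64$ the paper obtains. No gaps.
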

\begin{proof}
The upper bound holds as mentioned above, so it suffices to consider the lower bound.
Choose $m \cdot 2^m < r \le (m+1) \cdot 2^{m+1}$; then
\[ \log m + m \log 2 < \log r, \]
in which case $m \log 2 < \log r$. Then, for sufficiently large $r$,
\[ m + 1 < 2 \cdot \frac{\log r}{\log 2}. \]
Using this to substitute for $\log r$ along with Lemma~\ref{lem:proj-lower-8^m}, we find
\begin{align*}
	\frac{\# B(x, r)}{\left( r / \log r \right)^3}
	&\ge \frac{8^m \cdot (\log r)^3}{\left( (m+1) \cdot 2^{m+1} \right)^3} \\
	&\ge \frac{8^m \cdot (m+1)^3 \left(\log 2 / 2 \right)^3}{\left( (m+1) \cdot 2^{m+1} \right)^3} \\
	&= \frac{8^{m-1} \cdot (\log 2)^3}{8^{m+1}} \\
	&= \frac{(\log 2)^3}{64}.
	\qedhere
\end{align*}
\end{proof}


\subsection{The Cardinality Ratio}
Let us briefly consider the cardinality ratio $\#B(x, r) / r^3$. We show how this ratio behaves with each identification type around two sample points in Figure~\ref{fig:cardinality-ratio-plots}.
\begin{figure}
	\centering
	\newcommand{\picturewidth}{8cm}
	\makebox[\linewidth]{
		\includegraphics[width=\picturewidth]{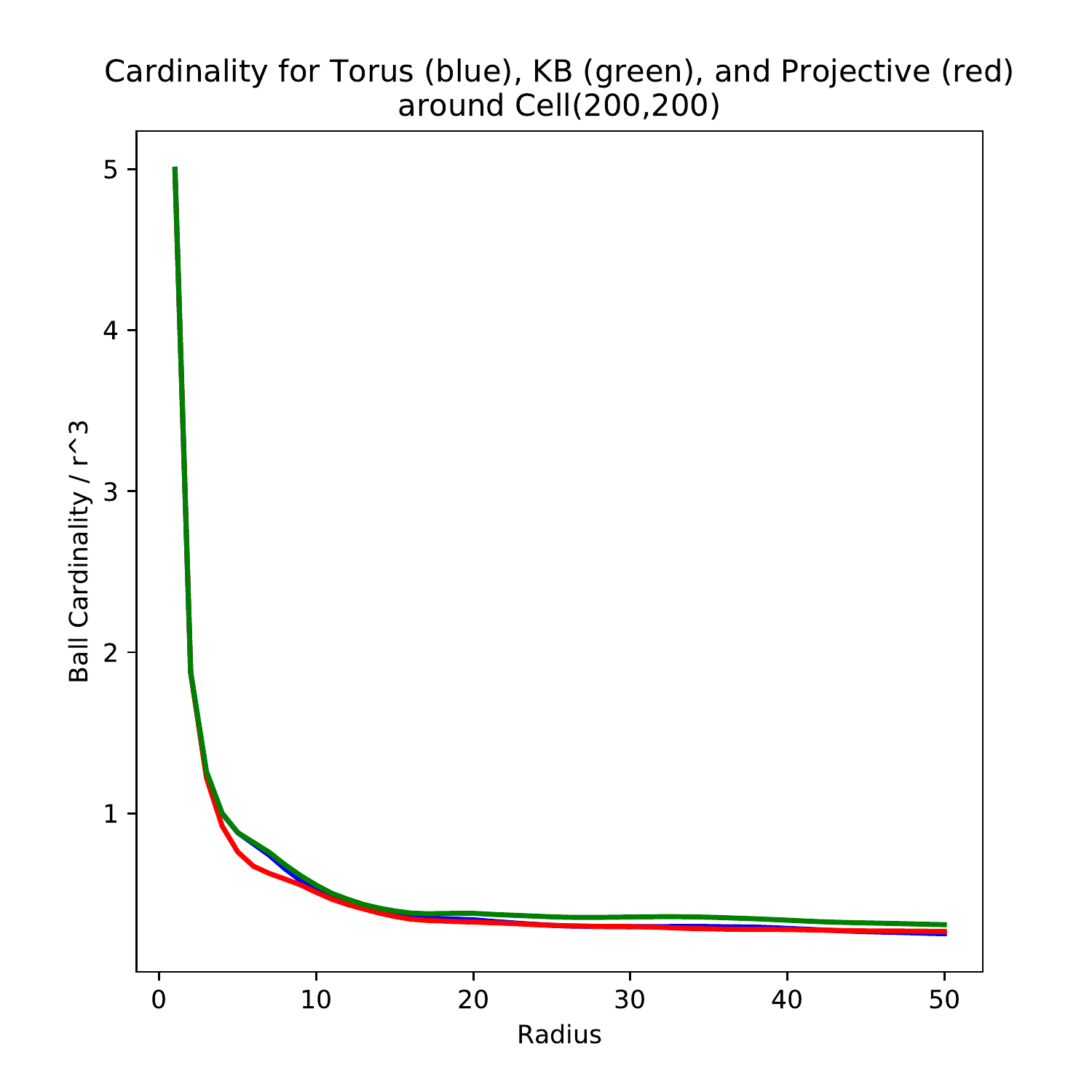}
		\includegraphics[width=\picturewidth]{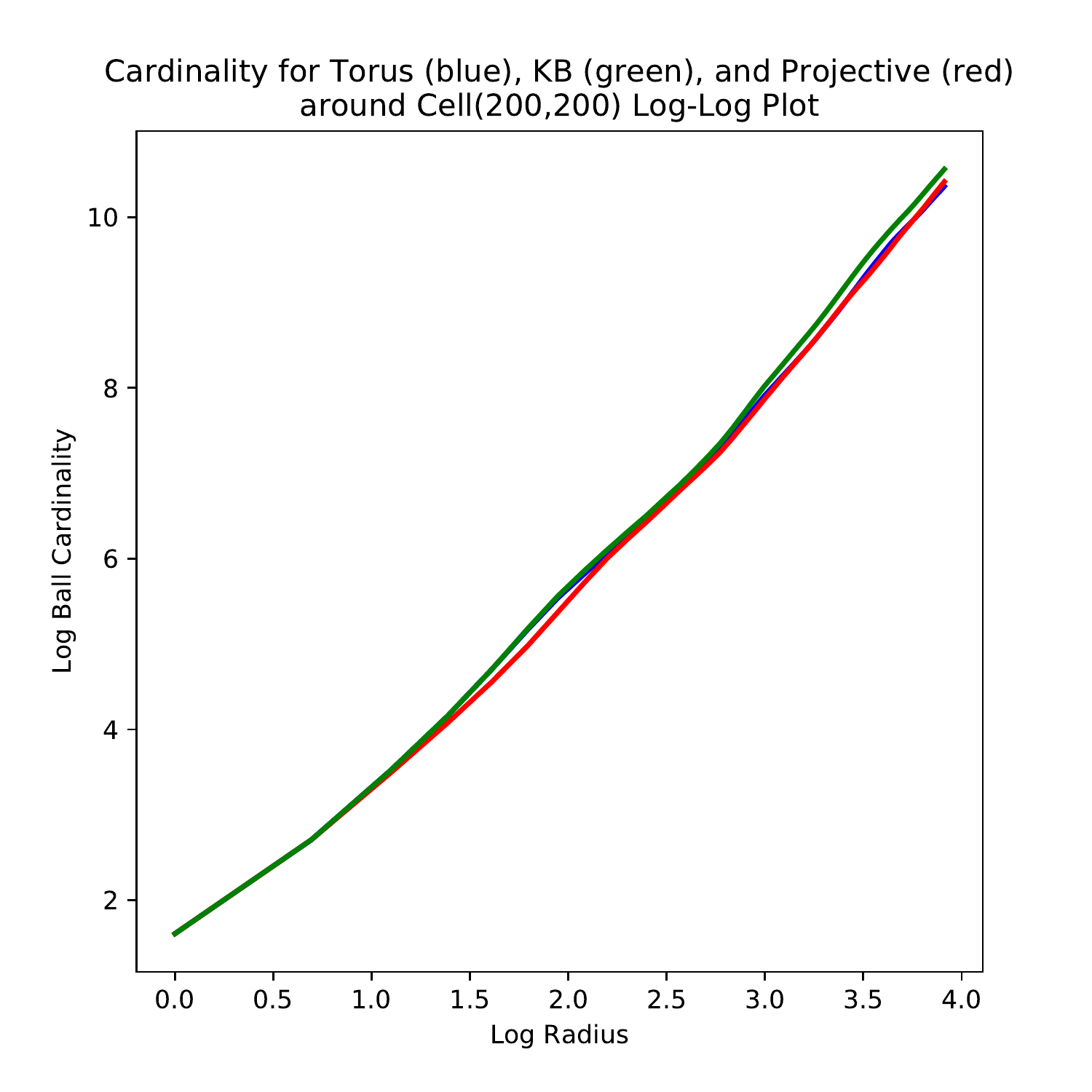}
	}
	\makebox[\linewidth]{
		\includegraphics[width=\picturewidth]{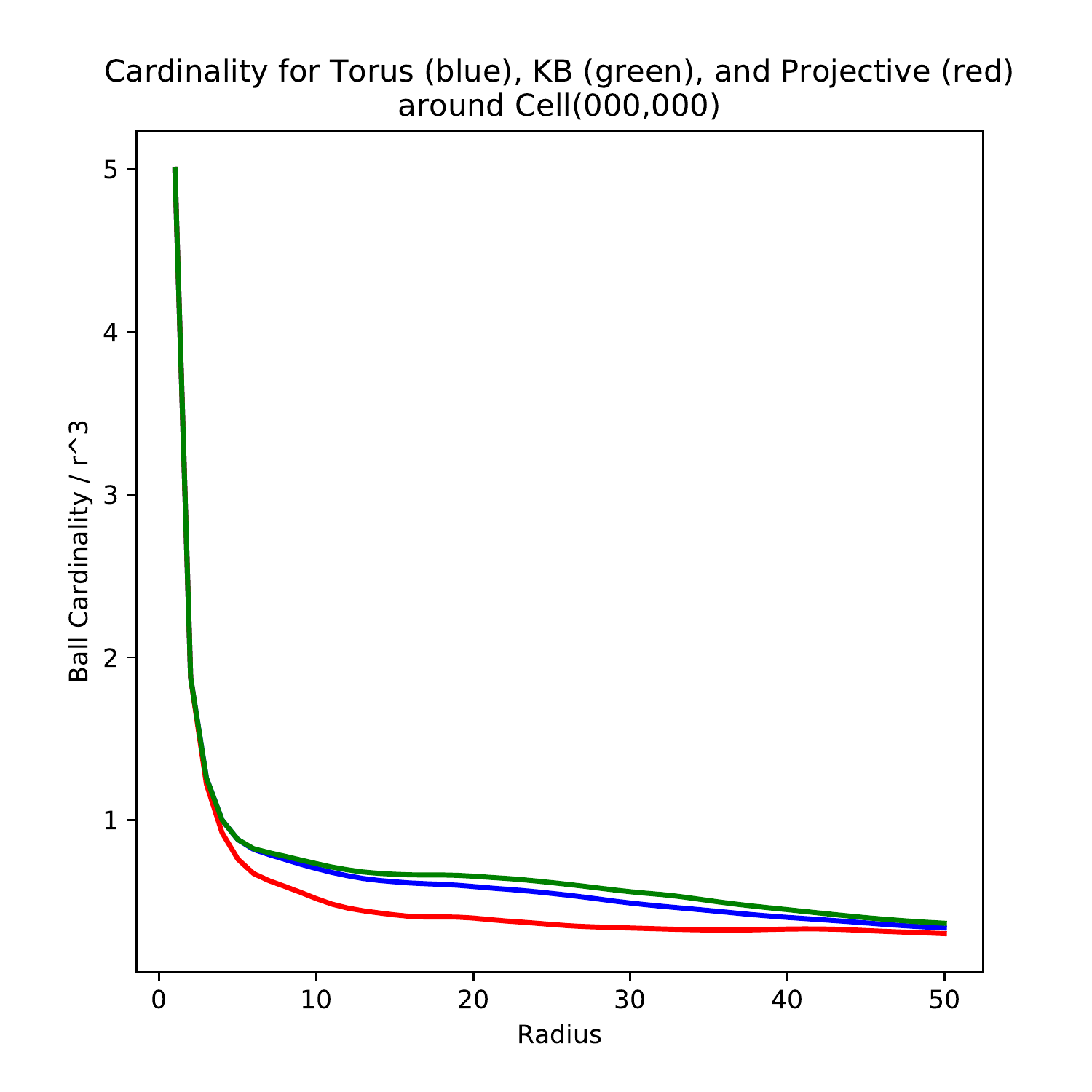}
		\includegraphics[width=\picturewidth]{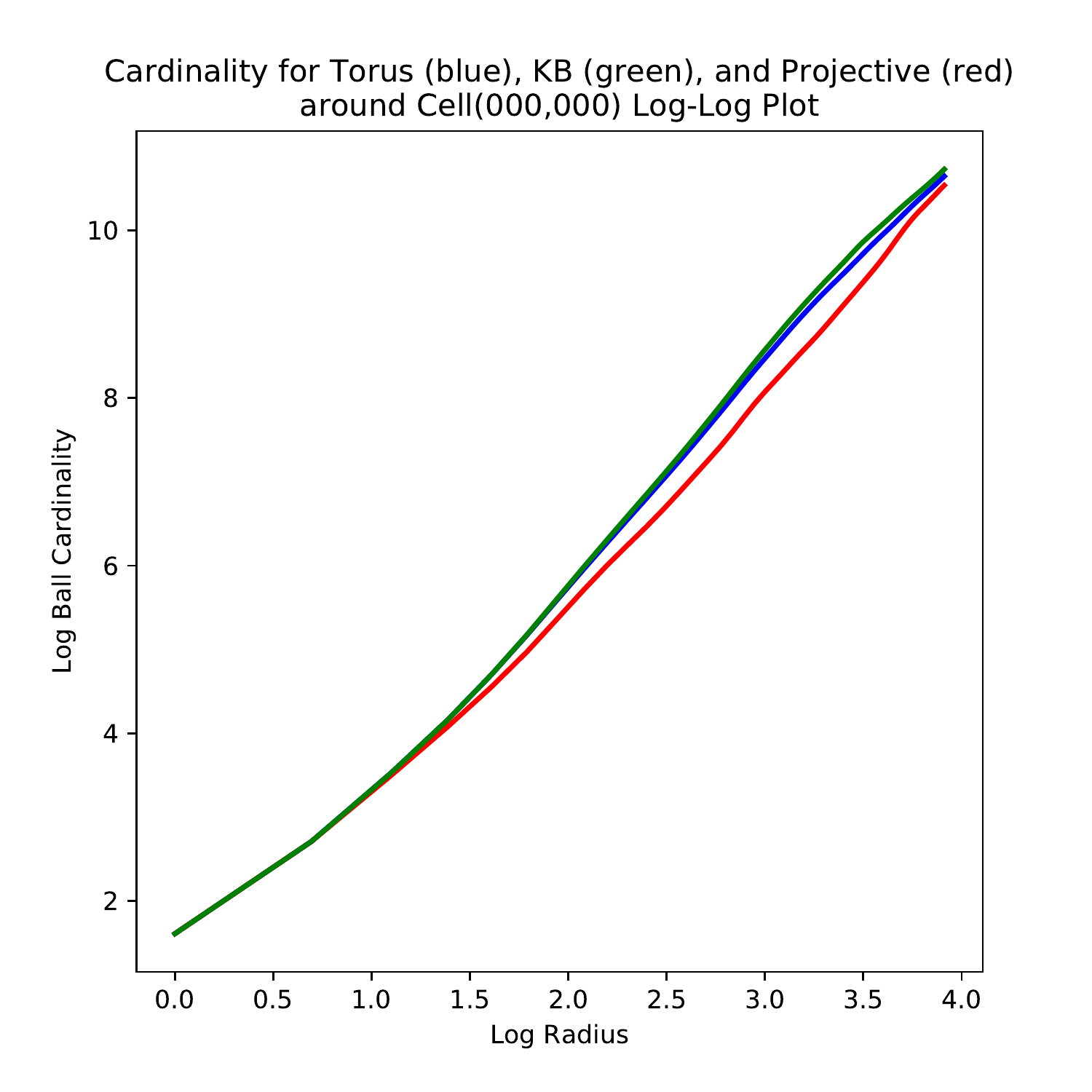}
	}
	\let \picturewidth \relax
	\caption{The ball cardinality ratio $\#B(x,r)/r^3$ around two sample points (left), each beside a corresponding $\log \#B(x,r)$-to-$\log r$ plot (right).}
	\label{fig:cardinality-ratio-plots}
\end{figure}
The plots suggest this ratio may converge, although we do not have proof of this. Of course, this conjecture is less certain for projective identifications, where even the $\sim r^3$ growth rate is unknown.  We can say, however, that if this ratio converges, then it will converge consistently over all cells.
\begin{prop}
	Fix any identification type and any two cells $x$ and $y$. If
	\[ \lim_{r \to \infty} \frac{\# B(x, r)}{r^3} = c,
			\qqt{then also}
			\lim_{r \to \infty} \frac{\# B(y, r)}{r^3} = c. \]
\end{prop}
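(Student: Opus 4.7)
The plan is to prove this by a simple sandwich argument based on the triangle inequality for the graph metric $d$. Let $d = d(x,y)$ be fixed. I would first establish the basic inclusion
\[ B(y, r-d) \subseteq B(x, r) \subseteq B(y, r+d), \]
valid for all $r > d$. The first inclusion follows because $d(z,y) < r-d$ implies $d(z,x) \le d(z,y) + d(x,y) < r$, and the second is symmetric. Counting cardinalities gives
\[ \#B(y, r-d) \le \#B(x, r) \le \#B(y, r+d). \]

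Next I would use these inequalities to sandwich $\#B(y,s)/s^3$. Setting $s = r-d$ in the left inequality yields $\#B(y,s) \le \#B(x, s+d)$, so
\[ \frac{\#B(y, s)}{s^3} \le \frac{\#B(x, s+d)}{(s+d)^3} \cdot \frac{(s+d)^3}{s^3}. \]
Taking $s \to \infty$, the first factor tends to $c$ by hypothesis and the second to $1$, giving $\limsup_{s \to \infty} \#B(y,s)/s^3 \le c$. Setting $s = r+d$ in the right inequality yields $\#B(x, s-d) \le \#B(y,s)$, so
\[ \frac{\#B(y, s)}{s^3} \ge \frac{\#B(x, s-d)}{(s-d)^3} \cdot \frac{(s-d)^3}{s^3}, \]
and the same limit argument gives $\liminf_{s \to \infty} \#B(y,s)/s^3 \ge c$.

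There is no real obstacle here: the argument uses only the triangle inequality and the elementary fact that $(s \pm d)^3/s^3 \to 1$ as $s \to \infty$, with $d$ fixed. The proof does not depend on the identification type, so the same proof would in fact show that \emph{any} growth rate of the form $\#B(x,r) \sim f(r)$ with $f(r+d)/f(r) \to 1$ (a weak regularity condition, satisfied by all polynomial growth rates) is independent of the base cell. In particular, the analogous statement for the $\bigl(r/\log r\bigr)^3$ rate conjectured in the projective case would follow by exactly the same proof.
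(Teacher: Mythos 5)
Your proof is correct and takes essentially the same route as the paper's: a triangle-inequality sandwich of balls around the two cells combined with the observation that $(s \pm d)^3/s^3 \to 1$ as $s \to \infty$. If anything, your use of $\limsup$ and $\liminf$ is slightly more careful than the paper's phrasing, which writes the limit of $\# B(y,r)/r^3$ before its existence has been established.
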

\begin{proof}
For large $r$, the triangle inequality provides $\# B(x, r - d(x,y)) \le \# B(y, r) \le \# B(x, r + d(x,y))$, from which
\[ \lim_{r \to \infty} \frac{\# B(x, r-d(x,y))}{r^3}
		\le \lim_{r \to \infty} \frac{\# B(y, r)}{r^3}
		\le \lim_{r \to \infty} \frac{\# B(x, r+d(x,y))}{r^3}. \]
Our assumption $\#B(x,r)/r^3 \to c$ may be used to compute these bounds. For the lower bound,
\[ \lim_{r \to \infty} \frac{\# B(x, r-d(x,y))}{r^3} = \lim_{r \to \infty} \frac{\# B(x, r-d(x,y))}{(r - d(x,y))^3} \cdot \frac{(r - d(x,y))^3}{r^3} = c, \]
and similarly for the upper bound.
\end{proof}


\section{Random Walks}
\label{sec: random walks}

In this section, we present data of our computer simulation concerning random walk on the $IMC$ and the effective resistance from a fixed point to the boundaries of large squares.

The random walk simulation was carried out on the $IMC$ obtained by applying the repeated application of the inverses of the contractions that fix two opposite vertices. The starting points were chosen to be the cell whose lower left hand corner is $(0, 0)$ before identification. Only the simple symmetric random walk was considered, \textit{i.e.}, the random walker has equal probability, $1/4$, of moving upwards, downwards, to the left and to the right. A trial terminates either when the random walker has returned to the starting point, in which case the walk is said to be empirically recurrent, or when the walker has walked a prescribed number of steps, the maximum length. If a trial is not empirically recurrent, it is said to be empirically transient. The length of a trial is the number of steps the walker has walked when the trial terminates.

We note that in each simulation, roughly $2/3$ of all trials are empirically recurrent. Even though the computations are not conclusive, they suggest the walk is transient. The results of the simulations are summarized in Table~\ref{table:random-walk-simulations}.
\begin{table}[b]
	\centering
	\begin{tabular}{| r r r c |}
		\hline
		identification & no.\ of trials &	max.\ length & \parbox{4.5cm}{\centering number (percentage) of empirically recurrent trials} \\ \hline
		torus & 2\,000 & 500\,000 & 1\,348 (67.4\%) \\
		torus & 500 & 1\,000\,000 & 331 (66.1\%) \\
		$\dagger$ \hfill torus & 2\,000 & 10\,000\,000 & 1\,390 (69.5\%) \\
		$\ddagger$ \hfill torus & 200 & 100\,000\,000 & 137 (68.5\%) \\
		Klein bottle & 1\,000 & 500\,000 & 667 (66.7\%) \\
		projective plane & 1\,000 & 500\,000 & 683 (68.3\%) \\
		\hline
	\end{tabular}
	\tablespacing
	\caption{Results of random walk simulations on infinite magic carpets.}
	\label{table:random-walk-simulations}
\end{table}

Concerning the lengths of empirically recurrent trials, we note that most of them are very short, but the frequency graphs in Figure~\ref{fig:recurrent-walk-length-histograms} have long tails. No power law was observed. More details can be found on the website~\cite{website}.

\begin{figure}
	\centering
	\newcommand{\picturewidth}{9cm}
	\makebox[\linewidth]{
		\includegraphics[width=\picturewidth]{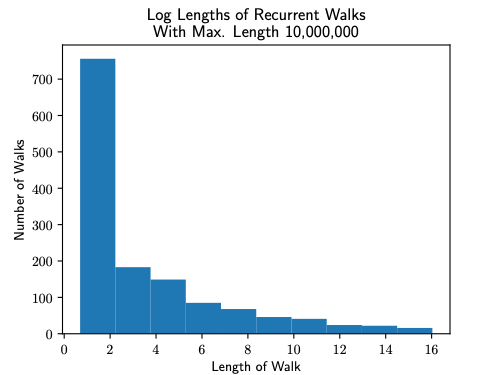}
		\includegraphics[width=\picturewidth]{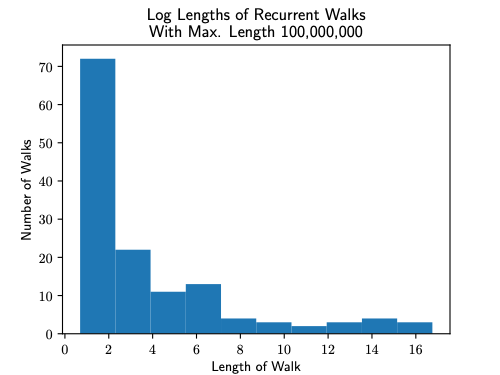}
	}
	\caption{(Left) Log lengths of random walks that returned for the simulations with maximum length set to 10,000,000, \textit{i.e.}, row $\dagger$ in Table~\ref{table:random-walk-simulations}. (Right) Similar, but for maximum length 100,000,000, \textit{i.e.}, row $\ddagger$ in Table~\ref{table:random-walk-simulations}.}
	\label{fig:recurrent-walk-length-histograms}
	\let \picturewidth \relax
\end{figure}

As for the effective resistance, the resistance from each cell to the outermost square boundary of the $m$th approximation of the $IMC$ with torus identification was computed for $m = 2, 3, 4$. High computation cost rendered direct computation impractical for larger $m$. Instead, a number of cells in the 5th approximation are randomly selected to compute their resistances. The resistance of a cell to the boundary is computed by solving the harmonic equation with the value at the cell fixed to be one and those at the boundary cells fixed to be zero. If the random walk is recurrent, the resistances should remain bounded across all $m$'s, as in the case for the random walk on $\ZZ^2$ (\textit{cf.}~\cite{doyle-snell}).

The results are summarized in Table~\ref{table:max-resistance} and Figure~\ref{fig:torus-resistance}. The resistances of squares in $\ZZ^2$ are included in Figure~\ref{fig:Z2-resistance} for comparison.

\begin{table}[h]
	\centering
	\begin{tabular}{| r c |}
		\hline
		$m$ & \parbox{3.5cm}{\centering{} max.\ resistance on $m$th approximation} \\ \hline
		2 & 0.385 \\
		3 & 0.521 \\
		4 & 0.629 \\
		\hline
	\end{tabular}
	\tablespacing
	\caption{Maximum Resistances of Cells on the $m$th Approximation of the IMC}
	\label{table:max-resistance}
\end{table}

\begin{figure}[h]
	\centering
	\newcommand{\picturewidth}{8cm}
	\makebox[\linewidth]{
		\includegraphics[width=\picturewidth]{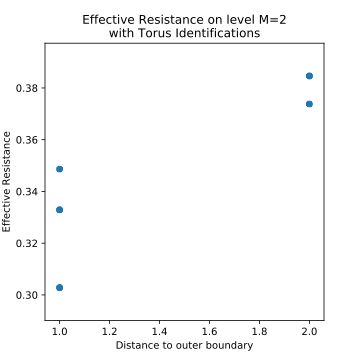}
		\includegraphics[width=\picturewidth]{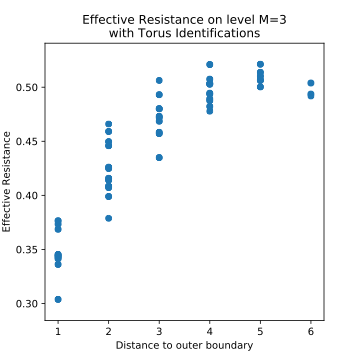}
	}
	\makebox[\linewidth]{
		\includegraphics[width=\picturewidth]{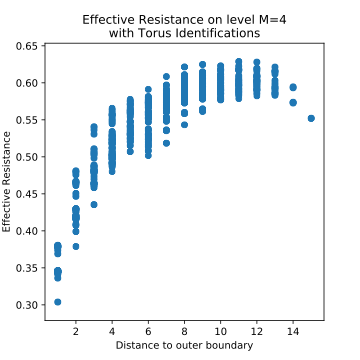}
		\includegraphics[width=\picturewidth]{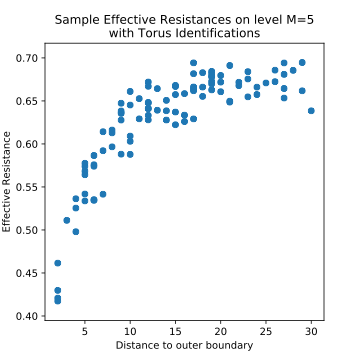}
	}
	\let \picturewidth \relax
	\caption{Effective resistances with torus identifications, at levels $M = 2$ (top left), 3 (top right), 4 (bottom left). For level 5 (bottom right), only some sample points are shown.}
	\label{fig:torus-resistance}
\end{figure}

\begin{figure}[h]
	\centering
	\newcommand{\picturewidth}{8cm}
	\makebox[\linewidth]{
		\includegraphics[width=\picturewidth]{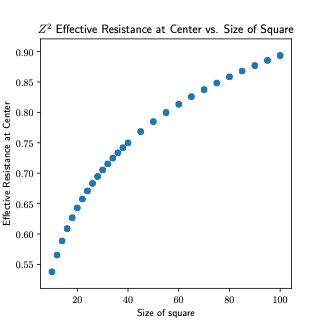}
		\includegraphics[width=\picturewidth]{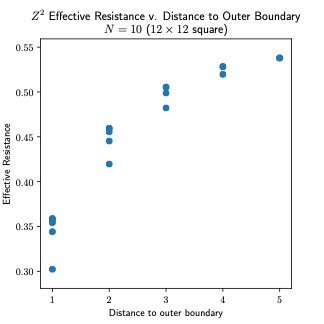}
	}
	\makebox[\linewidth]{
		\includegraphics[width=\picturewidth]{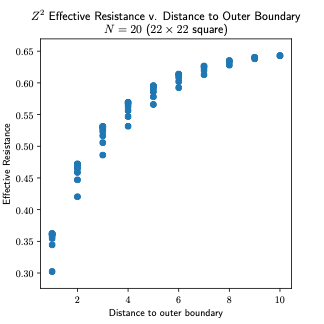}
		\includegraphics[width=\picturewidth]{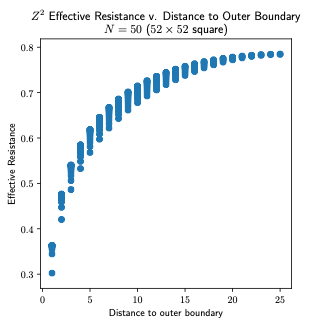}
	}
	\let \picturewidth \relax
	\caption{Effective resistances on the $\ZZ^2$ lattice}
	\label{fig:Z2-resistance}
\end{figure}

As shown in Figure~\ref{fig:torus-resistance}, the resistances follow a hill-shaped trend as the distance from the boundary varies. Unlike the case for $\ZZ^2$, in Figure~\ref{fig:Z2-resistance}, the maximum resistance for each distance does not increase monotonically as the cell moves away from the boundary, but peaks at around $2/3$ of the maximum distance. Since only data for $m = 2, 3, 4$ are gathered, it is difficult to infer the behavior of the resistances for larger $m$, and hence the nature of the random walk on the $IMC$.


\section{Spectrum of the Laplacian on \textit{IMC}}
\label{sec: spectrum on IMC}

For each of the identification types, we would like to speculate on the structure of the spectrum of the Laplacian on $IMC$ by doing calculations on $\tilde{MC}_m$ for $m \le 4$. Suppose, for example, that there were a square summable eigenfunction $\phi(x)$. Then it would have to vanish as $x \to \infty$. In particular, on $\tilde{MC}_m$ for large enough $m$ it would have to be very close to zero on a neighborhood of the boundary squares. It would also have to be close to a Dirichlet eigenfunction (one that vanishes on the boundary). So we compute all of the Dirichlet eigenfunctions and examine them to see if they are close to zero in a neighborhood of the boundary. We show some samples from the first~150 in Figure~\ref{fig:dirichlet-eigenfunctions-torus}.
\begin{figure}
	\newcommand{\picturewidth}{5cm}
	\makebox[\linewidth]{
		\includegraphics[width=\picturewidth, clip=true, trim=0.5cm 1cm 0.5cm 0cm]{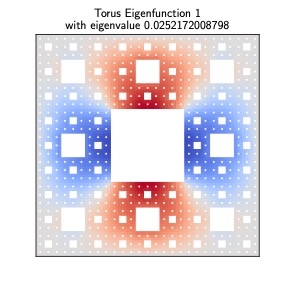}
		\includegraphics[width=\picturewidth, clip=true, trim=0.5cm 1cm 0.5cm 0cm]{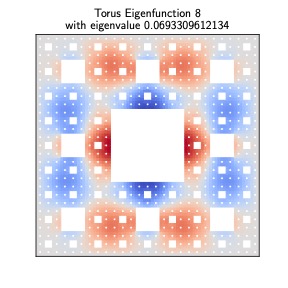}
		\includegraphics[width=\picturewidth, clip=true, trim=0.5cm 1cm 0.5cm 0cm]{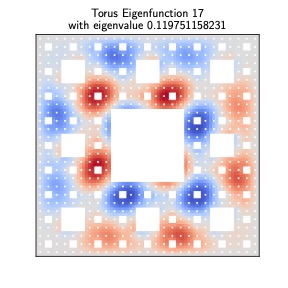}
	}
	\makebox[\linewidth]{
		\includegraphics[width=\picturewidth, clip=true, trim=0.5cm 1cm 0.5cm 0cm]{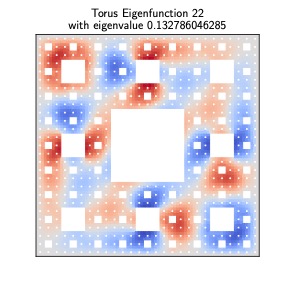}
		\includegraphics[width=\picturewidth, clip=true, trim=0.5cm 1cm 0.5cm 0cm]{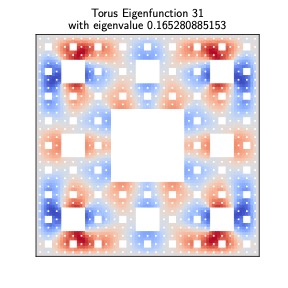}
		\includegraphics[width=\picturewidth, clip=true, trim=0.5cm 1cm 0.5cm 0cm]{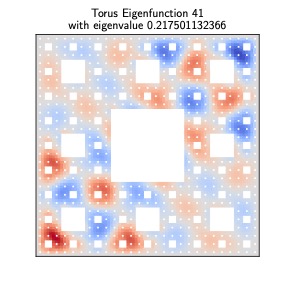}
	}
	\makebox[\linewidth]{
		\includegraphics[width=\picturewidth, clip=true, trim=0.5cm 1cm 0.5cm 0cm]{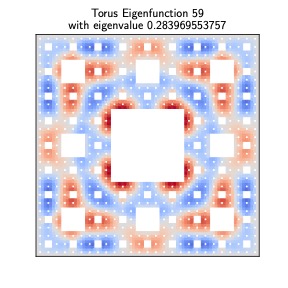}
		\includegraphics[width=\picturewidth, clip=true, trim=0.5cm 1cm 0.5cm 0cm]{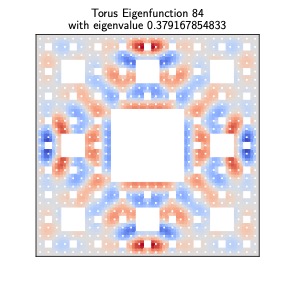}
		\includegraphics[width=\picturewidth, clip=true, trim=0.5cm 1cm 0.5cm 0cm]{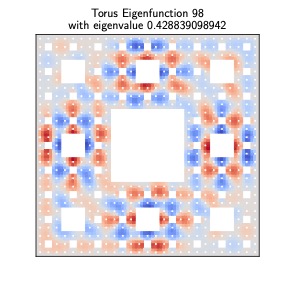}
	}
	\makebox[\linewidth]{
		\includegraphics[width=\picturewidth, clip=true, trim=0.5cm 1cm 0.5cm 0cm]{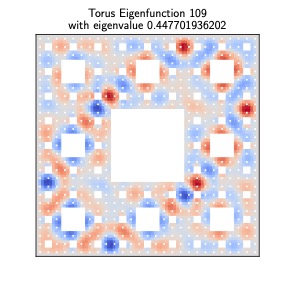}
		\includegraphics[width=\picturewidth, clip=true, trim=0.5cm 1cm 0.5cm 0cm]{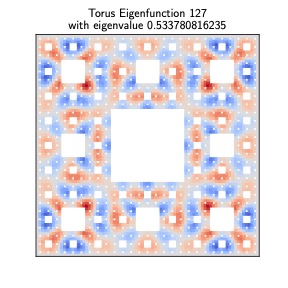}
		\includegraphics[width=\picturewidth, clip=true, trim=0.5cm 1cm 0.5cm 0cm]{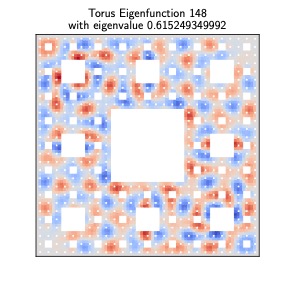}
	}
	\let \picturewidth \relax
	\caption{Some Dirichlet eigenfunctions with torus identifications.}
	\label{fig:dirichlet-eigenfunctions-torus}
\end{figure}
Many more (the first~150 for each identification type) can be found on the website~\cite{website}. None of the first~150 appears to have this decay property. We take this as evidence that the spectrum of the Laplacian on $IMC$ is entirely continuous. Of course, we were limited by our computational resources to $m \le 4$, so it is conceivable, although unlikely, that a discrete spectrum only makes an appearance at larger values of $m$. In Section~\ref{sec: uniform} we will construct a countable family of bounded periodic eigenfunctions on $IMC$. In Section~\ref{sec: spectral resolution} we conjecture that these provide the spectral resolution of the Laplacian on $IMC$ with a purely continuous spectrum.

Our Dirichlet eigenfunctions and corresponding tables of values have no relationship to the $IMC$ spectrum, but we will use them in Section~\ref{sec: heat kernel} to approximate the heat kernel on $IMC$.


\section{The Heat Kernel on \textit{IMC}}
\label{sec: heat kernel}

As mentioned in the introduction, we have computed the heat kernel on $MC_m$ for $m = 4$. For points $x, y$ far from the boundary and moderate $t$, we expect this to be a good approximation to the heat kernel on $IMC$. It is of course interesting to understand the behavior of the heat kernel on $IMC$ for large values of $t$, but we are limited by our computational resources to get a handle on this question. Complete data is available on the website~\cite{website}.

The first question we consider is the on-diagonal behavior, $H_t(x,x)$. From other fractal models we were led to expect a power law behavior \cite{Barlow98}, but instead found that power varies with the point. In Figure~\ref{fig:heat-kernel-diagonal-log-log} we show the graph of $H_t(x,x)$ as a function of $t$ for a small sample of points $x$ on a log-log scale. Here and elsewhere we focus mainly on the cells $x$ bordering the largest removed square, since these are relatively far from the outer boundary. We take the approximate slope of the portion of the graph that appears close to linear to estimate $-\beta(x)$. In Table~\ref{table:heat-kernel-dirichlet-diagonal-slope-lists} we list these values for the aforementioned cells.
\begin{figure}
	\centering
	\newcommand{\picturewidth}{9cm}
	\makebox[\linewidth]{
		\includegraphics[width=\picturewidth, clip=true, trim=0cm 0cm 1cm 0cm]{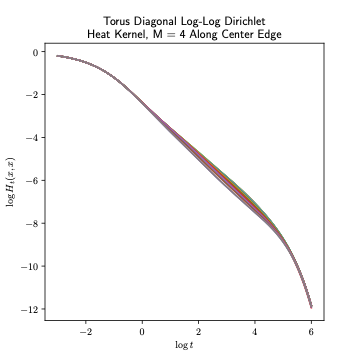}
		\includegraphics[width=\picturewidth, clip=true, trim=0cm 0cm 1cm 0cm]{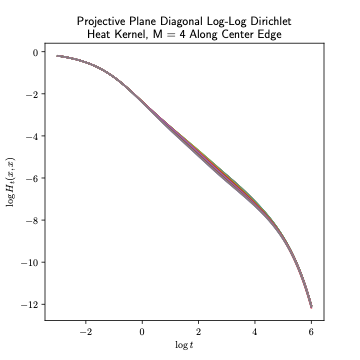}
	}
	\let \picturewidth \relax
	\caption{Log-log plot of the diagonal Dirichlet heat kernel for torus and projective identifications. (Klein identifications are similar.) Each plot is for a cell $x$ bordering the interior square.}
	\label{fig:heat-kernel-diagonal-log-log}
\end{figure}
\begin{table}
	\makebox[\linewidth]{
		\tt
		\begin{tabular}{| lcccc | lcc |}
		\hline
		\multicolumn{8}{| c |}{\rm \bf Slope According to Identification Type} \\ \hline
		{} & {\rm Cell} & {\rm Torus} & {\rm Projective} & {\rm Klein} & {} & {\rm Cell} & {\rm Klein} \\ \hline
		0  & (1000,0222) & -1.301726 & -1.263855 & -1.304539 & 28 & (2000,1000) & -1.304547 \\
		1  & (1001,0222) & -1.253730 & -1.239121 & -1.257284 & 29 & (2000,1001) & -1.256358 \\
		2  & (1002,0222) & -1.221663 & -1.216606 & -1.225863 & 30 & (2000,1002) & -1.223405 \\
		3  & (1010,0222) & -1.202318 & -1.202270 & -1.206413 & 31 & (2000,1010) & -1.203137 \\
		4  & (1011,0222) & -1.198272 & -1.199859 & -1.201669 & 32 & (2000,1011) & -1.198634 \\
		5  & (1012,0222) & -1.183683 & -1.187111 & -1.187614 & 33 & (2000,1012) & -1.184506 \\
		6  & (1020,0222) & -1.174705 & -1.179356 & -1.178552 & 34 & (2000,1020) & -1.176441 \\
		7  & (1021,0222) & -1.171706 & -1.176650 & -1.174911 & 35 & (2000,1021) & -1.174205 \\
		8  & (1022,0222) & -1.164308 & -1.168956 & -1.166984 & 36 & (2000,1022) & -1.166854 \\
		9  & (1100,0222) & -1.163814 & -1.168638 & -1.166578 & 37 & (2000,1100) & -1.166354 \\ 
		10 & (1101,0222) & -1.170175 & -1.175674 & -1.173650 & 38 & (2000,1101) & -1.172659 \\
		11 & (1102,0222) & -1.171464 & -1.177031 & -1.175646 & 39 & (2000,1102) & -1.173191 \\
		12 & (1110,0222) & -1.177146 & -1.182105 & -1.181406 & 40 & (2000,1110) & -1.177974 \\
		13 & (1111,0222) & -1.186926 & -1.190956 & -1.190607 & 41 & (2000,1111) & -1.187285 \\
		14 & (1112,0222) & -1.177146 & -1.182105 & -1.181406 & 42 & (2000,1112) & -1.177974 \\
		15 & (1120,0222) & -1.171464 & -1.177031 & -1.175646 & 43 & (2000,1120) & -1.173191 \\
		16 & (1121,0222) & -1.170175 & -1.175674 & -1.173650 & 44 & (2000,1121) & -1.172659 \\
		17 & (1122,0222) & -1.163814 & -1.168638 & -1.166578 & 45 & (2000,1122) & -1.166354 \\
		18 & (1200,0222) & -1.164308 & -1.168956 & -1.166984 & 46 & (2000,1200) & -1.166854 \\
		19 & (1201,0222) & -1.171706 & -1.176650 & -1.174911 & 47 & (2000,1201) & -1.174205 \\
		20 & (1202,0222) & -1.174705 & -1.179356 & -1.178552 & 48 & (2000,1202) & -1.176441 \\
		21 & (1210,0222) & -1.183683 & -1.187111 & -1.187614 & 49 & (2000,1210) & -1.184506 \\
		22 & (1211,0222) & -1.198272 & -1.199859 & -1.201669 & 50 & (2000,1211) & -1.198634 \\
		23 & (1212,0222) & -1.202318 & -1.202270 & -1.206413 & 51 & (2000,1212) & -1.203137 \\
		24 & (1220,0222) & -1.221663 & -1.216606 & -1.225863 & 52 & (2000,1220) & -1.223405 \\
		25 & (1221,0222) & -1.253730 & -1.239121 & -1.257284 & 53 & (2000,1221) & -1.256358 \\
		26 & (1222,0222) & -1.301726 & -1.263855 & -1.304539 & 54 & (2000,1222) & -1.304547 \\
		27 & (2000,0222) & -1.301722 & -1.263856 & -1.304540 & 55 & (2000,2000) & -1.304540 \\
		\hline
		\end{tabular}
	} \\[0.2cm]
	\caption{Approximate slopes of the log-log Dirichlet heat kernel, calculated along the region $-1 \le \log t \le 3$. (Note that the heat kernel is nonlinear.) The cells listed are those along the edge of the central, removed square. There are twice as many for Klein horizontal identifications because of the asymmetry in that case.}
	\label{table:heat-kernel-dirichlet-diagonal-slope-lists}
\end{table}
\begin{figure}
	\centering
	\newcommand{\picturewidth}{7.5cm}
	\makebox[\linewidth]{
		\includegraphics[width=\picturewidth, clip=true, trim=0cm 0cm 0.5cm 0cm]{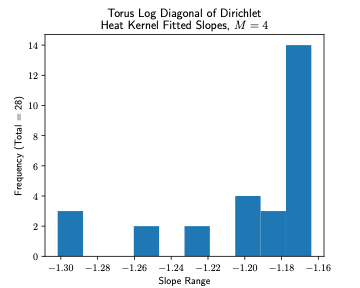}
		\includegraphics[width=\picturewidth, clip=true, trim=0cm 0cm 0.5cm 0cm]{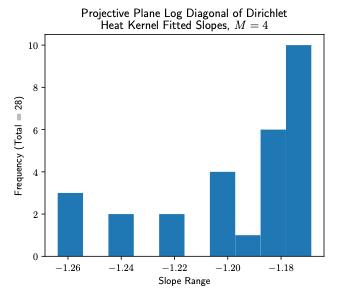}
	}
	\includegraphics[width=\picturewidth, clip=true, trim=0cm 0cm 0.5cm 0cm]{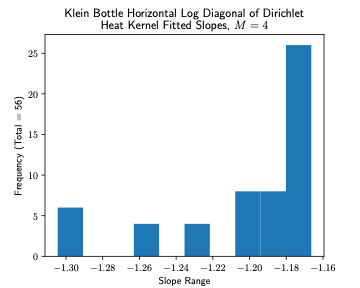}
	\let \picturewidth \relax
	\caption{Histograms for slope values of Table~\ref{table:heat-kernel-dirichlet-diagonal-slope-lists}.}
	\label{fig:heat-kernel-dirichlet-diagonal-slope-histograms}
\end{figure}
In Figure~\ref{fig:heat-kernel-dirichlet-diagonal-slope-histograms} we show histograms of these values. This supplies evidence that $IMC$ is very inhomogeneous. It is not clear whether or not the gaps in the histograms would persist if we were able to extend the computation to higher values of $m$.

Next we consider the off-diagonal behavior of the heat kernel. We fix $y$ and $t$, and examine the graph of $x \mapsto H_t(x, y)$. In Figure~\ref{fig:Dirichlet-heat-kernel-full-plots} we show some samples. As expected we see rapid decay as $x$ moves away from $y$. To see this more clearly we look at the restriction of $x$ to a line segment passing through $y$ in Figure~\ref{fig:Dirichlet-heat-kernel-line-2000-2000-horizontal}. We have also graphed scatter plots of the values of $H_t(x,y)$ for all $x$ of distance $k$ to $y$ as $k$ varies (again, log-log plots), as shown in Figure~\ref{fig:Dirichlet-heat-kernel-scatter-log-torus}. From this we obtain conjectural bounds
\[ c_1 e^{-c_2 d(x, y)^\gamma} \le H_t(x, y) \le c_1' e^{-c_2' d(x, y)^{\gamma'}}, \]
where $c_1, c_2, c_1', c_2'$ depend on $y$ and $t$ and
\[ a \le \gamma \le b \qqand a' \le \gamma' \le b'. \]

\newcommand{\picturewidth}{6cm}
\begin{figure}
	\centering
	
	\makebox[\linewidth]{
		\includegraphics[width=\picturewidth, clip=true, trim=0.5cm 0.5cm 0.25cm 0cm]{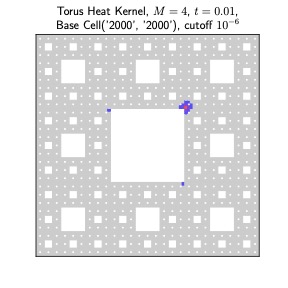}
		\includegraphics[width=\picturewidth, clip=true, trim=0.5cm 0.5cm 0.25cm 0cm]{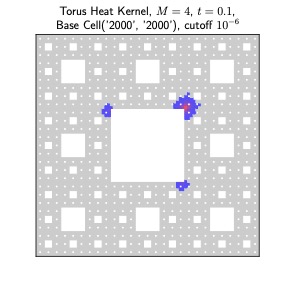}
		\includegraphics[width=\picturewidth, clip=true, trim=0.5cm 0.5cm 0.25cm 0cm]{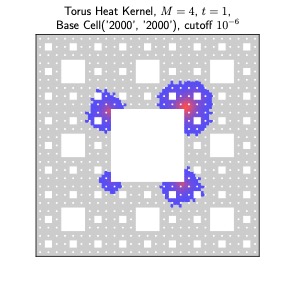}
	}
	
	\makebox[\linewidth]{
		\includegraphics[width=\picturewidth, clip=true, trim=0.5cm 0.5cm 0.25cm 0cm]{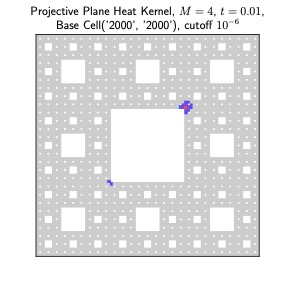}
		\includegraphics[width=\picturewidth, clip=true, trim=0.5cm 0.5cm 0.25cm 0cm]{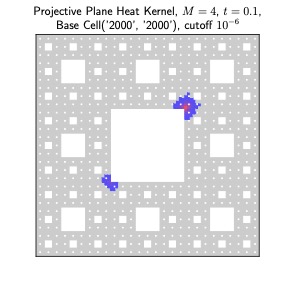}
		\includegraphics[width=\picturewidth, clip=true, trim=0.5cm 0.5cm 0.25cm 0cm]{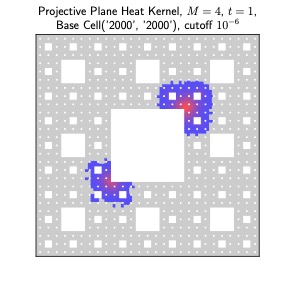}
	}
	
	\makebox[\linewidth]{
		\includegraphics[width=\picturewidth, clip=true, trim=0.5cm 0.5cm 0.25cm 0cm]{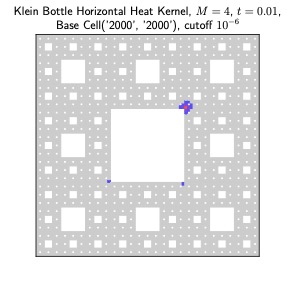}
		\includegraphics[width=\picturewidth, clip=true, trim=0.5cm 0.5cm 0.25cm 0cm]{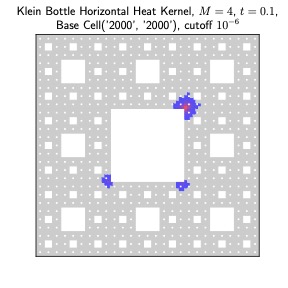}
		\includegraphics[width=\picturewidth, clip=true, trim=0.5cm 0.5cm 0.25cm 0cm]{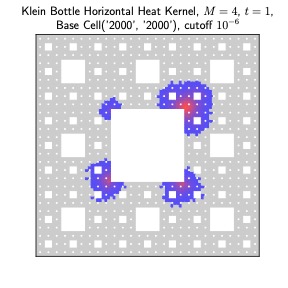}
	}
	\caption{The off-diagonal heat kernel for torus (top), projective, and Klein horizontal (bottom) identifications at times $t = 0.01$ (left), $0.1$, and $1$ (right). Values with magnitude $< 10^{-6}$ are gray.}
	\label{fig:Dirichlet-heat-kernel-full-plots}
\end{figure}
\let \picturewidth \relax
\newcommand{\picturewidth}{8cm}
\begin{figure}
	\centering
	\makebox[\linewidth]{
		\includegraphics[width=\picturewidth, clip=true, trim=0.5cm 0cm 1cm 0cm]{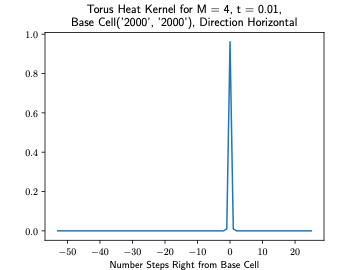}
		\includegraphics[width=\picturewidth, clip=true, trim=0.5cm 0cm 1cm 0cm]{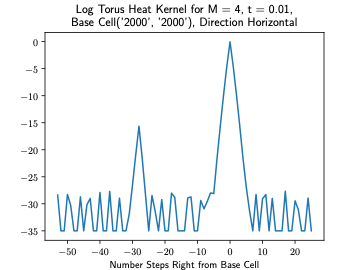}
	}
	\makebox[\linewidth]{
		\includegraphics[width=\picturewidth, clip=true, trim=0.5cm 0cm 1cm 0cm]{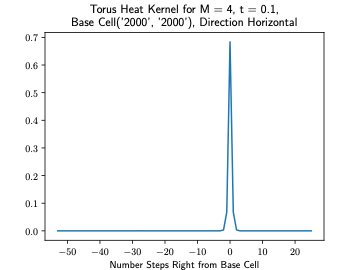}
		\includegraphics[width=\picturewidth, clip=true, trim=0.5cm 0cm 1cm 0cm]{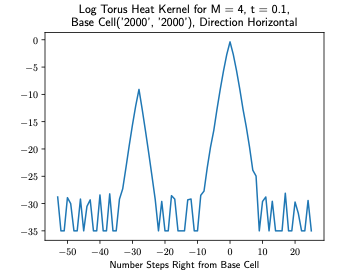}
	}
	\makebox[\linewidth]{
		\includegraphics[width=\picturewidth, clip=true, trim=0.5cm 0cm 1cm 0cm]{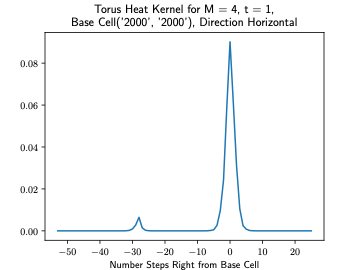}
		\includegraphics[width=\picturewidth, clip=true, trim=0.5cm 0cm 1cm 0cm]{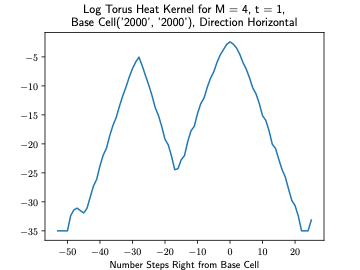}
	}
	\caption{With torus identifications, the Dirichlet heat kernel along a horizontal line (left) and its log (right) at times $t = 0.01$ (top), $0.1$, and $1$ (bottom). Log values $\le -30$ are partly numerical error.}
	\label{fig:Dirichlet-heat-kernel-line-2000-2000-horizontal}
\end{figure}
\let \picturewidth \relax
\newcommand{\picturewidth}{6cm}
\begin{figure}
	\centering
	\makebox[\linewidth]{
		\includegraphics[width=\picturewidth, clip=true, trim=0.5cm 0cm 0cm 0cm]{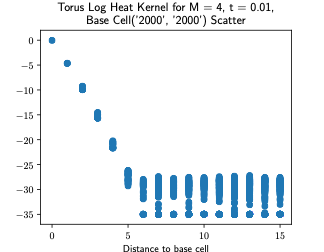}
		\includegraphics[width=\picturewidth, clip=true, trim=0.5cm 0cm 0cm 0cm]{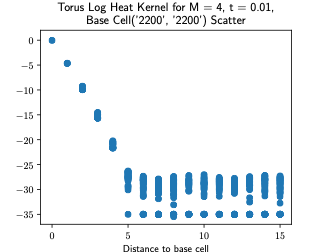}
		\includegraphics[width=\picturewidth, clip=true, trim=0.5cm 0cm 0cm 0cm]{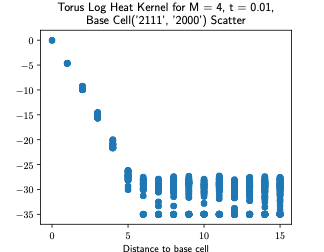}
	}
	
	\vspace{0.2cm}
	
	\makebox[\linewidth]{
		\includegraphics[width=\picturewidth, clip=true, trim=0.5cm 0cm 0cm 0cm]{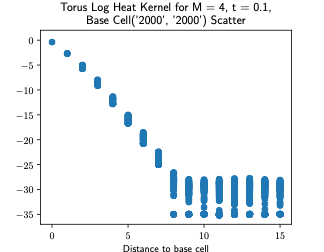}
		\includegraphics[width=\picturewidth, clip=true, trim=0.5cm 0cm 0cm 0cm]{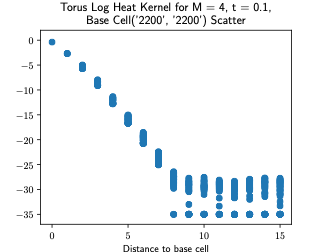}
		\includegraphics[width=\picturewidth, clip=true, trim=0.5cm 0cm 0cm 0cm]{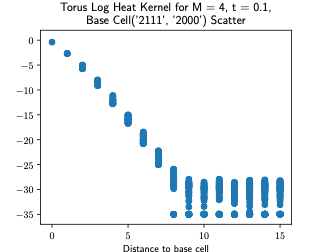}
	}
	
	\vspace{0.2cm}
	
	\makebox[\linewidth]{
		\includegraphics[width=\picturewidth, clip=true, trim=0.5cm 0cm 0cm 0cm]{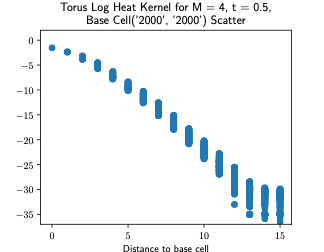}
		\includegraphics[width=\picturewidth, clip=true, trim=0.5cm 0cm 0cm 0cm]{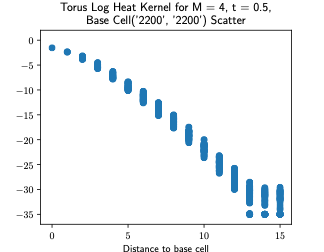}
		\includegraphics[width=\picturewidth, clip=true, trim=0.5cm 0cm 0cm 0cm]{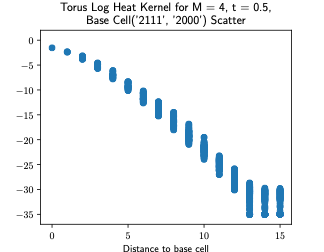}
	}
	
	\vspace{0.2cm}
	
	\makebox[\linewidth]{
		\includegraphics[width=\picturewidth, clip=true, trim=0.5cm 0cm 0cm 0cm]{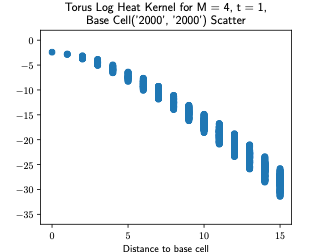}
		\includegraphics[width=\picturewidth, clip=true, trim=0.5cm 0cm 0cm 0cm]{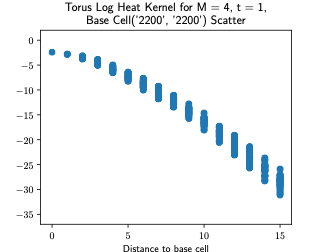}
		\includegraphics[width=\picturewidth, clip=true, trim=0.5cm 0cm 0cm 0cm]{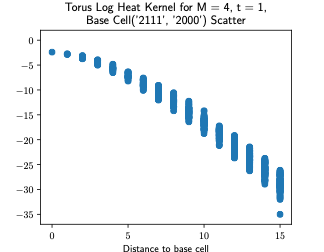}
	}
	\caption{Log Dirichlet heat kernel plotted against distance to base cell, for three base cells (columns), times $t = 0.01$ (top), $0.1$, $0.5$, and $1$ (bottom), and torus identifications. (Projective and Klein yield similar plots.)}
	\label{fig:Dirichlet-heat-kernel-scatter-log-torus}
\end{figure}
\let \picturewidth \relax


\section{The Wave Propagator on \textit{IMC}}
\label{sec: wave}

In Figures~\ref{fig:wave-prop-base2000-2000}--\ref{fig:wave-prop-base2000-1111} we show the graphs of the wave propagator~\eqref{eq:wave-propagator} as a function of $x$ for $m = 4$ and three different choices of $y$ with $t = 1, 2, 3, 4$. These are all shown with torus identifications.
\newcommand{\picturewidth}{6cm}
\begin{figure}
	\makebox[\textwidth]{
		\includegraphics[width=\picturewidth, clip=true, trim=0.5cm 0.5cm 0.5cm 0cm]{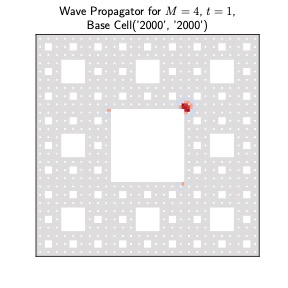}
		\includegraphics[width=\picturewidth, clip=true, trim=0.5cm 0.5cm 0.5cm 0cm]{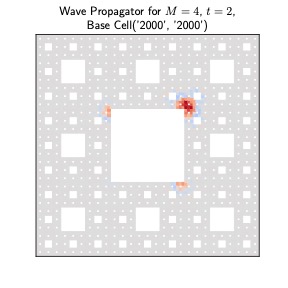}
		\includegraphics[width=\picturewidth, clip=true, trim=0.5cm 0.5cm 0.5cm 0cm]{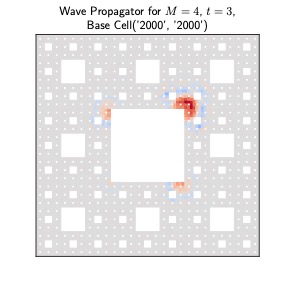}
	}
	
	\makebox[\textwidth]{
		\includegraphics[width=\picturewidth, clip=true, trim=0.5cm 0.5cm 0.5cm 0cm]{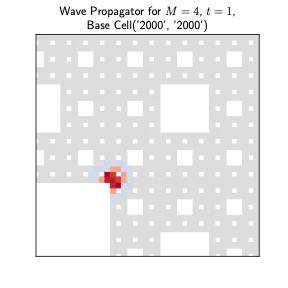}
		\includegraphics[width=\picturewidth, clip=true, trim=0.5cm 0.5cm 0.5cm 0cm]{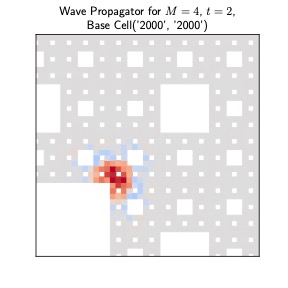}
		\includegraphics[width=\picturewidth, clip=true, trim=0.5cm 0.5cm 0.5cm 0cm]{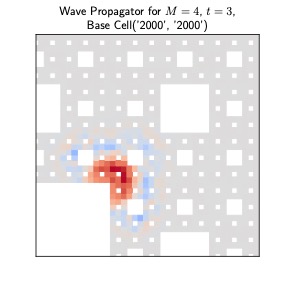}
	}
	
	\includegraphics[width=\picturewidth, clip=true, trim=0.5cm 0.5cm 0.5cm 0cm]{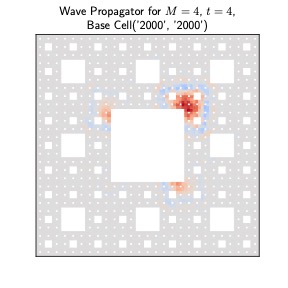}
	\includegraphics[width=\picturewidth, clip=true, trim=0.5cm 0.5cm 0.5cm 0cm]{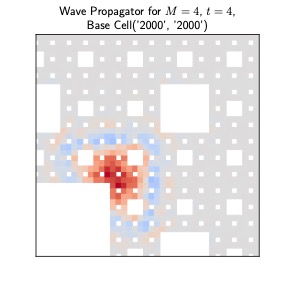}
	\caption{The wave propagator with torus identifications for one base cell is shown at times $t = 1, 2, 3$ (across the top) and $t = 4$ (bottom left). Below $t=1,2,3$ and beside $t=4$ are zoomed pictures showing some of the more prominent behavior.}
	\label{fig:wave-prop-base2000-2000}
\end{figure}
\begin{figure}
	\makebox[\textwidth]{
		\includegraphics[width=\picturewidth, clip=true, trim=0.5cm 0.5cm 0.5cm 0cm]{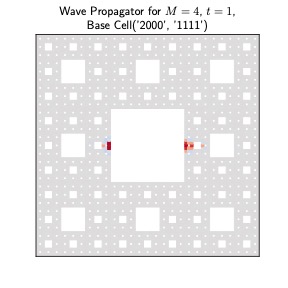}
		\includegraphics[width=\picturewidth, clip=true, trim=0.5cm 0.5cm 0.5cm 0cm]{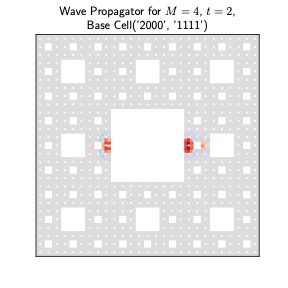}
		\includegraphics[width=\picturewidth, clip=true, trim=0.5cm 0.5cm 0.5cm 0cm]{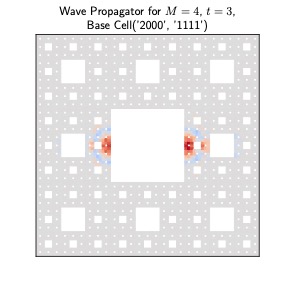}
	}
	
	\makebox[\textwidth]{
		\includegraphics[width=\picturewidth, clip=true, trim=0.5cm 0.5cm 0.5cm 0cm]{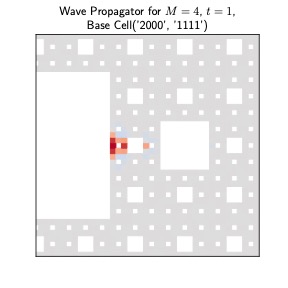}
		\includegraphics[width=\picturewidth, clip=true, trim=0.5cm 0.5cm 0.5cm 0cm]{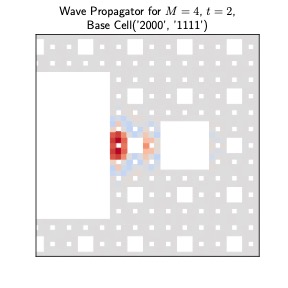}
		\includegraphics[width=\picturewidth, clip=true, trim=0.5cm 0.5cm 0.5cm 0cm]{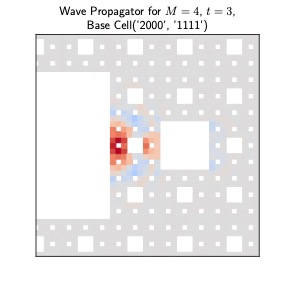}
	}
	
	\includegraphics[width=\picturewidth, clip=true, trim=0.5cm 0.5cm 0.5cm 0cm]{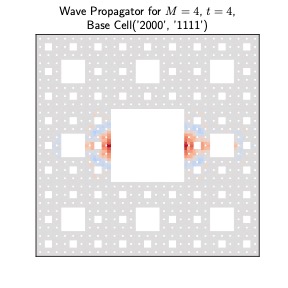}
	\includegraphics[width=\picturewidth, clip=true, trim=0.5cm 0.5cm 0.5cm 0cm]{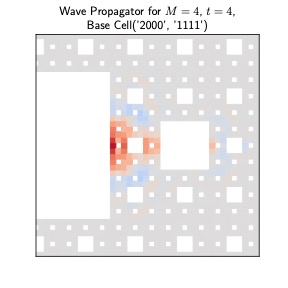}
	\caption{Refer to the description of Figure~\ref{fig:wave-prop-base2000-2000}.}
	\label{fig:wave-prop-base2000-1111}
\end{figure}
\let \picturewidth \relax
Note that we do not expect a finite propagation speed, since we are working on a graph (see also \cite{lee} for non-finite propagation speed on other fractals), but we do expect most of the significant support to be centered at $y$ and to increase with $t$. We clearly see both behaviors. Comparing $t = 1$ with $t = 2$ and $t = 2$ with $t = 4$ we see a qualitative spreading of the size of the significant support, although we do not see how to make this into a quantitative statement.

It appears that the maximum value occurs near $x = y$ but not always at $x = y$, and the propagator appears to be bounded. This is in contrast to the propagator in $\RR^2$ that has a singularity at $|x - y| = t$.

In Figures~\ref{fig:wave-prop-scatters-t1-t4}--\ref{fig:wave-prop-scatters-t5-t8} we show scatter plots of wave propagator values for cells near $x$.

\begin{figure}
	\centering
	\newcommand{\picturewidth}{6cm}
	\makebox[\linewidth]{
		\includegraphics[width=\picturewidth]{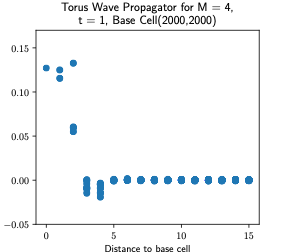}
		\includegraphics[width=\picturewidth]{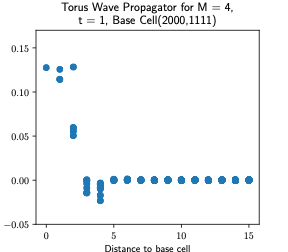}
	}
	\makebox[\linewidth]{
		\includegraphics[width=\picturewidth]{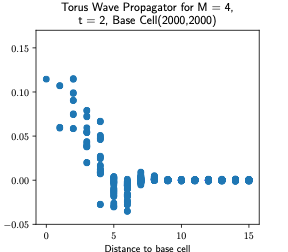}
		\includegraphics[width=\picturewidth]{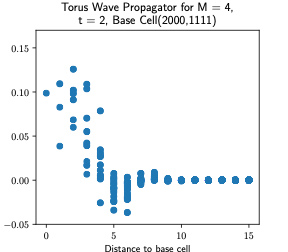}
	}
	\makebox[\linewidth]{
		\includegraphics[width=\picturewidth]{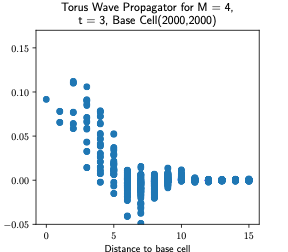}
		\includegraphics[width=\picturewidth]{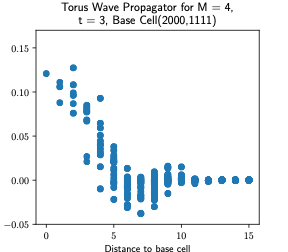}
	}
	\makebox[\linewidth]{
		\includegraphics[width=\picturewidth]{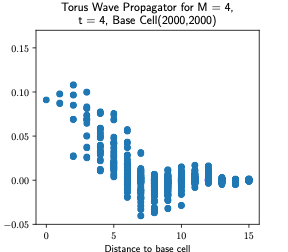}
		\includegraphics[width=\picturewidth]{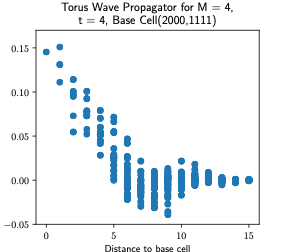}
	}
	\let \picturewidth \relax
	\caption{Scatter plots of the wave propagator. Columns (left to right) correspond to the base cells in Figures~\ref{fig:wave-prop-base2000-2000} and \ref{fig:wave-prop-base2000-1111}. Rows are times $t = 1, 2, 3, 4$.}
	\label{fig:wave-prop-scatters-t1-t4}
\end{figure}
\begin{figure}
	\centering
	\newcommand{\picturewidth}{6cm}
	\makebox[\linewidth]{
		\includegraphics[width=\picturewidth]{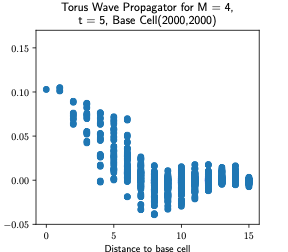}
		\includegraphics[width=\picturewidth]{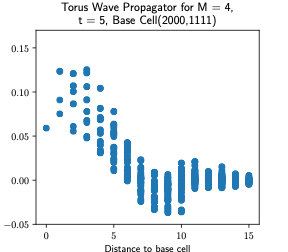}
	}
	\makebox[\linewidth]{
		\includegraphics[width=\picturewidth]{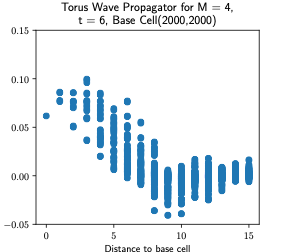}
		\includegraphics[width=\picturewidth]{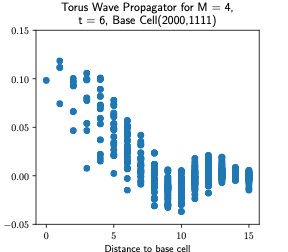}
	}
	\makebox[\linewidth]{
		\includegraphics[width=\picturewidth]{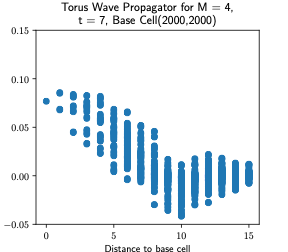}
		\includegraphics[width=\picturewidth]{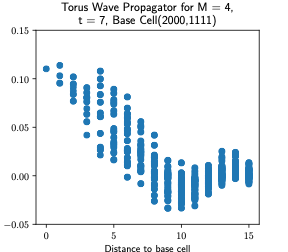}
	}
	\makebox[\linewidth]{
		\includegraphics[width=\picturewidth]{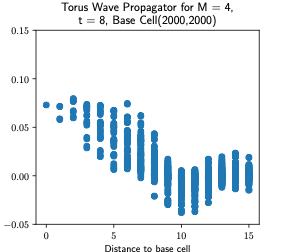}
		\includegraphics[width=\picturewidth]{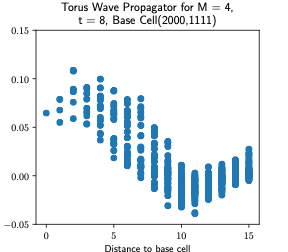}
	}
	\let \picturewidth \relax
	\caption{As in Figure~\ref{fig:wave-prop-scatters-t1-t4}, but with times $t = 5, 6, 7, 8$.}
	\label{fig:wave-prop-scatters-t5-t8}
\end{figure}


\section{Harmonic Functions}
\label{sec: harmonic}

Harmonic functions on $IMC$ are characterized by the property that the value at any cell is equal to the average value on the four neighboring cells. We expect that the space of harmonic functions is infinite-dimensional, so in particular there are nonzero harmonic functions that are close to zero on any of the approximations $\widetilde{MC}_m$. Thus we cannot learn very much about the full space of harmonic functions on $IMC$ by studying harmonic functions on $\widetilde{MC}_m$. Nevertheless, it is interesting to study the analog of the Poisson kernel on $\widetilde{MC}_m$.

For this purpose we define the boundary of $\widetilde{MC}_m$ to be the $4 \left( 3^m - 1 \right)$ cells along the boundary of the square containing $\widetilde{MC}_m$, and everything else forms the interior. We impose the harmonic condition only on interior cells, and prescribe values on the boundary cells. The Poisson kernel $P(x, y)$ is the function that provides the interior values in terms of the boundary values:
\begin{align}
	\label{eq: harmonic interior}
	h(x) = \sum_{y \in \partial \widetilde{MC}_m} P(x, y) h(y).
\end{align}
It follows from general principles that a unique such function exists and is nonnegative. In fact $x \mapsto P(x, y)$ is the unique harmonic function satisfying $P(x, z) = \delta_{xz}$ for $z$ in the boundary. So $P(y, y) = 1$ and $P(x, y)$ is expected to decay as $x$ moves away from $y$, but not as rapidly as the heat kernel.

In Figure~\ref{fig:harmonics-sampling} we show graphs of $x \mapsto P(x, y)$ for a sampling of points $y$, all with torus identifications. We also show scatter plots of the values of $P(x, y)$ as $x$ varies over cells of distance $k$ to $y$.

\begin{figure}
	\makebox[\linewidth]{
		\includegraphics[width=5cm, clip=true, trim=1cm 1cm 0.75cm 0cm]{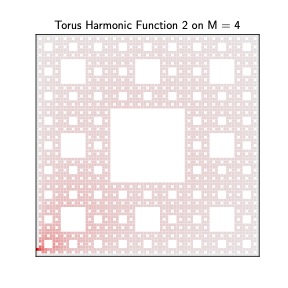}
		\hspace{0.9cm}
		\includegraphics[width=5cm, clip=true, trim=1cm 1cm 0.75cm 0cm]{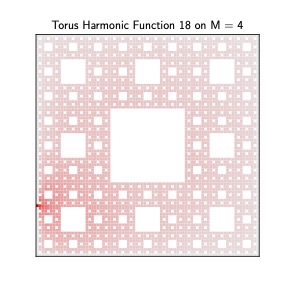}
		\hspace{0.9cm}
		\includegraphics[width=5cm, clip=true, trim=1cm 1cm 0.75cm 0cm]{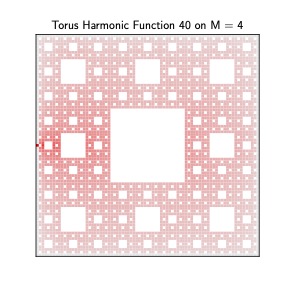}
	}
	\makebox[\linewidth]{
		\includegraphics[width=6cm, clip=true, trim=0.2cm 0.2cm 0.2cm 0.2cm]{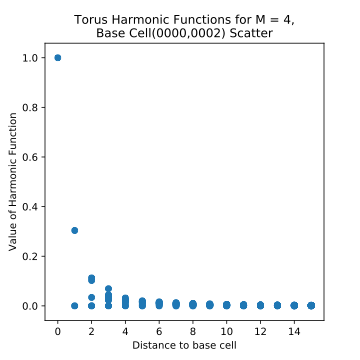}
		\includegraphics[width=6cm, clip=true, trim=0.2cm 0.2cm 0.2cm 0.2cm]{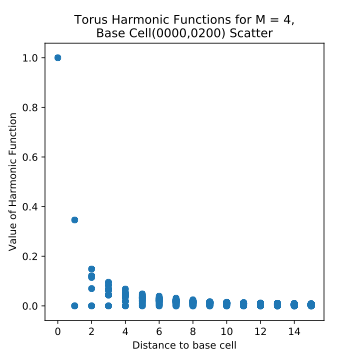}
		\includegraphics[width=6cm, clip=true, trim=0.2cm 0.2cm 0.2cm 0.2cm]{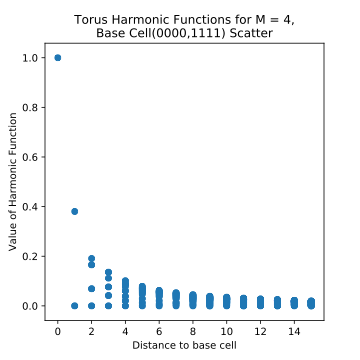}
	}
	\makebox[\linewidth]{
		\includegraphics[width=6cm, clip=true, trim=0cm 0.2cm 0.2cm 0.2cm]{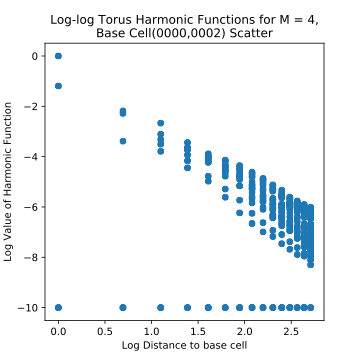}
		\includegraphics[width=6cm, clip=true, trim=0cm 0.2cm 0.2cm 0.2cm]{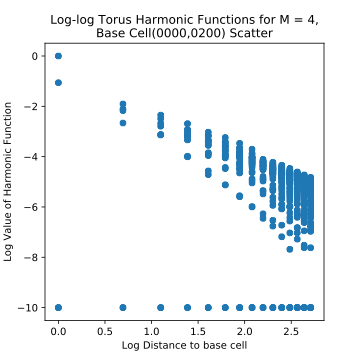}
		\includegraphics[width=6cm, clip=true, trim=0cm 0.2cm 0.2cm 0.2cm]{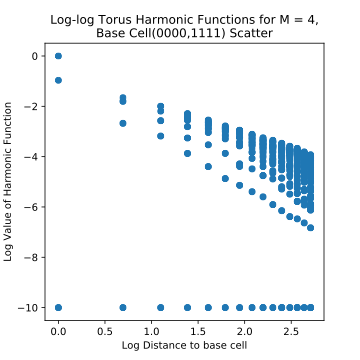}
	}
	\caption{(Top Row) A sampling of $x \mapsto P(x,y)$ on $M=4$, with $y$ chosen to be three cells along the boundary.
	(Middle Row) For the same points $y$ as above, the values of $x \mapsto P(x,y)$ plotted against $d(x, y) \le 15$.
	(Bottom Row) Log-log versions of the plots shown in the middle row. Boundary cells are omitted in these plots.}
	\label{fig:harmonics-sampling}
\end{figure}


\section{Spectrum of the Laplacian on Magic Carpet Fractals}
\label{sec: uniform}

Let $MC_m$ denote the level-$m$ approximation with the outer boundary identified in the same manner as the inner boundaries (so we have three versions: $MCT_m$, $MCK_m$, and $MCP_m$). So $MC_m$ has $8^m$ cells; we assign to each of them measure $1/8^m$, and we take the side lengths to be $1/3^m$, so each $MC_{m+1}$ is a refinement of $MC_m$ in the appropriate sense. In the limit as $m \to \infty$ we obtain a magic carpet fractal $MC$. The Laplacian on $MC_m$ is given by 
\begin{align}
	-\D^{(m)} f(x) = \sum_{y \underset m \sim x} \left(f(x) - f(y) \right).
\end{align} 
It is a symmetric operator with $8^m$ eigenfunctions with eigenvalues in $[0, 8]$ (by Ger\v{s}gorin's circle theorem). We would like to claim the existence of a limiting operator
\begin{align}
	-\D f = \lim_{m \to \infty} R^m \D^{(m)} f
\end{align}
on $MC$, for the appropriate renormalization factor $R$. Then the spectrum of $-\D$ would be the limit of the spectrum of $\D^{(m)}$ multiplied by $R^m$. In fact, there is no published proof of the existence of this limit, but numerical data in this paper and in previous works (\cite{bello-li-strichartz}, \cite{molitor-ott-strichartz} in the torus identification case) leaves little doubt that the limit exists.

By computing the spectra for $m = 2, 3, 4$ and taking ratios we can estimate the renormalization factor $R$. This data is shown in Tables~\ref{table:eigenvalue-ratios-tg}--\ref{table:eigenvalue-ratios-kbhg} for the beginning of the spectrum, and the remainder can be found on the website \cite{website}.
\begin{table}
	\centering
	\makebox[\linewidth]{
		\tt
		\begin{tabular}{| l l l l l l |}
		\hline
		\multicolumn{6}{| c |}{\rm \bf Torus Glued Identifications} \\
		{} & \parbox{2.5cm}{\rm \raggedright Eigenvalue $\lambda_2$ for $M=2$} & \parbox{2.5cm}{\rm \raggedright Eigenvalue $\lambda_3$ for $M=3$} & \parbox{2.5cm}{\rm \raggedright Eigenvalue $\lambda_4$ for $M=4$} & \parbox{2.5cm}{\rm \raggedright Ratio $\lambda_2 / \lambda_3$ }& \parbox{2.5cm}{\rm \raggedright Ratio $\lambda_3 / \lambda_4$} \\[0.25cm] \hline
		0 & \hphantom{** }0.0 & \hphantom{** }0.0 & \hphantom{** }-0.0 & - & - \\
		1 & \hphantom{** }0.4410218 & \hphantom{** }0.0718171 & \hphantom{** }0.0110916 & 6.1408993 & 6.4749219 \\
		2 & ** 0.690591 & ** 0.1119098 & ** 0.0173466 & 6.1709581 & 6.4513906 \\
		3 & ** 0.690591 & ** 0.1119098 & ** 0.0173466 & 6.1709581 & 6.4513906 \\
		4 & \hphantom{** }0.7587998 & \hphantom{** }0.1205024 & \hphantom{** }0.0185273 & 6.2969667 & 6.5040388 \\
		5 & \hphantom{** }1.4269914 & \hphantom{** }0.2334006 & \hphantom{** }0.0359611 & 6.1139159 & 6.4903591 \\
		6 & ** 1.482754 & ** 0.245267 & ** 0.0379436 & 6.0454696 & 6.4639853 \\
		7 & ** 1.482754 & ** 0.245267 & ** 0.0379436 & 6.0454696 & 6.4639853 \\
		8 & \hphantom{** }1.4983881 & \hphantom{** }0.2518546 & \hphantom{** }0.0392556 & 5.949417 & 6.415758 \\
		9 & \hphantom{** }1.5692227 & \hphantom{** }0.2780838 & \hphantom{** }0.0438667 & 5.6429846 & 6.3392857 \\
		10 & \hphantom{** }1.8746245 & \hphantom{** }0.3402416 & \hphantom{** }0.053536 & 5.509687 & 6.3553786 \\
		11 & ** 1.8836369 & ** 0.3455927 & ** 0.0552662 & 5.4504528 & 6.2532401 \\
		12 & ** 1.8836369 & ** 0.3455927 & ** 0.0552662 & 5.4504528 & 6.2532401 \\
		13 & \hphantom{** }2.0 & ** 0.415878 & ** 0.0649221 & 4.8091024 & 6.4058044 \\
		14 & \hphantom{** }2.3293357 & ** 0.415878 & ** 0.0649221 & 5.6010068 & 6.4058044 \\
		15 & ** 2.4155337 & \hphantom{** }0.4189091 & \hphantom{** }0.0658944 & 5.7662475 & 6.3572796 \\
		\hline
		\end{tabular}
	} \\[0.2cm]
	\caption{The beginning eigenvalues with torus glued identifications at levels $m=2,3,4$, and the ratios of these eigenvalues.}
	\label{table:eigenvalue-ratios-tg}
\end{table}
\begin{table}
	\centering
	\makebox[\linewidth]{
		\tt
		\begin{tabular}{| l l l l l l |}
		\hline
		\multicolumn{6}{| c |}{\rm \bf Projective Plane Glued Identifications} \\
		{} & \parbox{2.5cm}{\rm \raggedright Eigenvalue $\lambda_2$ for $M=2$} & \parbox{2.5cm}{\rm \raggedright Eigenvalue $\lambda_3$ for $M=3$} & \parbox{2.5cm}{\rm \raggedright Eigenvalue $\lambda_4$ for $M=4$} & \parbox{2.5cm}{\rm \raggedright Ratio $\lambda_2 / \lambda_3$ }& \parbox{2.5cm}{\rm \raggedright Ratio $\lambda_3 / \lambda_4$} \\[0.25cm] \hline
		0 & \hphantom{** }-0.0 & \hphantom{** }-0.0 & \hphantom{** }-0.0 & - & - \\
		1 & \hphantom{** }0.3058223 & \hphantom{** }0.0477565 & \hphantom{** }0.0074541 & 6.4037878 & 6.406704 \\
		2 & \hphantom{** }0.4410218 & \hphantom{** }0.0729375 & \hphantom{** }0.0115116 & 6.046572 & 6.3360045 \\
		3 & \hphantom{** }0.7587998 & \hphantom{** }0.1233985 & \hphantom{** }0.0194031 & 6.1491843 & 6.3597326 \\
		4 & ** 1.1250751 & ** 0.1858666 & ** 0.0292749 & 6.0531309 & 6.3490018 \\
		5 & ** 1.1250751 & ** 0.1858666 & ** 0.0292749 & 6.0531309 & 6.3490018 \\
		6 & \hphantom{** }1.3324988 & \hphantom{** }0.2338597 & \hphantom{** }0.0373292 & 5.6978556 & 6.2647901 \\
		7 & \hphantom{** }1.3652037 & \hphantom{** }0.2388857 & \hphantom{** }0.0380849 & 5.7148832 & 6.2724489 \\
		8 & \hphantom{** }1.4983881 & \hphantom{** }0.254542 & \hphantom{** }0.0403709 & 5.8866048 & 6.3050832 \\
		9 & \hphantom{** }1.5692227 & \hphantom{** }0.2898892 & \hphantom{** }0.0466163 & 5.4131807 & 6.2186257 \\
		10 & \hphantom{** }1.8746245 & \hphantom{** }0.3058223 & \hphantom{** }0.0477565 & 6.1297834 & 6.4037878 \\
		11 & ** 2.0 & \hphantom{** }0.3422554 & ** 0.0541734 & 5.8435898 & 6.3177806 \\
		12 & ** 2.0 & ** 0.343133 & ** 0.0541734 & 5.8286443 & 6.3339803 \\
		13 & ** 2.0371299 & ** 0.343133 & \hphantom{** }0.0548889 & 5.9368529 & 6.2514145 \\
		14 & ** 2.0371299 & \hphantom{** }0.4275677 & \hphantom{** }0.0690194 & 4.7644621 & 6.1948905 \\
		15 & \hphantom{** }2.1109942 & \hphantom{** }0.430103 & \hphantom{** }0.0697148 & 4.9081133 & 6.1694614 \\
		\hline
		\end{tabular}
	} \\[0.2cm]
	\caption{As in Table~\ref{table:eigenvalue-ratios-tg}, but the identifications here are projective.}
	\label{table:eigenvalue-ratios-ppg}
\end{table}
\begin{table}
	\centering
	\makebox[\linewidth]{
		\tt
		\begin{tabular}{| l l l l l l |}
		\hline
		\multicolumn{6}{| c |}{\rm \bf Klein Bottle Horizontal Glued Identifications} \\
		{} & \parbox{2.5cm}{\rm \raggedright Eigenvalue $\lambda_2$ for $M=2$} & \parbox{2.5cm}{\rm \raggedright Eigenvalue $\lambda_3$ for $M=3$} & \parbox{2.5cm}{\rm \raggedright Eigenvalue $\lambda_4$ for $M=4$} & \parbox{2.5cm}{\rm \raggedright Ratio $\lambda_2 / \lambda_3$ }& \parbox{2.5cm}{\rm \raggedright Ratio $\lambda_3 / \lambda_4$} \\[0.25cm] \hline
		0 & \hphantom{** }0.0 & \hphantom{** }0.0 & \hphantom{** }-0.0 & - & - \\
		1 & \hphantom{** }0.4410218 & \hphantom{** }0.0723638 & \hphantom{** }0.0112916 & 6.0945093 & 6.408612 \\
		2 & \hphantom{** }0.690591 & \hphantom{** }0.1119495 & \hphantom{** }0.0173792 & 6.1687743 & 6.4415609 \\
		3 & \hphantom{** }0.757329 & \hphantom{** }0.121964 & \hphantom{** }0.018978 & 6.2094458 & 6.4266102 \\
		4 & \hphantom{** }0.7587998 & \hphantom{** }0.123382 & \hphantom{** }0.0193997 & 6.1500041 & 6.3599916 \\
		5 & \hphantom{** }1.1250751 & \hphantom{** }0.1852829 & \hphantom{** }0.0290511 & 6.0722031 & 6.3778205 \\
		6 & \hphantom{** }1.482754 & \hphantom{** }0.2465753 & \hphantom{** }0.0384654 & 6.0133933 & 6.410307 \\
		7 & \hphantom{** }1.4983881 & \hphantom{** }0.2530836 & \hphantom{** }0.0398039 & 5.920527 & 6.3582652 \\
		8 & \hphantom{** }1.5692227 & \hphantom{** }0.2844123 & \hphantom{** }0.0453381 & 5.5174219 & 6.2731406 \\
		9 & \hphantom{** }1.8662197 & \hphantom{** }0.3196011 & \hphantom{** }0.0507154 & 5.8392155 & 6.3018541 \\
		10 & \hphantom{** }1.8746245 & \hphantom{** }0.3415234 & \hphantom{** }0.0535803 & 5.4890069 & 6.3740525 \\
		11 & \hphantom{** }1.8836369 & \hphantom{** }0.3429855 & \hphantom{** }0.054342 & 5.491885 & 6.3116066 \\
		12 & \hphantom{** }1.9260699 & \hphantom{** }0.3463405 & \hphantom{** }0.055662 & 5.5612036 & 6.2222017 \\
		13 & \hphantom{** }2.0 & \hphantom{** }0.356565 & \hphantom{** }0.0576952 & 5.6090762 & 6.1801455 \\
		14 & \hphantom{** }2.0371299 & \hphantom{** }0.4067644 & \hphantom{** }0.0640321 & 5.0081327 & 6.3525108 \\
		15 & \hphantom{** }2.3293357 & \hphantom{** }0.4245981 & \hphantom{** }0.0675213 & 5.4859772 & 6.2883567 \\
		\hline
		\end{tabular}
	} \\[0.2cm]
	\caption{As in Table~\ref{table:eigenvalue-ratios-tg}, but the identifications here are Klein bottle horizontal.}
	\label{table:eigenvalue-ratios-kbhg}
\end{table}
We notice that the ratios decrease as you move farther up the spectrum, and we believe that computational error degrades the results as the eigenvalues increase, so we take the average of the first ten ratios $\lambda_3/\lambda_4$ from each table to estimate
\begin{align}
	R_T \approx 6.441049, 
	\qquad
	R_K \approx 6.373221, 
	\qqand
	R_P \approx 6.326518. 
\end{align}
These are close but not equal. Note that some of the eigenvalues for the torus and projective identifications have multiplicity two. This is easily explained by the fact that $MCT$ and $MCP$ have a dihedral $D_4$ group of symmetries, and $D_4$ has a two-dimensional irreducible representation. The symmetry group of $MCK$ is $\ZZ_2 \times \ZZ_2$, which is abelian, so it only has one-dimensional irreducible representations. As can be seen in Tables~\ref{table:eigenvalue-ratios-tg} and~\ref{table:eigenvalue-ratios-ppg}, the location in the spectrum of the multiplicity two eigenvalues agrees from one level $m$ to the next only in the bottom portion of the spectrum, so the use of ratios is only meaningful below these points.

In Figure~\ref{fig:glued-eigenvalue-counting-functions} we show the graphs of the eigenvalue counting functions and in Figure~\ref{fig:weyl-standard-glued} the Weyl ratio (log-log).
\begin{figure}
	\newcommand{\picturewidth}{6cm}
	\centering
	\makebox[\linewidth]{
		\includegraphics[width=\picturewidth, clip=true, trim=0cm 0cm 1cm 0cm]{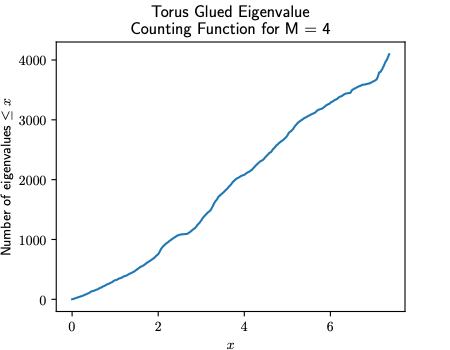}
		\includegraphics[width=\picturewidth, clip=true, trim=0cm 0cm 1cm 0cm]{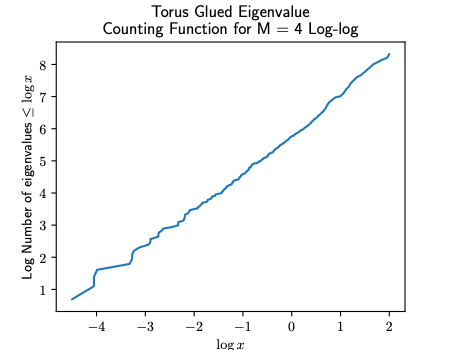}
	}
	\makebox[\linewidth]{
		\includegraphics[width=\picturewidth, clip=true, trim=0cm 0cm 1cm 0cm]{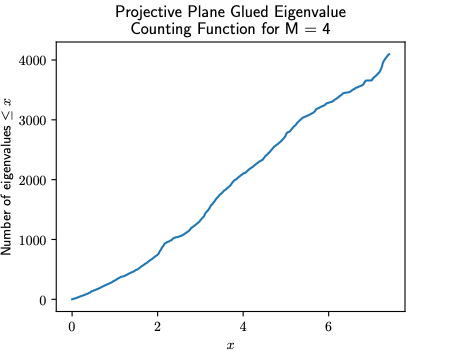}
		\includegraphics[width=\picturewidth, clip=true, trim=0cm 0cm 1cm 0cm]{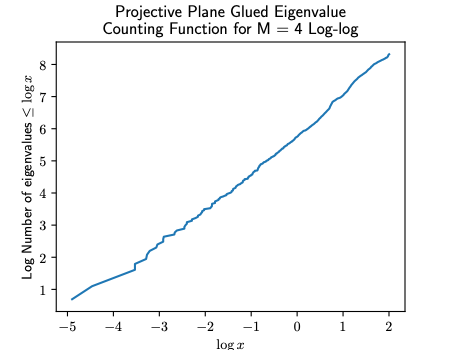}
	}
	\makebox[\linewidth]{
		\includegraphics[width=\picturewidth, clip=true, trim=0cm 0cm 1cm 0cm]{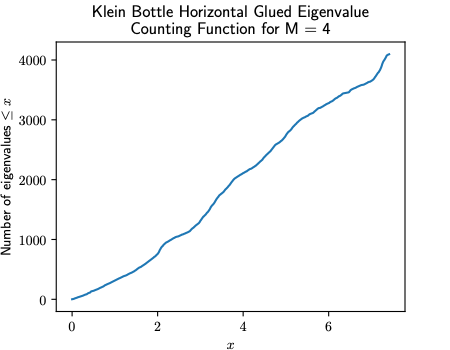}
		\includegraphics[width=\picturewidth, clip=true, trim=0cm 0cm 1cm 0cm]{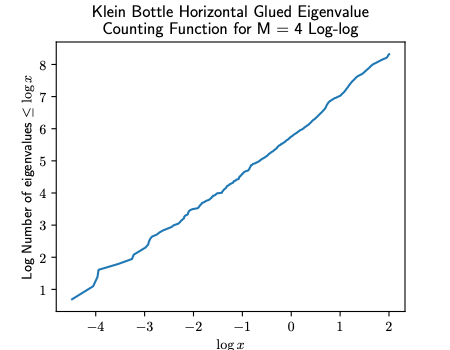}
	}
	\let \picturewidth \relax
	\caption{Eigenvalue counting functions (left) and their log-log counterparts (right).}
	\label{fig:glued-eigenvalue-counting-functions}
\end{figure}
We observe that the different identification types produce qualitatively different Weyl ratios, but for large values of $t$ they are very similar because of the fact that the $m=4$ approximation loses accuracy. We also observe that the Weyl ratios are not multiplicatively periodic. This will have implications in the next section.

\begin{figure}
	\centering
	\makebox[\linewidth]{\includegraphics[width=12cm, clip=true, trim=0.75cm 0cm 1.5cm 0cm]{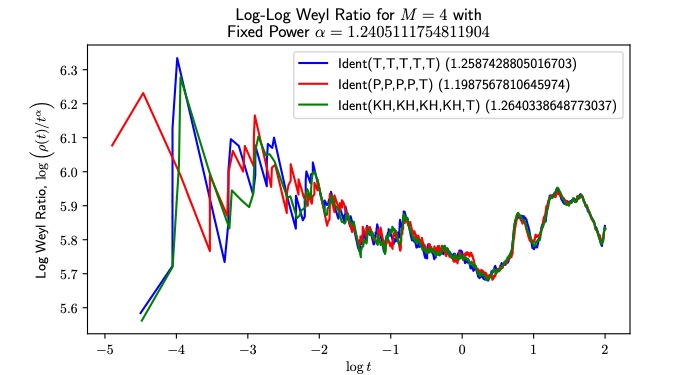}}
	\caption{Weyl plots for torus (blue), projective (red), and Klein (green) identifications.}
	\label{fig:weyl-standard-glued}
\end{figure}

In Figure~\ref{fig:sample-eigenfunctions-tg-ppg-khg} we show a sampling of graphs of eigenfunctions for $m=4$. One interesting phenomenon that we observe among levels is a miniaturization of eigenfunctions, as in Figure~\ref{fig:miniaturization-tg}. Thus every eigenfunction on level $m-1$ reappears at level $m$ repeated 8 times on each of the smaller subsquares, with the same eigenvalue, and of course this iterates. This happens for all three identifications. The general procedure is illustrated in Figure~\ref{fig:tiling-styles}.
\begin{figure}
	\newcommand{\picturewidth}{6cm}
	\centering
	\makebox[\linewidth]{
		\includegraphics[width=\picturewidth, clip=true, trim=0.5cm 1cm 0.5cm 0cm]{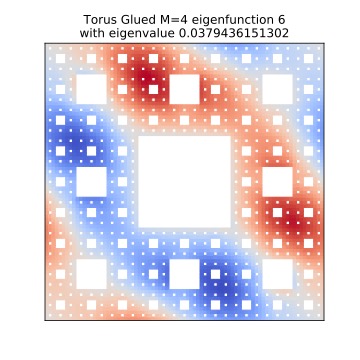}
		\includegraphics[width=\picturewidth, clip=true, trim=0.5cm 1cm 0.5cm 0cm]{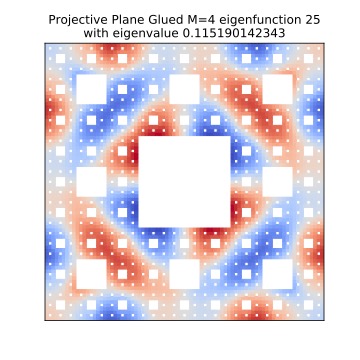}
		\includegraphics[width=\picturewidth, clip=true, trim=0.5cm 1cm 0.5cm 0cm]{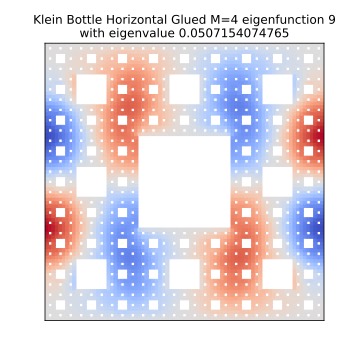}
	}
	\makebox[\linewidth]{
		\includegraphics[width=\picturewidth, clip=true, trim=0.5cm 1cm 0.5cm 0cm]{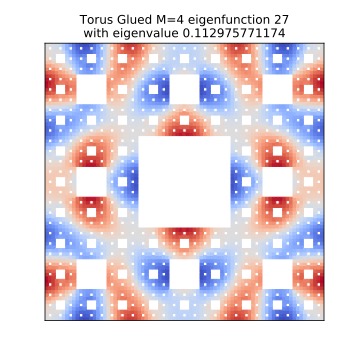}
		\includegraphics[width=\picturewidth, clip=true, trim=0.5cm 1cm 0.5cm 0cm]{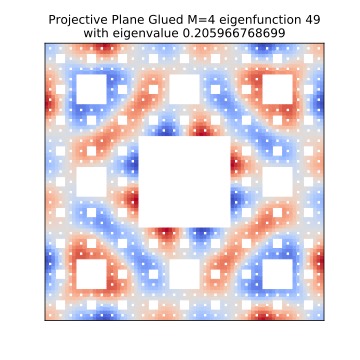}
		\includegraphics[width=\picturewidth, clip=true, trim=0.5cm 1cm 0.5cm 0cm]{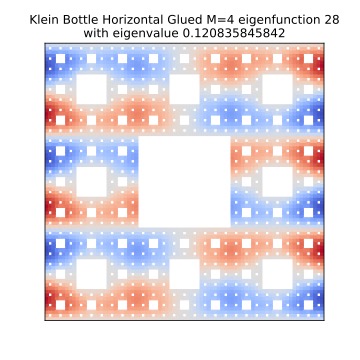}
	}
	\let \picturewidth \relax
	\caption{Sample torus glued (left), projective glued (middle), and Klein glued (right) eigenfunctions at level $m=4$.}
	\label{fig:sample-eigenfunctions-tg-ppg-khg}
\end{figure}
\begin{figure}
	\newcommand{\picturewidth}{6cm}
	\centering
	\makebox[\linewidth]{
		\includegraphics[width=\picturewidth, clip=true, trim=0.5cm 1cm 0.5cm 0cm]{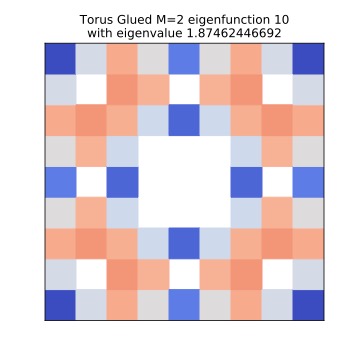}
		\includegraphics[width=\picturewidth, clip=true, trim=0.5cm 1cm 0.5cm 0cm]{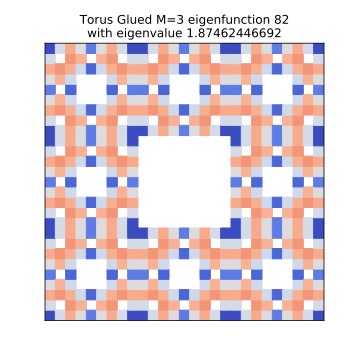}
		\includegraphics[width=\picturewidth, clip=true, trim=0.5cm 1cm 0.5cm 0cm]{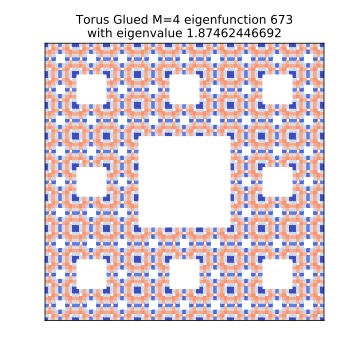}
	}
	\makebox[\linewidth]{
		\includegraphics[width=\picturewidth, clip=true, trim=0.5cm 1cm 0.5cm 0cm]{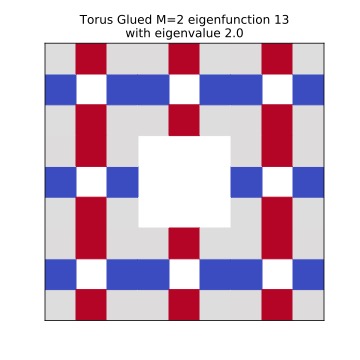}
		\includegraphics[width=\picturewidth, clip=true, trim=0.5cm 1cm 0.5cm 0cm]{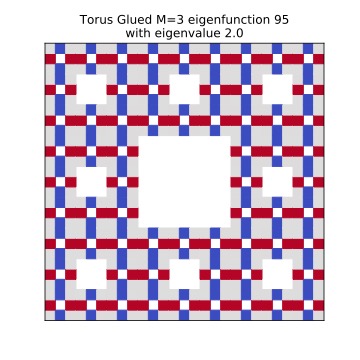}
		\includegraphics[width=\picturewidth, clip=true, trim=0.5cm 1cm 0.5cm 0cm]{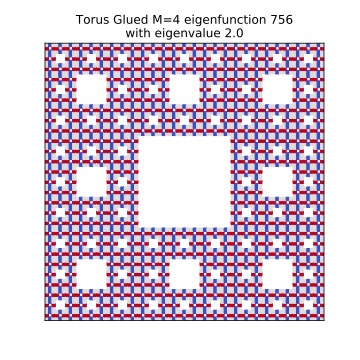}
	}
	\let \picturewidth \relax
	\caption{Two samples of miniaturization with torus glued identifications across levels $m=2,3,4$.}
	\label{fig:miniaturization-tg}
\end{figure}
\begin{figure}
	\centering
	\includegraphics[width=3cm]{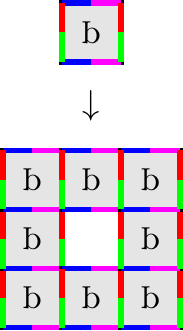}
	\hspace{1cm}
	\includegraphics[width=3cm]{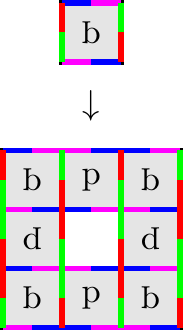}
	\hspace{1cm}
	\includegraphics[width=3cm]{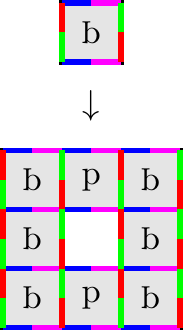}
	\caption{An eigenfunction on $V_m$ is tiled to obtain an eigenfunction on $V_{m+1}$, either for torus (left), projective (middle), or Klein horizontal (right) identifications.}
	\label{fig:tiling-styles}
\end{figure}
We can turn this observation around to attempt to describe bounded periodic eigenfunctions on $IMC$. Take any eigenfunction of $-\D^{(m)}$ on $MC_m$ and duplicate it on every copy of $MC_m$ in $IMC$. This produces an exact eigenfunction on $IMC$ that is bounded and periodic. We will use these in Section~\ref{sec: spectral resolution} to attempt to describe the spectral resolution on $IMC$.

In order to understand the relationship between the eigenfunctions of $-\D^{(m)}$ for different values of $m$ that should be regarded as refinements, we use the following reverse comparison by an averaging method. Start with an eigenfunction $u$ on $MC_m$, so $-\D^{(m)} u = \lambda^{(m)} u$. Now produce a function $\bar u$ on $MC_{m-1}$ by assigning to a cell $x$ in $MC_{m-1}$ the average value of $u$ on the eight cells in $MC_m$ that comprise $x$. Compute $-\D^{(m-1)} \bar u$ and look for a value $\lambda^{(m-1)}$ that is an eigenvalue of $-\D^{(m-1)}$ and such that $-\D^{(m-1)} \bar u \approx \lambda^{(m-1)} \bar u$. Then compare $\bar u$ to its projection $\operatorname{proj} \bar u$ on the $\lambda^{(m-1)}$-eigenspace. We consider $u$ a refinement if $\operatorname{proj} \bar u$ is close to $\bar u$. In many instances it is, as shown in Figure~\ref{fig:refinement-of-eigenfunctions-tg-and-pg}. However, this cannot always be the case, simply because there are many more eigenvalues on level $m$ than on level $m-1$. We do not see evidence of the spectral decimation property as established on the Sierpinski gasket in \cite{Fukushima-Shima}.

\begin{figure}
	\centering
	\newcommand{\picturewidth}{4.2cm}
	\makebox[\linewidth]{
		\includegraphics[width=\picturewidth, clip=true, trim=1cm 1cm 1cm 0cm]{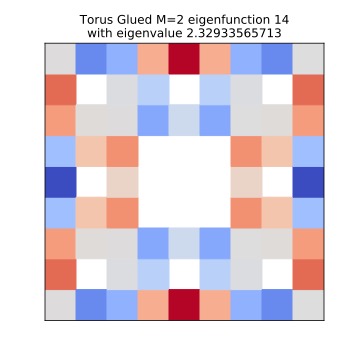}
		\includegraphics[width=\picturewidth, clip=true, trim=1cm 1cm 1cm 0cm]{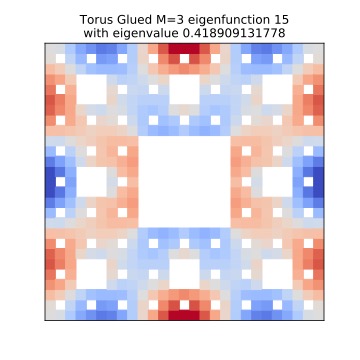}
		\includegraphics[width=\picturewidth, clip=true, trim=1cm 1cm 1cm 0cm]{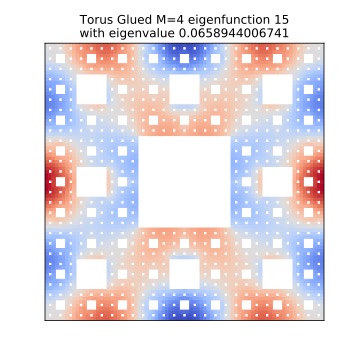}
	}
	\makebox[\linewidth]{
		\includegraphics[width=\picturewidth, clip=true, trim=1cm 1cm 1cm 0cm]{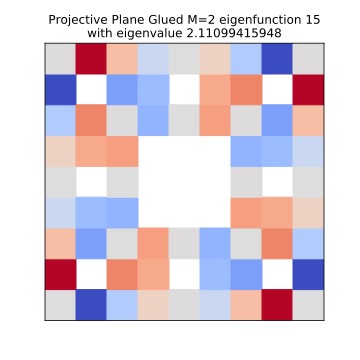}
		\includegraphics[width=\picturewidth, clip=true, trim=1cm 1cm 1cm 0cm]{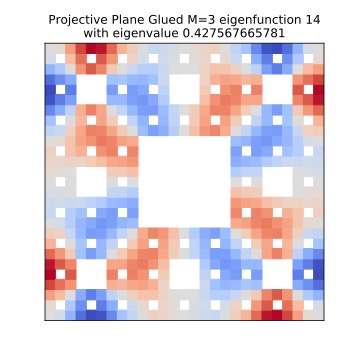}
		\includegraphics[width=\picturewidth, clip=true, trim=1cm 1cm 1cm 0cm]{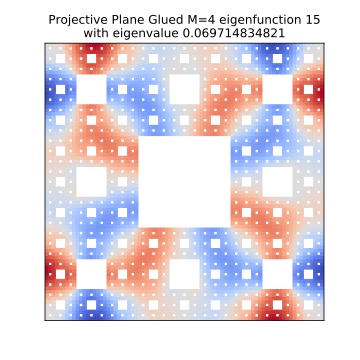}
	}
	\let \picturewidth \relax
	\caption{The top row shows an eigenfunction refining on $MCT$ from level~2~(left) to levels~3~(middle) and~4~(right, negated). The bottom row is similar, but on $MCP$.}
	\label{fig:refinement-of-eigenfunctions-tg-and-pg}
\end{figure}
\newcommand{\picturewidth}{1.1\linewidth}
\begin{figure}
	\centering
	\makebox[\linewidth]{\includegraphics[width=\picturewidth, clip=true, trim=2cm 0cm 2cm 0cm]{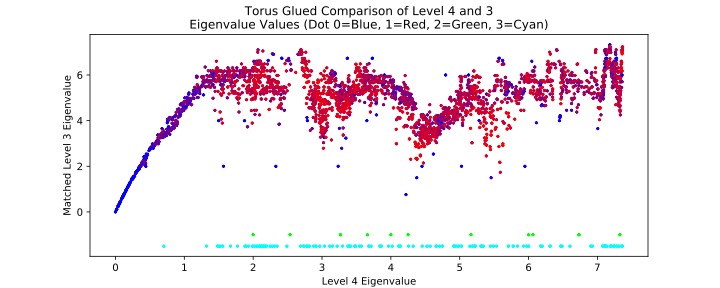}}
	\caption{Matched eigenvalues from our refinement testing on levels $m = 3, 4$ with torus glued identifications. (Projective and Klein identifications yield similar plots.) Color indicates result quality: continuously from dark blue (refines well) to red (poor match); green ($\bar u = 0$); or cyan ($\operatorname{proj} \bar u = 0$). Since green and cyan dots do not pair with level~3 eigenvalues, they are placed artificially along the bottom.}
	\label{fig:eigenfunction-refinement-eigenvalue-matching-4-3}
\end{figure}
\let \picturewidth \relax


\section{Homogeneous Identifications}
\label{sec: homogeneous}

Rather than use the same type of identification at all levels in constructing $MC_m$, we may vary the type from level to level. For example
\[ MC_4(T, P, K_H, K_V, T) \]
means we do torus identifications on the outer boundary, projective identifications on the one large vacant square, horizontal Klein bottle identifications on the eight next largest vacant squares, and so on. At the lowest level of a cell-graph approximation two cells are neighbors independent of identification type. Unlike higher level gaps, the orientation of edges does not influence the fact that the cells are neighbors. We can observe this clearly on $MC_1$, with fixed outer identification, where any identification type for the interior, vacant square yields the same cell graph. Hence the final identification type on the smallest vacant squares does not affect the spectrum at this level, but of course if we think of this as an approximation to a magic carpet fractal, this identification will play a role in the later approximations. The idea for looking at these is inspired by the work of Hambly \cite{hambly92} on Sierpinski-gasket-type fractals, and the followup in \cite{drenning-strichartz}. We call these \defnterm{homogeneous} because we use the same identification type across the board on each level. We could also consider the more general situation where every identification is allowed for each vacant square, as in \cite{hambly97, hambly00}, but we do not expect to see any structure in the spectrum with such choices.

For $m=4$ there are $4^4 = 256$ choices for identification types, although some interchanges of $K_H$ and $K_V$ will yield the same spectrum. To keep things manageable we mainly concentrate on torus and projective identifications, which reduces the total number to sixteen. In Figure~\ref{fig:weyl-TP-to-start} we show the simultaneous graphs of eight Weyl ratios for all identification types that begin with $T$ (respectively, $P$). In Figure~\ref{fig:weyl-TP-to-start-zoomed} we show a zoom of these graphs to the beginning interval~$[-5,-2]$. In Figures~\ref{fig:weyl-TP-grouping-TT}--\ref{fig:weyl-TP-grouping-PP} we show the simultaneous graphs of four Weyl ratios with the same first two identification types, again with zooms to the interval $[-5, -2]$. In Figures~\ref{fig:weyl-TP-grouping-TTT}--\ref{fig:weyl-TP-grouping-PPP} we show simultaneous graphs of two Weyl ratios with the same first three identification types, with zooms to the interval $[-5, -2]$.

\newcommand{\WeylPictureWidth}{9cm}
\newcommand{\WeylPlot}[1]{\includegraphics[width=\WeylPictureWidth, clip=true, trim=0.75cm 0cm 1.5cm 0cm]{#1}}

\begin{figure}
	\centering
	\makebox[\linewidth]{
		\WeylPlot{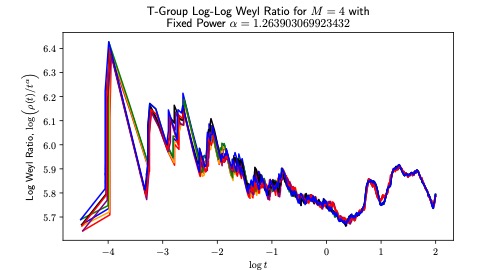}
		\WeylPlot{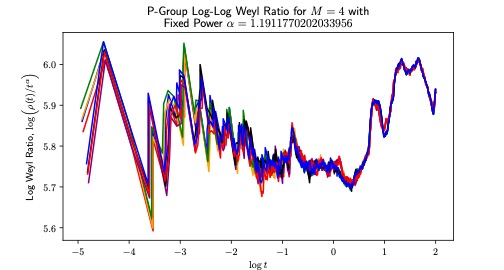}
	}
	\caption{Weyl ratios for identifications beginning with torus (left) or projective (right) identifications.}
	\label{fig:weyl-TP-to-start}

	\makebox[\linewidth]{
		\WeylPlot{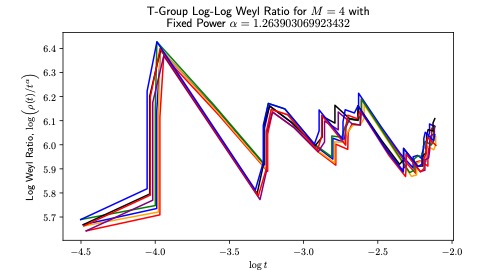}
		\WeylPlot{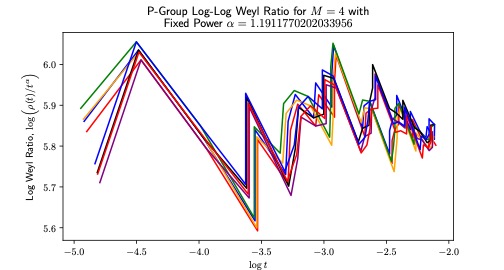}
	}
	\caption{Zoomed version of Figure~\ref{fig:weyl-TP-to-start}.}
	\label{fig:weyl-TP-to-start-zoomed}
\end{figure}


\begin{figure}
	\centering
	\makebox[\linewidth]{
		\WeylPlot{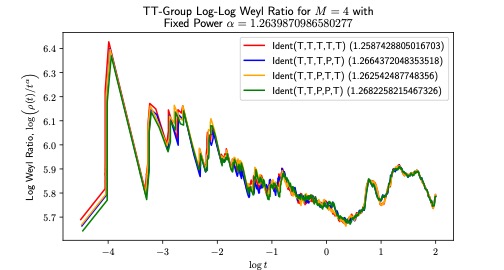}
		\WeylPlot{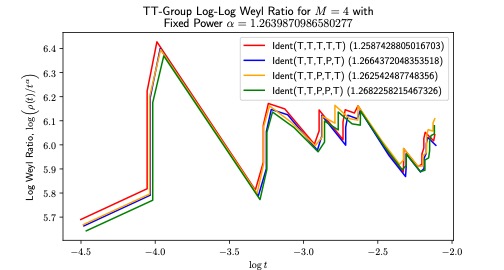}
	}
	\caption{Weyl plots for identifications beginning $T, T, \ldots$ (left) and a zoom (right).}
	\label{fig:weyl-TP-grouping-TT}
\end{figure}

\begin{figure}
	\centering
	\makebox[\linewidth]{
		\WeylPlot{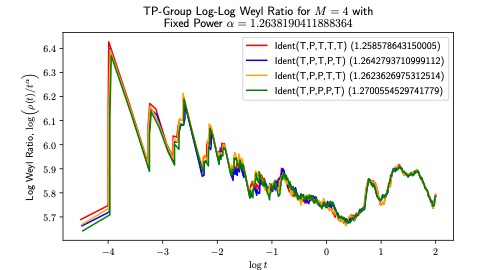}
		\WeylPlot{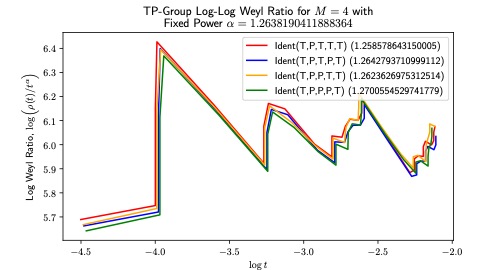}
	}
	\caption{Weyl plots for identifications beginning $T, P, \ldots$ (left) and a zoom (right).}
	\label{fig:weyl-TP-grouping-TP}
\end{figure}

\begin{figure}
	\centering
	\makebox[\linewidth]{
		\WeylPlot{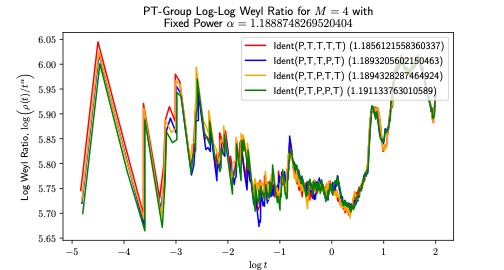}
		\WeylPlot{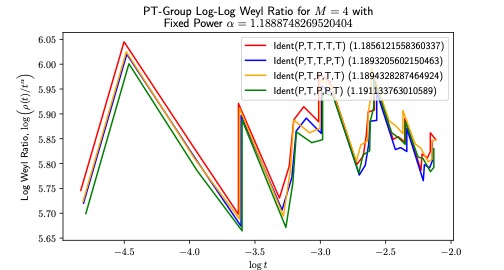}
	}
	\caption{Weyl plots for identifications beginning $P, T, \ldots$ (left) and a zoom (right).}
	\label{fig:weyl-TP-grouping-PT}
\end{figure}

\begin{figure}
	\centering
	\makebox[\linewidth]{
		\WeylPlot{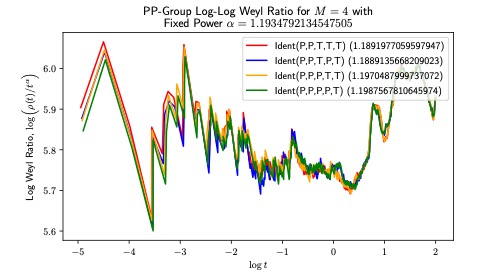}
		\WeylPlot{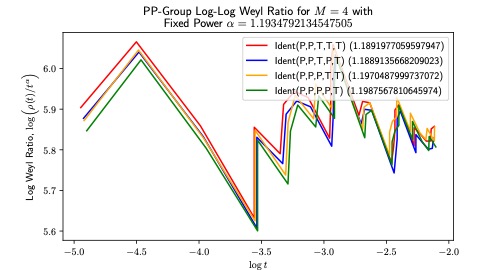}
	}
	\caption{Weyl plots for identifications beginning $P, P, \ldots$ (left) and a zoom (right).}
	\label{fig:weyl-TP-grouping-PP}
\end{figure}

\begin{figure}
	\centering
	\makebox[\linewidth]{
		\WeylPlot{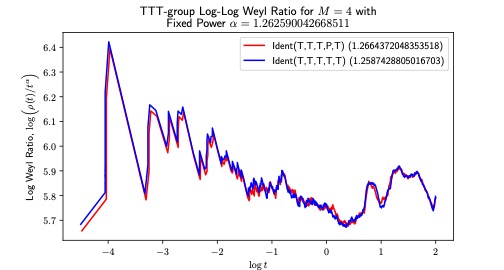}
		\WeylPlot{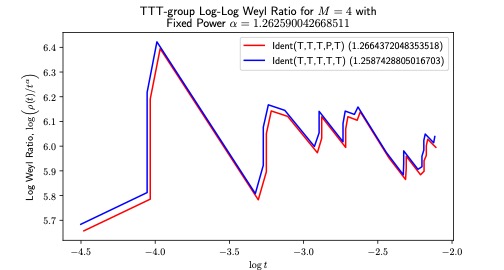}
	}
	\caption{Weyl plots for identifications beginning $T, T, T, \ldots$ (left) and a zoom (right).}
	\label{fig:weyl-TP-grouping-TTT}
\end{figure}

\begin{figure}
	\centering
	\makebox[\linewidth]{
		\WeylPlot{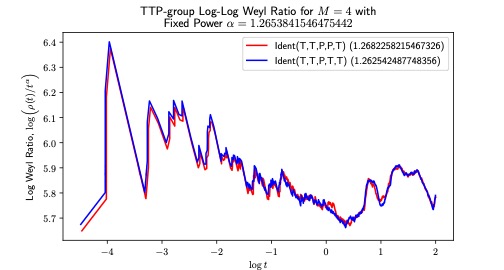}
		\WeylPlot{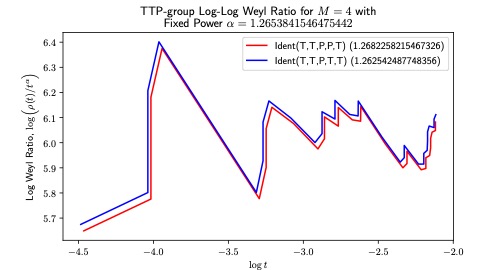}
	}
	\caption{Weyl plots for identifications beginning $T, T, P, \ldots$ (left) and a zoom (right).}
\end{figure}

\begin{figure}
	\centering
	\makebox[\linewidth]{
		\WeylPlot{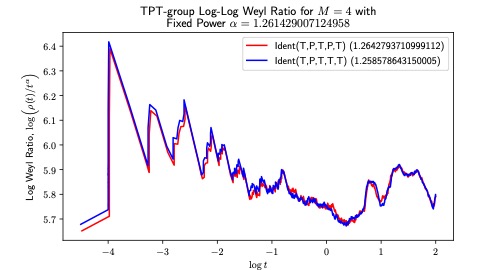}
		\WeylPlot{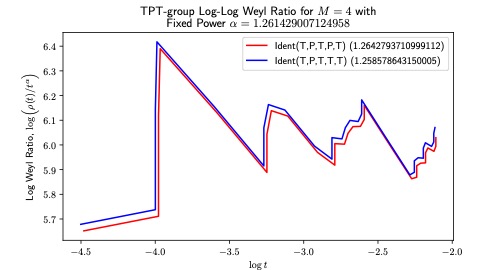}
	}
	\caption{Weyl plots for identifications beginning $T, P, T, \ldots$ (left) and a zoom (right).}
\end{figure}

\begin{figure}
	\centering
	\makebox[\linewidth]{
		\WeylPlot{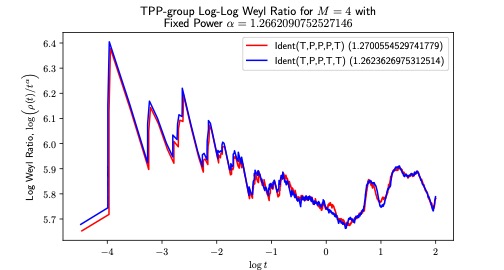}
		\WeylPlot{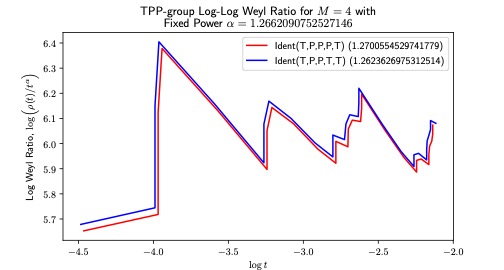}
	}
	\caption{Weyl plots for identifications beginning $T, P, P \ldots$ (left) and a zoom (right).}
\end{figure}

\begin{figure}
	\centering
	\makebox[\linewidth]{
		\WeylPlot{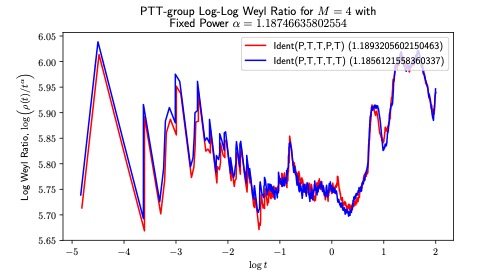}
		\WeylPlot{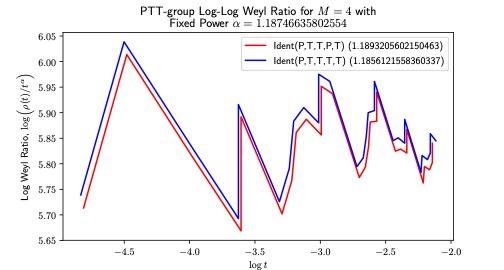}
	}
	\caption{Weyl plots for identifications beginning $P, T, T, \ldots$ (left) and a zoom (right).}
\end{figure}

\begin{figure}
	\centering
	\makebox[\linewidth]{
		\WeylPlot{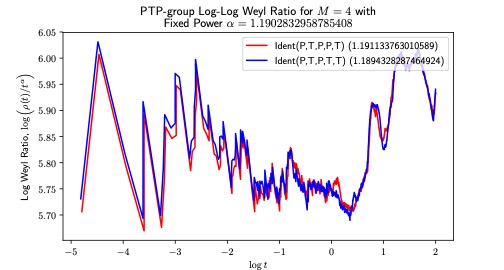}
		\WeylPlot{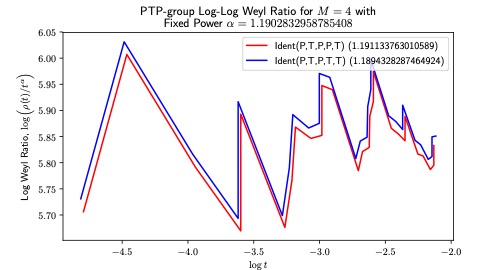}
	}
	\caption{Weyl plots for identifications beginning $P, T, P, \ldots$ (left) and a zoom (right).}
\end{figure}

\begin{figure}
	\centering
	\makebox[\linewidth]{
		\WeylPlot{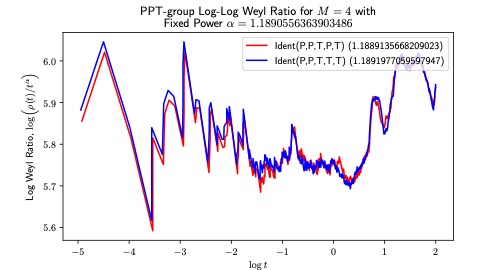}
		\WeylPlot{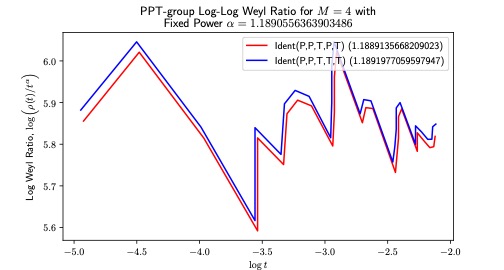}
	}
	\caption{Weyl plots for identifications beginning $P, P, T, \ldots$ (left) and a zoom (right).}
\end{figure}

\begin{figure}
	\centering
	\makebox[\linewidth]{
		\WeylPlot{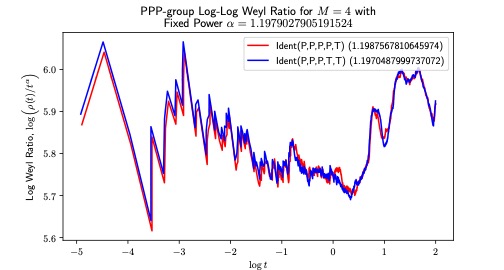}
		\WeylPlot{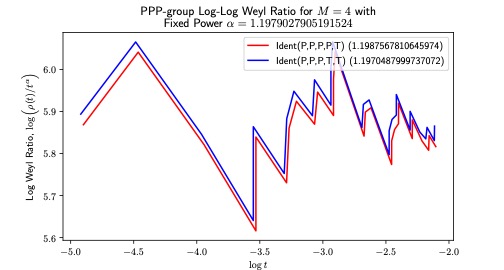}
	}
	\caption{Weyl plots for identifications beginning $P, P, P, \ldots$ (left) and a zoom (right).}
	\label{fig:weyl-TP-grouping-PPP}
\end{figure}

\let \WeylPlot \relax

A general principle called \defnterm{spectral segmentation} was introduced in \cite{drenning-strichartz} to the effect that it is possible to segment the spectrum of a fractal Laplacian so that each segment corresponds to the geometry at a certain scale. For the example studied in \cite{drenning-strichartz}, this effect, although only qualitative, was immediately apparent visually. What made the situation so clear was the fact that the Weyl ratios for the underlying Sierpinski gaskets were asymptotically multiplicatively periodic. That is not the case here, as mentioned in Section~\ref{sec: uniform}.

What the graphical evidence shown in the figures is supposed to suggest is a weak form of this principle: if two identification sequences agree in the first $k$ places, then the Weyl ratios are qualitatively the same for $\log t \le a_k$, where $a_k$ increases with $k$.


\section{The Spectral Resolution on \textit{IMC}}
\label{sec: spectral resolution}

We can use the periodic eigenfunctions discussed in Section~\ref{sec: uniform} to attempt to describe the spectral resolution on $IMC$. For each interval $[a, b)$ we need to construct a projection operator $P_{[a, b)}$ on $\ell^2(IMC)$ that is additive, and so that
\begin{align}
	\label{eq: f is lim of P f}
	f = \lim_{b \to \infty} P_{[0, b)} f \\
	\intertext{and}
	\label{eq: Df is limit of integral}
	-\D f = \int_0^\infty \lambda \, dP_{[0, \lambda)} f.
\end{align}
These identities should hold for all $f \in \ell^2(IMC)$ but it suffices to verify them for $f$ having compact support.

Let $\{ u_k^{(m)} \}$ denote an orthonormal basis of eigenfunctions on $\widetilde{MC}_m$, so
\begin{align}
	-\D^{(m)} u_k^{(m)} = \lambda_k^{(m)} u_k^{(m)}
\end{align}
and let $\tilde u_k^{(m)}$ denote the periodic extension to $IMC$ as illustrated in Figure~\ref{fig:tiling-styles}. Suppose $m$ is large enough that the support of $f$ is contained in the interior of $\widetilde{MC}_m$. Then
\begin{align}
	\label{eq: decomposition of f on MCm}
	\sum_k \langle f, \tilde u_k^{(m)} \rangle \tilde u_k^{(m)} = f \qquad \text{on } \widetilde{MC}_m ,
\end{align}
where $\langle f, \tilde u_k^{(m)} \rangle$ denotes $\sum_{x \in \supp f} f(x) \tilde u_k^{(m)} (x)$, and
\begin{align}
	\label{eq: decomposition of Df on MCm}
	\sum_k \langle f, \tilde u_k^{(m)} \rangle \lambda_k \tilde u_k^{(m)} = - \D f \qquad \text{on } \widetilde{MC}_m.
\end{align}
So we define
\begin{align}
	P_{[a, b)}^{(m)} f = \sum_{\lambda_k^{(m)} \in [a, b)} \langle f, \tilde u_k^{(m)} \rangle \tilde u_k^{(m)} ,
\end{align}
and we conjecture that the following limit exists:
\begin{align}
	P_{[a, b)} f = \lim_{m \to \infty} P_{[a, b)}^{(m)} f.
\end{align}
If so, then~\eqref{eq: f is lim of P f} follows from~\eqref{eq: decomposition of f on MCm} and~\eqref{eq: Df is limit of integral} follows from~\eqref{eq: decomposition of Df on MCm}.


\section{Acknowledgements}

E. Goodman was supported by the Haverford College Koshland Integrated Natural Sciences Center. CY Siu was supported by the Professor Charles K. Kao Research Exchange Scholarship 2015/16.

The numerical computations in this paper were done using the Python Programming Language (\url{http://www.python.org/}) \cite{python}, particularly with the packages NumPy (\url{http://www.numpy.org/}) \cite{numpy} and SciPy (\url{http://www.scipy.org/}) \cite{scipy}. Many of the figures were generated with the Matplotlib package (\url{http://www.matplotlib.org/}) \cite{Hunter:2007}.


\bibliographystyle{amsalpha}
\bibliography{bib}
\end{document}